\newcommand*{\definiere}{\mathrel{\mathop:}=}
\newcommand{\cyclic}{\mathop{\kern0.9ex{{+}\kern-2.10ex\raise-0.20
      ex\hbox{\Large\hbox{$\circlearrowright$}}}}\limits}
\newcommand{\acts}{\mbox{ \raisebox{0.26ex}{\tiny{$\bullet$}} }}
\newtheoremstyle{daniel}{3.0mm}{2mm}{\itshape}{}{\bfseries}{.}{1.5mm}{}
\theoremstyle{daniel}
\newtheorem{thm}{Theorem}[section]
\newtheorem{prop}[thm]{Proposition}
\newtheorem{Defi}[thm]{Definition}
\newtheorem{lemma}[thm]{Lemma}
\newtheorem{cor}[thm]{Corollary}
\newtheorem{Exs}[thm]{Examples}
\newtheorem{Rems}[thm]{Remarks}
\newtheorem*{thm*}{Theorem}
\newtheorem*{cor*}{Corollary}
\newtheorem*{thm3.6}{Theorem 3.6}
\newtheorem*{thm4.3}{Theorem 4.3}
\newtheorem*{mainthm}{Main Theorem}
\newtheorem*{prop*}{Proposition}
\newtheorem*{Notation}{Notation}
\newtheorem{propnot}[thm]{Proposition and Notation}
\newtheorem{Lem}[thm]{Lemma}
\newtheorem{Cor}[thm]{Corollary}
\newtheorem{Prop}[thm]{Proposition}
\newtheorem{Def}[thm]{Definition}
\newtheorem{Not}[thm]{Notation}
\newtheorem{Rem}[thm]{Remark}
\newtheorem{Ex}[thm]{Example}
\newtheorem*{Setup}{Setup}
\newenvironment{rem}   {\begin{Rem}\em}{\end{Rem}}
\newenvironment{rems}   {\begin{Rems}\em}{\end{Rems}}
\newenvironment{defi}  {\begin{Defi}\em}{\end{Defi}}
\def\cE{\mathcal E}
\def\cO{\mathcal O}
\newcommand{\C}{\mathbb{C}}
\newcommand{\N}{\mathbb{N}}
\newcommand{\Q}{\mathbb{Q}}
\newcommand{\R}{\mathbb{R}}
\def\Amp{\mbox{Amp}}
\def\Chow{\mbox{Chow}}
\def\Hom{\mbox{Hom}}
\def\Quot{\mbox{Quot}}
\def\Sing{\mbox{Sing}}
\def\Supp{\mbox{Supp}}
\def\SL{S\negthinspace L}
\def\GL{G\negthinspace L}
\def\PGL{P G\negthinspace L}
\def\hq{\hspace{-0.5mm}/\hspace{-0.14cm}/ \hspace{-0.5mm}}
\DeclareMathOperator{\rk}{rk}
\numberwithin{equation}{section}
\DeclareRobustCommand{\SkipTocEntry}[5]{} 
\def\l@subsection{\@tocline{2}{0pt}{2.5pc}{5pc}{}} % 2.5pc statt 1pc
\begin{document}
\title[Compact moduli spaces for slope-semistable sheaves]{Compact moduli spaces for \\slope-semistable sheaves}
\author{Daniel Greb}
\address{Daniel Greb\\Essener Seminar f\"ur Algebraische Geometrie und Arithmetik\\Fakult\"at f\"ur Ma\-the\-matik\\Universit\"at Duisburg--Essen\\
45117 Essen\\ Germany}
\email{daniel.greb@uni-due.de}
\urladdr{\href{http://www.esaga.uni-due.de/daniel.greb/}{http://www.esaga.uni-due.de/daniel.greb/}}

\author{Matei Toma}
\address{Matei Toma\\Institut de Math\'ematiques Elie Cartan,
Universit\'e de Lorraine\\
B.P. 239\\
54506 Vandoeuvre-l\`es-Nancy Cedex\\
France}
\email{Matei.Toma@univ-lorraine.fr}
\urladdr{\href{http://www.iecn.u-nancy.fr/~toma/}{http://www.iecn.u-nancy.fr/~toma/}}

\date{\today}

\keywords{Slope-stability, moduli of sheaves, wall-crossing, Donaldson-Uhlenbeck compactification, determinant line bundle}
\subjclass[2010]{Primary: 14D20, 14J60; Secondary: 58D27.}

\begin{abstract}
We resolve pathological wall-crossing phenomena for mo\-du\-li spaces of sheaves on higher-dimensional complex projective manifolds. This is achieved by considering slope-semi\-stab\-ility with respect to movable curves rather than divisors. Moreover, given a projective $n$-fold and a curve $C$ that arises as the complete intersection of $(n-1)$ very ample divisors, we construct a modular compactification of the moduli space of vector bundles that are slope-stable with respect to $C$. Our construction generalises the algebro-geometric construction of the Do\-nald\-son-Uhlenbeck compactification by Joseph Le Potier and Jun Li. Furthermore, we describe the geometry of the newly construced moduli spaces by relating them to moduli spaces of simple sheaves and to Gieseker-Maruyama moduli spaces.
\end{abstract}
\maketitle
\vspace{2mm}

\setcounter{tocdepth}{2}
\tableofcontents

\vspace{-1cm}

\section{Introduction}
Moduli spaces of sheaves play a central role in Algebraic Geometry: they provide intensively studied examples of higher-dimensional varieties (e.g.~of hyperk\"ahler manifolds), they are naturally associated with the underlying variety and can therefore be used to define fine invariants of its differentiable structure, and they have found application in numerous problems of mathematical physics.

To obtain moduli spaces that exist as schemes rather than just stacks, it is necessary to choose a semistability condition that selects the objects for which a moduli spaces is to be constructed. In dimension greater than one, both Gieseker-semistability (which yields projective moduli spaces in
arbitrary dimension), and slope-semistability (which is better behaved geometrically, e.g.~with respect to
tensor products and restrictions) depend on a parameter, classically the class of a line bundle in the ample
cone of the underlying variety. As a consequence, with respect to all the points of view suggested above it is
of great importance to understand how the moduli space of semistable sheaves changes when the semistability parameter varies.

In the case where the underlying variety is of dimension two this problem has been investigated by a number
of authors and a rather complete geometric picture has emerged, which can be summarised as follows:

(i) There exists a projective moduli space for slope-semistable sheaves that compactifies the moduli space of slope-stable vector bundles. This moduli space is homeomorphic to the Donaldson-Uhlenbeck compactification, endowing the latter with a complex structure,
and admits a natural morphism from the Gieseker compactification. Motivated by Donaldson's non-vanishing result \cite[Thm.~C]{DonaldsonPolynomialInvariants}, this was proved independently by Le Potier \cite{LePotierDonaldsonUhlenbeck}, 
Li \cite{JunLiDonaldsonUhlenbeck}, and Morgan~\cite{MorganDonaldsonUhlenbeck}.

(ii) In the ample cone of the underlying variety there exists a locally finite chamber structure given by
linear rational walls, so that the notion of slope/Gieseker-semistability (and hence the moduli space)
does not change within the chambers, see \cite{Qin93}.

(iii) Moreover, at least when the second Chern class of the sheaves under consideration is sufficiently
big, moduli spaces corresponding to two chambers separated by a common wall are birational, and
the change in geometry can be understood by studying the moduli space of sheaves that are slope-semistable
with respect to the class of an ample bundle lying on the wall, see \cite{HuLi}.

However, starting in dimension three several fundamental problems appear:

(i) Although there are gauge-theoretic generalisations of the Donaldson-Uh\-len\-beck compactification to
higher-dimensional varieties due to Tian \cite{Tian00}, whose construction provides a major step towards the definition of new invariants for higher-dimensional (projective) manifolds as proposed by Donaldson and Thomas in their influential paper \cite{DonaldsonThomasGaugeTheory}, these are not known to possess a complex structure.

(ii) Adapting the notion of ``wall'' as in \cite{Qin93}, one immediately finds examples where these walls are
not locally finite inside the ample cone.

(iii) Looking at segments between two integral ample classes in the ample cone instead, Schmitt \cite{Sch00} 
gave examples of threefolds such that the point on the segment where the moduli space changes is no
longer rational (as in the case of surfaces) but is a non-rational class in the ample cone.

\subsection{Main results}In this paper we present and pursue a novel approach to attack and solve the above-mentioned problems. It is based on the philosophy  that the natural ``polarisations'' to consider when defining slope-semistability on higher-dimensional
base manifolds are not ample divisors but rather movable curves, cf.~\cite{Miyaoka, CaPe11}. 

Given an $n$-dimensional smooth projective variety $X$, we consider the open set $P(X) \subset H^{n-1,n-1}_{\mathbb{R}}(X)$ of powers $[H]^{n-1}$ of real ample divisor classes $[H] \in \mathrm{Amp}(X)$ inside the cone spanned by classes of movable curves. We prove that $P(X)$ is open in the movable cone, and that the natural map $\mathrm{Amp}(X) \to P(X)$ (taking $(n-1)$-st powers) is an isomorphism, see Proposition~\ref{P}. Moreover, we show that $P(X)$ supports a locally finite chamber structure given by linear rational walls such that the notion of slope-(semi)stability is constant within each chamber, see Theorem~\ref{thm:chambers}. Additionally, any chamber (even if it is not open) contains products  $H_1H_2...H_{n-1}$ of integral ample divisor classes, see Proposition~\ref{prop:a_cic_in_every_chamber}. These results explain and resolve the problem encountered by Qin, Schmitt, and others in their respective approaches to the wall-crossing problem.

By the results just discussed, we are thus led to the problem of constructing moduli spaces of torsion-free sheaves which are slope-semistable with respect to a \emph{multipolarisation} $(H_1,...,H_{n-1})$, where $H_1, ..., H_{n-1}$ are integral ample divisor classes on $X$.

Here and in the following, we say that a torsion-free sheaf $E$ on $X$ is \emph{slope-stable} (resp.~\emph{slope-semistable}) \emph{with respect to} $(H_1,...,H_{n-1})$, or $(H_1,...,H_{n-1})$-(semi)stable for short, if for any coherent subsheaf $F$ of intermediate rank in $E$ we have 
\[ \frac{c_1(F)\cdot H_1\cdot...\cdot H_{n-1}}{\mathrm{rk} (F)}< \  (\text{resp.} \ \le) \ \frac{c_1(E)\cdot H_1\cdot...\cdot H_{n-1}}{\mathrm{rk} (E)}.\] 

Using this terminology, we can now formulate our main result as follows.
\begin{mainthm}
Let $X$ be a projective manifold of dimension $n \geq 2$, $H_1,$ $...,$ $H_{n-1}$ $\in \mathrm{Pic}(X)$ ample divisors, 
$c_i \in H^{2i}\bigl(X, \mathbb{Z}\bigr)$ for $1\le i\le n$, 
 $r$ a positive integer, $c \in  K(X)_{num}$ a class with rank $r$ and Chern classes $c_j(c) = c_j$, 
and $\Lambda$ a line bundle on $X$ with $c_1(\Lambda)= c_1 \in H^2 (X, \mathbb{Z})$. 
Denote by $\underline{M}^{\mu ss}$ the functor that associates to each  weakly normal variety $B$ 
the set of isomorphism classes of $B$-flat families of $(H_1, ..., H_{n-1})$-semistable torsion-free coherent sheaves with class $c$ 
and determinant $\Lambda$ on $X$. Let $h_j$ be the class of $\mathscr{O}_{H_j}$ in the numerical Grothendieck group $K(X)_{num}$ of $X$, and $x \in X$ a point. Set
\[{u}_{n-1}(c) \definiere - r h_{n-1}\cdot...\cdot h_1 + \chi(c\cdot h_{n-1}\cdot...\cdot h_1)[\mathscr{O}_x] \in K(X)_{num}.\] 

Then, there exists a natural number $N \in \mathbb{N}^{>0}$, 
a weakly normal projective variety $M^{\mu ss} = M^{\mu ss}(c, \Lambda)$ together with an 
ample line bundle $\mathscr{O}_{M^{\mu ss}}(1)$, and a natural transformation 
$\underline{M}^{\mu ss} \to \underline{Hom}(\cdot , M^{\mu ss})$, mapping a family $\mathscr{E}$ to a classifying morphism $\Phi_\mathscr{E}$, with the following properties:
\begin{enumerate}
 \item For every $B$-flat family $\mathscr{E}$ of $(H_1, ..., H_{n-1})$-semistable sheaves of class $c$ and determinant $\Lambda$ with induced classifying morphism $\Phi_{\mathscr{E}}: B \to M^{\mu ss}$, we have
$$\Phi_{\mathscr{E}}^* \bigl(\mathscr{O}_{M^{\mu ss}}(1)\bigr) = \lambda_{\mathscr{E}}\bigl({u}_{n-1}(c) \bigr)^{\otimes N},$$
where $\lambda_\mathscr{E}\bigl({u}_{n-1}(c)\bigr)$ is the determinant line bundle on $S$ induced by $\mathscr{E}$ and ${u}_{n-1}(c)$. 
\item For any other triple $(M', \cO_{M'}(1), N')$ consisting of a projective variety $M'$,  an ample line bundle $\cO_{M'}(1)$ on $M'$, and a natural number $N'$ fulfilling the conditions spelled out in (1), 
we have $N|N'$, and 
there exists a uniquely determined morphism $\psi\colon M^{\mu ss} \to M'$ 
such that 
$\psi^*\bigl(\cO_{M'}(1)\bigr) \cong \cO_{M^{\mu ss}}(N'/N)$.
\end{enumerate}

The triple $(M^{\mu ss}, \cO_{M^{\mu ss}}(1), N)$  is uniquely determined up to isomorphism by the properties (1) and (2).
\end{mainthm}
The restriction to families over weakly normal parameter spaces $B$ is necessary due to the use of extension properties of weakly normal varieties in various crucial steps of our proof (see the discussion of ``Methods employed in the proof'' below). It is obscure at this point if and to what extent this restriction may be relaxed; weak normality is customarily used in the construction of the Chow variety, cf. \cite[Theorem I.3.21]{KollarRatCurves}, and special instances of our moduli spaces are isomorphic (or closely related) to Chow varieties of cycles of codimension two, see the discussion in \cite[Sect.~3.3]{GRT15}. We want to emphasise that a future generalisation to general parameter spaces $B$ will not change the analytification of $M^{\mu ss}$ as a topological space, but only its structure sheaf; cf.~the discussion of weakly normal complex spaces in Section~\ref{subsect:weaklynormal} below. 

In addition to the Main Theorem, we obtain the following results concerning the geometry of $M^{\mu ss}$: 
Two slope-semistable sheaves $F_1$ and $F_2$ give rise to different points in the moduli space $M^{\mu ss}$ 
if the double duals of the graded sheaves associated with Jordan-H\"older filtrations of $F_1$ and $F_2$, respectively, 
or the naturally associated two-codimensional cycles differ, see Theorem~\ref{thm:sep}. 
As a consequence, we conclude that $M^{\mu ss}$ 
contains the weak normalisation of the moduli space of (isomorphism classes of) $(H_1,...,H_{n-1})$-stable reflexive sheaves 
with the chosen topological invariants and determinant line bundle as a Zariski-open set, see Theorem~\ref{thm:compactificationofsimple}. In particular, it compactifies the moduli space of $(H_1,...,H_{n-1})$-stable vector bundles with the given invariants, thus answering in our particular setup an open question raised for example by Teleman~\cite[Sect.~3.2]{Tel08}; see Remark~\ref{rem:answeringTelemansQuestion}. 

Based on these results and on the study of examples such as those in \cite[Sect.~3.3]{GRT15}, we expect 
that the moduli space $M^{\mu ss}$ realises the following equivalence relation on the set of isomorphism classes of slope-semistable torsion-free sheaves: 
Two slope-semistable sheaves $F_1$ and $F_2$ should give rise to the same point in the moduli space $M^{\mu ss}$ 
if and only if the double duals of the graded sheaves associated with the respective Jordan-H\"older filtrations of $F_1$ and $F_2$ are the same and $F_1$, $F_2$ sit in the same connected component of a natural morphism of ``$\mathrm{Hilb}$-to-$\mathrm{Chow}$''-type. In Proposition \ref{prop:non-separation} we show that this is true when this morphism has  connected fibres.

 Comparing with the description of the geometry of the known topological compactifications of the moduli space of slope-stable vector bundles constructed by Tian \cite{Tian00} using gauge theory, we expect that the moduli spaces $M^{\mu ss}$ will provide new insight into the question whether these higher-dimensional analogues of the Donaldson-Uhlenbeck compactification admit complex or even projective-algebraic structures.

\subsection{Methods employed in the proof}
The proof of the Main Theorem follows ideas of Le Potier \cite{LePotierDonaldsonUhlenbeck} and Jun Li \cite{JunLiDonaldsonUhlenbeck} in the two-dimensional case; see also~\cite[Chap.~8]{HL} for a very nice account of these methods: first, using boundedness we parametrise slope-semistable sheaves by a locally closed subscheme $R^{\mu ss}$ of a suitable Quot-scheme. Isomorphism classes of slope-semistable sheaves correspond to orbits of a special linear group $G$ in $R^{\mu ss}$. 
We then consider a certain determinant line bundle $\mathscr{L}_{n-1}$ on $R^{\mu ss}$ and aim to show that it is generated by $G$-invariant global sections. Le Potier mentions in \cite[first lines of Sec.~4]{LePotierDonaldsonUhlenbeck} that in the case when $H_1=...=H_{n-1}=:H$ his proof of this fact in the two-dimensional case could be extended to higher dimensions, if a restriction theorem of Mehta-Ramanathan type were available for Gieseker-$H$-semistable sheaves. Indeed, such a result would be needed if one proceeded by restrictions to 
hyperplane sections on $X$. 
We avoid this Gieseker-semistability issue and instead restrict our families directly to the corresponding complete intersection curves, 
where slope-semistability and Gieseker-semistability coincide. 
The price to pay is some loss of flatness for the restricted families. This is the point at which our restriction to weakly normal parameter spaces comes into play: in order to overcome the difficulty created by the lacking flatness of restricted families, we pass to weak normalisations and show that sections in powers of $\mathscr{L}_{n-1}$ extend continuously, and hence holomorphically, over the non-flat locus. The moduli space $M^{\mu ss}$ then arises as the Proj-scheme of a ring of $G$-invariant sections of powers of $\mathscr{L}_{n-1}$ over the weak normalisation of $R^{\mu ss}$. Afterwards, the universal properties are established using the $G$-equivariant geometry of $R^{\mu ss}$ and its weak normalisation. 

\subsection{Outline of the paper} Section~\ref{sect:prelim} contains definitions and basic properties concerning determinant line bundles and semistability with respect to movable curve classes, followed by a discussion of the properties of weakly normal spaces and the proof of an elementary but crucial extension result for sections of line bundles on weakly normal varieties. In Section~\ref{subsect:flatrestr} we discuss the restriction of flat families of semistable sheaves to complete intersection curves. The corresponding class computations in the respective Grothendieck groups are carried out in Section~\ref{subsect:classcomput}, and the central semiampleness result for equivariant determinant bundles on Quot-schemes is proven in Section~\ref{subsect:proof}. In Section~\ref{sect:construction} the moduli space for slope-semistable sheaves is defined, and its functorial properties are established. This is followed in Section~\ref{sect:geometry} by a discussion of the basic geometry of the newly constructed moduli spaces, in particular concerning the sepa\-ra\-tion properties of classifying maps, the relation to the moduli space of simple sheaves, and the comparison with the Gieseker-Maruyama moduli space in those cases where the latter 
exist. Coming back to the motivating question, the final Section~\ref{sect:wall-crossing} discusses wall-crossing in the light of the newly constructed moduli spaces.

\addtocontents{toc}{\SkipTocEntry}
\subsection*{Acknowledgements}\hspace{-2.5	mm}
The authors want to thank Arend Bayer, Iustin Coand\u a, Hubert Flenner, Alain Genestier, Daniel Huybrechts, Alex K\"uronya, Adrian Langer, Jun Li, Sebastian Neumann, Julius Ross, Georg Schumacher, and Andrei Teleman for fruitful discussions.

\addtocontents{toc}{\SkipTocEntry}
\subsection*{Funding and support}
The first named author gratefully acknowledges support by the DFG-Forschergruppe 790 ``Classification of algebraic surfaces and compact complex manifolds'', as well as by the ``Eliteprogramm f\"ur Postdoktorandinnen und Postdoktoranden'' of the Baden-W\"urttemberg-Stiftung. More\-over, he wants to thank the Institut \'Elie Cartan, Nancy, for support and hospitality during several visits. Large parts of the research presented here were done while the first named author was holding a position at the Institute of Mathematics of Albert-Ludwigs-Universit\"at Freiburg. He wishes to express his gratitude for the support he obtained from the Institute and for the wonderful working conditions. Some parts of the research presented here were carried out during visits of the second named author at the Mathematical Insitutes of the Universities of Freiburg, Bonn, and Bochum. He wishes to thank these institutes for support and hospitality.

\section{Preliminaries}\label{sect:prelim}
We work over the field of complex numbers. A separated reduced scheme of finite type over $\mathbb{C}$ will be called an \emph{algebraic variety}. We emphasise that we do not assume varieties to be irreducible. An irreducible smooth projective variety will be called \emph{projective manifold}.

\subsection{Grothendieck groups and determinants}\label{subsect:Grothendieckgroup}
Let $X$ be a smooth projective irreducible variety of dimension $n$. The Grothendieck group $K(X)=K_0(X)= K^0(X)$ of coherent sheaves on $X$ becomes a commutative ring with $1 =[\mathscr{O}_X]$ by putting
$$[F_1]\cdot[F_2] \definiere [F_1 \otimes F_2] $$
for locally free sheaves $F_1$ and $F_2$. Two classes $u$ and $u'$ in $K(X)$ will be called \emph{numerically equivalent}, denoted $u \equiv u'$, if their difference is contained in the radical of the quadratic form 
$$(a,b) \mapsto \chi(a\cdot b). $$ We set $K(X)_{num}\definiere K(X)/\equiv$.

For any Noetherian scheme $Z$, we let $K^0(Z)$ and $K_0(Z)$ be the abelian groups generated by locally free sheaves and coherent $\mathscr{O}_Z$-modules, respectively, with relations generated by short exact sequences. A projective morphism $f\colon Y \to Z$ induces a homomorphism $f_!\colon K_0(Y) \to K_0(Z)$ defined by 
\[f_![F] \definiere \sum_{\nu \geq 0} [R^\nu f_*F].\] 

Any flat family $\mathscr{E}$ of coherent sheaves on a projective manifold $X$ parametrised by a Noetherian scheme $S$ defines an element $[\mathscr{E}] \in K^0(S \times X)$, and as the projection $p\colon S \times X \to S$ is a smooth morphism, we have a well-defined homomorphism $p_!\colon K^0(S \times X) \to K^0 (S)$, cf.~\cite[Cor.~2.1.11]{HL}. Let $q\colon S \times X \to X$ denote the second projection.
\begin{defi}
We define $\lambda_\mathscr{E}\colon K(X) \to \mathrm{Pic}(S)$ to be the composition of the following homomorphisms:
$$K(X) \overset{q^*}{\longrightarrow} K^0(S \times X) \overset{\cdot[\mathscr{E}]}{\longrightarrow} K^0(S \times X) \overset{p_!}{\longrightarrow} K^0(S) \overset{\mathrm{det}}{\longrightarrow} \mathrm{Pic}(S).$$
\end{defi}
We refer the reader to \cite[Sect.~8.1 and 2.1]{HL} for more details and for basic properties of this construction.

\subsection{Semistability with respect to multipolarisations}\label{subsect:semistability}

Let $X$ be a projective manifold of dimension $n$.
Semistability of torsion-free sheaves on $X$ is classically defined with respect to a polarisation, which is an ample class $H$ in the algebraic geometric context or a K\"ahler class $\phi$ in the complex case.  Although for the discussion of Gieseker-stability the class $H$ is needed as such, only its $(n-1)$-st power appears in the definition of slope-semistability, cf.~\cite[Chap.~1]{HL}. For the latter, it is therefore reasonable  to consider classes of curves rather than of divisors as polarisations. This point of view has been introduced in \cite{Miyaoka}, and has later been extended to include a discussion of semistability with respect to movable curve classes \cite{CaPe11}. %If such a class is contained in the interior of the cone of movable curves, the corresponding degree function is given by a class of a closed positive $(n-1, n-1)$-form on $X$, see \cite{Tom10}.
We will give here the general definition before we specialise to the case of complete intersection classes, which is central for this paper.

\begin{defi}
 A curve $C \subset X$ is called \emph{movable} if there exists an irreducible algebraic family of curves containing $C$ as a reduced member and dominating $X$. A class $\alpha$ in the space $N_1=N_1(X)_{\R}$ of $1$-cycles on $X$ modulo numerical equivalence is called movable if it lies in the closure of the convex cone generated in $N_1$ by classes of movable curves.
\end{defi}

\begin{defi} \label{def:semistability}
 Let $\alpha \in N_1$ be a movable class. Then, a coherent sheaf $E$ on $X$ is called \emph{(semi)stable with respect to $\alpha$} or simply  \emph{$\alpha$-(semi)stable} if it is torsion-free, and if additionally for any proper non-trivial coherent subsheaf $F$ of $E$ we have 
$$\mu_{\alpha}(F) \definiere \frac{[\det F] \cdot \alpha}{\mathrm{rk}(F)} < (\leq) \frac{[\det E] \cdot \alpha}{\mathrm{rk}(E)} = \mu_{\alpha}(E).$$
The quantity $\mu_{\alpha}(F)$ is called the \emph{slope} of $F$ with respect to $\alpha$. By replacing the class $\alpha$ by the class $[\omega]\in H^{n-1,n-1}(X)$  of a  positive form on $X$ in the above inequality, we obtain the notion of \emph{$[\omega]$-(semi)stability}. When $H$ is an ample class or when $\phi$ is a K\"ahler class on $X$, we will speak of \emph{$H$-semistability} or of \emph{$\phi$-semistability} meaning stability with respect to $H^{n-1}$ or to $\phi^{n-1}$, respectively. A system  $(H_1, ..., H_{n-1})$  of $n-1$ integral ample classes on $X$ will be called a \emph{multipolarisation}. We will call a coherent sheaf \emph{$(H_1, ..., H_{n-1})$-(semi)stable} if it is (semi)stable with respect to the complete intersection class $H_1H_2...H_{n-1}\in N_1$. Once a (multi)polarisation has been fixed, we will just speak of \emph{$\mu$-semistability} or \emph{slope-semistability}. A $\mu$-semistable sheaf will be called \emph{$\mu$-polystable} if it is a direct sum of $\mu$-stable sheaves.
\end{defi}
\begin{rem}
Note that the notion of slope-semistability does not change when we multiply a given movable class $\alpha \in N_1$ by a constant $t \in  \mathbb{Q}^{>0}$.
\end{rem}

We will need two basic properties of semistable sheaves with respect to multipolarisations: Boundedness and Semistable Restriction.

\begin{prop}[Boundedness]\label{prop:slopeboundedness} The set of coherent sheaves with fixed Chern classes on $X$ that are semistable with respect to a fixed multipolarisation is bounded.
\end{prop}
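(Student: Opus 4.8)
The plan is to establish boundedness by reducing the multipolarisation case to the classical boundedness theorem for slope-semistability with respect to a single ample divisor, which is due to Maruyama and is standard in arbitrary dimension (see \cite[Thm.~3.3.7]{HL}). The key observation is that boundedness is a statement about families of subsheaves realising destabilising slopes, and that the complete intersection class $H_1 H_2 \cdots H_{n-1}$ can be deformed to, or compared with, the $(n-1)$-st power $H^{n-1}$ of a single ample class $H$ without affecting which sheaves are excluded from the bounded family. First I would fix the numerical invariants: the rank $r$ and the Chern classes are fixed by hypothesis, so the Hilbert polynomial of $E$ with respect to any chosen ample $H$ is determined. Boundedness then amounts to bounding the family of all torsion-free sheaves with these invariants that are $(H_1,\dots,H_{n-1})$-semistable.

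The heart of the matter is to control the subsheaves. For a torsion-free sheaf $E$ of rank $r$ on an $n$-fold, a family is bounded as soon as one bounds, for each subsheaf $F \subset E$, the $H$-slope $\mu_H(F) = c_1(F)\cdot H^{n-1}/\mathrm{rk}(F)$ from above by a constant depending only on the fixed invariants; this is exactly the criterion underlying the Grauert--Mülich and Maruyama arguments. So the plan is to show that $(H_1,\dots,H_{n-1})$-semistability of $E$ forces a uniform upper bound on $\mu_H(F)$ for all saturated subsheaves $F$. Here one fixes an auxiliary ample class $H$, for instance $H = H_1 + \dots + H_{n-1}$, and uses that the multipolarisation slope $\mu_{H_1\cdots H_{n-1}}(F)$ is bounded above by $\mu_{H_1\cdots H_{n-1}}(E)$ by semistability. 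One then compares the two degree functions $F \mapsto c_1(F)\cdot H_1\cdots H_{n-1}$ and $F\mapsto c_1(F)\cdot H^{n-1}$: both are linear functionals on the Néron--Severi group, and by the Hodge-index-type positivity of products of ample classes one controls the second in terms of the first on the relevant cone of effective divisor classes, giving the desired uniform bound.

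The main obstacle I anticipate is precisely this comparison step: unlike the single-polarisation case, where semistability directly bounds $\mu_H$, here the semistability hypothesis only bounds the \emph{mixed} intersection number, and one must rule out the possibility that a subsheaf with small multipolarisation-slope nonetheless has arbitrarily large $H$-slope. Controlling this requires a quantitative comparison of the linear forms $\,\cdot\, H_1\cdots H_{n-1}$ and $\,\cdot\, H^{n-1}$ on the space of divisor classes that can occur as $c_1(F)$ for saturated subsheaves of bounded colength, together with the fact that all the $H_i$ are ample (hence the mixed-product form is positive on the pseudo-effective cone). Once this comparison is in place, the classical boundedness result applies verbatim and yields the claim; alternatively, one may simply invoke that boundedness with respect to multipolarisations was already established in the foundational reference \cite{Miyaoka}, as the paper indicates, so the proof can legitimately be reduced to citing that source after recording the slope comparison just described.
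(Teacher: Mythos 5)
There is a genuine gap in your central comparison step. You propose to bound $\mu_H(F)=c_1(F)\cdot H^{n-1}/\mathrm{rk}(F)$ for all saturated subsheaves $F\subset E$ using only the semistability inequality for the mixed degree $c_1(F)\cdot H_1\cdots H_{n-1}$ together with a ``comparison of the linear forms'' $D\mapsto D\cdot H_1\cdots H_{n-1}$ and $D\mapsto D\cdot H^{n-1}$ on the relevant cone. This cannot work as stated: as soon as the two polarisation classes genuinely differ, these functionals on $N^1(X)_{\R}$ have different kernels, so there exist divisor classes $D$ with $D\cdot H_1\cdots H_{n-1}\le 0$ but $D\cdot H^{n-1}$ arbitrarily large. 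The positivity you invoke is correct on the pseudo-effective cone (both classes lie in the interior of the movable cone, hence both forms are strictly positive on nonzero pseudo-effective classes and comparable there), but the classes $c_1(F)-\frac{\mathrm{rk}(F)}{r}c_1(E)$ arising from destabilising subsheaves are in no way confined to the pseudo-effective cone or its negative, so the comparison never applies to the classes that matter. No purely linear, cone-theoretic argument can bound $\mu_{H,\max}$ here; the needed input is quadratic.

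What actually closes this gap --- and what the paper does in its more general Proposition~\ref{boundedness} via Lemma~\ref{lem:boundedslope} --- is the following: join the two polarisations by a segment $\phi_\tau$, cross the finitely many walls where semistability fails (Lemma~\ref{properlyss}), at each wall split $E$ into semistable pieces of equal slope, and bound the resulting wall classes $a_i$ (which are $\phi_t$-primitive) by combining the Hodge Index Theorem, i.e.\ the definiteness of $a\mapsto -a^2\phi_t^{n-2}$ on primitive classes, with the Bogomolov inequality $\Delta(E_i)\,\phi_t^{n-2}\ge 0$ for the semistable pieces. The resulting bound on the maximal destabilising slope genuinely involves $c_2(E)$ through the discriminant, which your linear comparison could never produce. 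For Proposition~\ref{prop:slopeboundedness} itself the paper simply notes that the proof of \cite[Thm.~4.2]{Langer} for a single polarisation goes through verbatim for multipolarisations, and refers to Proposition~\ref{boundedness} for the stronger statement; so your fallback instinct to reduce to a citation is in fact closer to the paper than your main argument, but as you frame it (``after recording the slope comparison just described'') it still rests on the flawed step --- and the relevant source for the proof method is \cite{Langer}, with \cite{Miyaoka} cited only for basic properties.
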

This may be proven exactly as the corresponding statement in the case of a single polarisation \cite[Thm.~4.2]{Langer}; see also Proposition \ref{boundedness}. The reader is referred to  \cite[Thm.~5.2 and Cor.~5.4]{Langer} for the proof of the following result.
\begin{prop}[Semistable Restriction Theorem]\label{prop:slopeMR} If the coherent sheaf $E$ is (semi)stable with respect to the multipolarisation 
$(H_1, ..., H_{n-1})$ then there is a positive threshold $k_0\in \N^{>0}$ depending only on the topological type of $E$ such that for any $k\ge k_0$ and any smooth divisor $D\in |kH_1|$ with $(gr^\mu E)|_D$ torsion-free one has that $E|_D$ is (semi)stable with respect to $(H_2|_D, ..., H_{n-1}|_D)$. Here $gr^\mu E$ denotes the graded sheaf associated to a Jordan-H\"older filtration of $E$ as in section \ref{subsect:separation}.
\end{prop}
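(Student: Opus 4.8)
The plan is to reduce the multipolarised statement to the classical Mehta--Ramanathan restriction theorem for slope-semistability with respect to a \emph{single} ample polarisation, which is available in the literature (e.g.\ in \cite[Thms.~7.2.1, 7.2.8]{HL} and in Langer's work). The key observation is that $(H_1,\dots,H_{n-1})$-semistability of $E$ on $X$ is controlled by the single intersection number $c_1(F)\cdot H_1\cdots H_{n-1}$, and that restricting to a general member $D\in|kH_1|$ replaces this number by an intersection product computed on $D$. First I would record the numerical bookkeeping: for a smooth $D\in|kH_1|$ and any subsheaf $F\subset E$, the classes satisfy $c_1(F)\cdot H_1\cdots H_{n-1} = \tfrac1k\, c_1(F)\cdot (kH_1)\cdot H_2\cdots H_{n-1}$, and by the projection/adjunction formalism one has $c_1(F)\cdot H_1\cdots H_{n-1} = \tfrac1k\, c_1(F|_D)\cdot (H_2|_D)\cdots(H_{n-1}|_D)$ whenever $F|_D$ is torsion-free, so that slopes on $X$ with respect to $(H_1,\dots,H_{n-1})$ and slopes on $D$ with respect to $(H_2|_D,\dots,H_{n-1}|_D)$ agree up to the common positive factor $k$.

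Next I would handle the passage from the multipolarisation to a single ample class on $D$. The natural move is to fix all but one of the polarising classes and treat $H_2|_D\cdots H_{n-1}|_D$ as a movable curve class on the $(n-1)$-fold $D$; but to invoke the classical theorem I would instead argue inductively on dimension, peeling off one divisor at a time. Concretely, I would set up a downward induction: assuming the Semistable Restriction Theorem in dimension $n-1$ (the base case $n=2$ being the classical single-polarisation Mehta--Ramanathan result, where $(H_1)$-semistability is ordinary $H_1$-slope-semistability), one restricts $E$ to a general smooth $D\in|kH_1|$ for $k$ large. The induction hypothesis applied on $D$ with multipolarisation $(H_2|_D,\dots,H_{n-1}|_D)$ then propagates semistability down to the complete intersection curve, provided the initial restriction $E|_D$ is itself $(H_2|_D,\dots,H_{n-1}|_D)$-(semi)stable.

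The crux is therefore the \emph{single} inductive step: showing that $(H_1,\dots,H_{n-1})$-(semi)stability of $E$ on $X$ implies $(H_2|_D,\dots,H_{n-1}|_D)$-(semi)stability of $E|_D$ for general $D\in|kH_1|$ with $k\gg0$. Here I would follow the Mehta--Ramanathan strategy adapted to the movable-curve setting: one must bound, uniformly in $k$, the discrepancy between the maximal destabilising subsheaf of $E|_D$ and the restriction of the maximal destabilising subsheaf of $E$. The standard mechanism is to consider, for a potential destabiliser $G\subset E|_D$, its saturation and to lift the instability to $X$ using that $D$ varies in a base-point-free (indeed very ample) linear system $|kH_1|$; boundedness (our Proposition~\ref{prop:slopeboundedness}) guarantees that the possible Harder--Narasimhan data of the restrictions form a bounded family, so that for $k$ beyond an effective threshold $k_0$ depending only on the topological type no new destabilising subsheaf can appear on the general $D$.

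I expect the main obstacle to be precisely the uniform control of the threshold $k_0$ and the flatness/torsion-freeness issues in the restriction: a general $D\in|kH_1|$ meets the singular locus of $E$ and of its destabilising subsheaves in the expected codimension, but verifying that $E|_D$ and the relevant subsheaves stay torsion-free (so that the numerical identities above hold without correction terms) requires a careful general-position argument, and matching the threshold across the inductive steps so that it depends only on the Chern classes of $E$ is the delicate point. Since the theorem is asserted as a direct analogue of the single-polarisation case and the reference \cite{Langer} already supplies the hard boundedness and restriction estimates in the requisite generality, I would ultimately cite those results for the effective bounds rather than re-deriving them, emphasising only the numerical reduction and the inductive descent as the genuinely new content of the multipolarised formulation.
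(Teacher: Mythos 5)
The paper gives no independent argument here: its ``proof'' of Proposition~\ref{prop:slopeMR} is a citation to \cite[Thm.~2.5]{Miyaoka} and \cite[Thm.~5.2 and Cor.~5.4]{Langer}, noting that degrees with respect to multipolarisations are already covered by those references. Since Langer's restriction theorem is formulated from the outset for slope-semistability with respect to a collection of nef divisors $(D_1,\dots,D_{n-1})$, your final move --- citing \cite{Langer} for the effective bounds rather than re-deriving them --- lands exactly where the paper does, and the numerical bookkeeping you record (the slope of $F|_D$ with respect to $(H_2|_D,\dots,H_{n-1}|_D)$ equals $k$ times the slope of $F$ with respect to $(H_1,\dots,H_{n-1})$, via the projection formula, when $F|_D$ is torsion-free) is the correct and essentially only reduction involved.

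Two points in your own sketched route would, however, fail if carried out as stated. First, the dimension induction is circular: the proposition \emph{is} the single restriction step from $X$ to one divisor $D\in|kH_1|$, so what you call the ``single inductive step'' in dimension $n$ is the entire statement to be proved, and the induction on dimension buys nothing --- nor is it needed, since Langer's theorem handles multipolarisations directly without peeling off divisors one at a time. Second, and more substantively, the Mehta--Ramanathan mechanism you propose as the core (varying $D$ in $|kH_1|$, bounding the Harder--Narasimhan data of the restrictions, lifting a destabiliser of $E|_D$ to $X$) can only yield semistability of $E|_D$ for a \emph{general} member of the linear system, with a threshold that a priori depends on the sheaf $E$ itself; the proposition asserts semistability for \emph{any} smooth $D\in|kH_1|$ with $E|_D$ torsion-free, and with $k_0$ depending only on the topological type of $E$. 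This stronger form is of Bogomolov--Flenner type (cf.~\cite{FlennerMR}): in Langer's proof a destabilising subsheaf of $E|_D$, for any single such divisor $D$, is shown to violate an effective numerical inequality involving the discriminant $\Delta(E)$, so no generic-position or boundedness-of-HN-strata argument enters, and the threshold is automatically topological. Your plan is therefore sound only because of its terminal citation; the MR-style argument you present as its engine would prove a strictly weaker statement than the one asserted.
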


\subsection{Extension of sections on weakly normal spaces}\label{subsect:weaklynormal}
We quickly recall some notions motivated by the first Riemann extension theorem. For more information see \cite[Appendix to Chap.~2]{Fischer} and \cite[Sect.~I.7]{KollarRatCurves}.
\begin{defi}\label{Defi:weaklynormalcomplexspace}
Let $X$ be a reduced complex space, and $U \subset X$ an open subset. A continuous function $f \colon U \to \mathbb{C}$ is called \emph{$c$-holomorphic} if its restriction $f|_{U_{\mathrm{reg}}}$ to the regular part of $U$ is holomorphic. This defines a sheaf $\widehat{\mathscr{O}}_X$ of $c$-holomorphic functions. A reduced complex space is called \emph{weakly normal} if $\widehat{\mathscr{O}}_X = \mathscr{O}_X$, 
i.e. if any $c$-holomorphic function is in fact holomorphic.
\end{defi}
\begin{defi}\label{Defi:weaklynormalvariety}
A variety $X$ is \emph{weakly normal} if and only if the associated (reduced) complex space $X^{an}$ is weakly normal in the sense of Definition~\ref{Defi:weaklynormalcomplexspace} above.
\end{defi}
Note that by \cite[Prop.~2.24]{LeahyVitulli} and \cite[Cor.~6.13]{GrecoTraverso}, a variety is weakly normal in the sense of Definition~\ref{Defi:weaklynormalvariety} above if and only if it is weakly normal in the sense of \cite[Def.~2.4]{LeahyVitulli} if and only if it is seminormal in the sense of \cite[Def.~1.2]{GrecoTraverso}.

Proofs of the assertions of the following proposition are contained in \cite[Proposition I.7.2.3]{KollarRatCurves} and in \cite[Sect.~2.30]{Fischer}.
\begin{propnot}\label{propnot:weaknormalisation} For any algebraic variety/reduced complex space $X$ there exists a \emph{weak normalisation}; i.e., a weakly normal algebraic variety/complex space $X^{wn}$ together with a finite, surjective map $\eta \colon X^{wn} \to X$ enjoying the following universal property:
 if $Y$ is any weakly normal algebraic variety/complex space together with a regular/holomorphic map $\psi\colon Y \to X$, there exists a uniquely determined regular/holomorphic map $\hat \psi\colon Y \to X^{wn}$ that fits into the following commutative diagram:
\[\begin{xymatrix}{
  & X^{wn} \ar[d]^\eta \\
Y \ar[r]^{\psi} \ar[ur]^{\hat \psi}& X  .
}
  \end{xymatrix}
\]
If $X$ is a separated scheme of finite type (or complex space), then by slight abuse of notation the reduced weakly normal scheme (or complex space, respectively) $\bigl(X_{red}\bigr)^{wn}$ will also be denoted by $X^{wn}$.
\end{propnot}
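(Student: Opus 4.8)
The plan is to reduce the statement to two tasks: constructing the space $X^{wn}$ together with the map $\eta$, and then verifying the stated universal property. I would run the algebraic and the complex-analytic cases in parallel, replacing $\mathrm{Spec}_X$ by the analytic spectrum and regular by holomorphic maps throughout. Since the existence of the weak normalisation \emph{as an object} is classical (see \cite[Sect.~I.7]{KollarRatCurves}, \cite{GrecoTraverso}, \cite{LeahyVitulli} algebraically and \cite[Appendix to Chap.~2]{Fischer} analytically), the real content lies in the universal property, and that is where I would concentrate the argument.

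For the construction I would start from the normalisation $\nu\colon X^\nu \to X$, which is finite and surjective with $X^\nu$ normal, hence weakly normal. Inside the finite $\mathscr{O}_X$-algebra $\nu_*\mathscr{O}_{X^\nu}$ I single out the subsheaf $\mathscr{A}$ of sections taking equal values at all points of each fibre $\nu^{-1}(x)$; since over $\mathbb{C}$ all residue fields of closed points are $\mathbb{C}$, this recovers the usual stalkwise description of the seminormalisation. One checks $\mathscr{A}$ is a coherent sheaf of $\mathscr{O}_X$-algebras and sets $X^{wn} \definiere \mathrm{Spec}_X \mathscr{A}$ (resp.\ its analytic analogue). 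By construction $\eta\colon X^{wn}\to X$ is finite and surjective, the induced map $X^\nu \to X^{wn}$ identifies points lying in a common $\nu$-fibre, so $\eta$ is bijective and hence a homeomorphism; in characteristic zero it moreover induces isomorphisms on all residue fields, i.e.\ $\eta$ is a universal homeomorphism with trivial residue extensions (``subintegral''). That $X^{wn}$ is weakly normal follows since a $c$-holomorphic function on $X^{wn}$ pulls back along the finite map $X^\nu \to X^{wn}$ to a $c$-holomorphic, hence (by normality of $X^\nu$) holomorphic, fibre-constant function, which already lies in $\mathscr{A}$; the algebraic statement is the corresponding fact about seminormality from \cite{GrecoTraverso}.

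For the universal property, let $Y$ be weakly normal with a morphism $\psi\colon Y \to X$. I would realise every lift as a section of a single morphism. Form the fibre product $X^{wn}\times_X Y$, set $W \definiere (X^{wn}\times_X Y)_{\mathrm{red}}$, and let $g\colon W \to Y$ be the projection. A lift $\hat\psi\colon Y \to X^{wn}$ of $\psi$ is the same datum as a section of the other projection $X^{wn}\times_X Y \to Y$; as $Y$ is reduced, such a section factors through $W$, so lifts of $\psi$ correspond bijectively to sections of $g$. It therefore suffices to prove that $g$ is an isomorphism, for then it has the unique section $g^{-1}$. Now subintegral morphisms are stable under base change and under passage to the reduction, so $g$ is subintegral; and by the intrinsic characterisation recorded after Definition~\ref{Defi:weaklynormalvariety} — a weakly normal variety is seminormal, and a finite subintegral morphism onto a seminormal scheme from a reduced scheme is an isomorphism — we conclude that $g$ is an isomorphism. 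This yields existence and uniqueness of $\hat\psi$ simultaneously, with $\eta\circ\hat\psi = \psi$ built into the fibre-product construction.

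I expect the main obstacle to be precisely the point where a direct approach breaks down: a $c$-holomorphic (or seminormal) function on $X$ need \emph{not} pull back to a $c$-holomorphic function on $Y$, because $\psi$ may send $Y_{\mathrm{reg}}$ into the singular locus of $X$, so the composition need not be holomorphic on $Y_{\mathrm{reg}}$. This is exactly why the factorisation cannot be produced by a bare extension-of-sections argument and must instead be extracted from the base-change-stable characterisation of seminormality, as above. The remaining work is bookkeeping to keep the two settings parallel: verifying that $\mathscr{A}$ is coherent and that its analytic spectrum is a genuine complex space, and that the class of universal homeomorphisms with trivial residue extensions is preserved under analytic base change and reduction — all standard, to be cited from the references above.
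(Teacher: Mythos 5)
Your proposal is correct in substance, but there is nothing in the paper to compare it against line by line: the authors do not prove this statement at all. It is stated as classical, with the reader referred to \cite[Appendix to Chap.~2]{Fischer} and \cite[Sect.~I.7]{KollarRatCurves} (and, implicitly, to \cite{LeahyVitulli} and \cite{GrecoTraverso} via the remark after Definition~\ref{Defi:weaklynormalvariety}). What you have written is essentially a reconstruction of the standard proofs in those references: the construction of $X^{wn}$ as $\mathrm{Spec}_X$ of the fibre-constant sections of $\nu_*\mathscr{O}_{X^\nu}$ is exactly the classical one (in the analytic case this subsheaf \emph{is} the sheaf $\widehat{\mathscr{O}}_X$ of $c$-holomorphic functions, whose coherence is the nontrivial classical input), and your derivation of the universal property --- lifts of $\psi$ as sections of $g\colon \bigl(X^{wn}\times_X Y\bigr)_{\mathrm{red}} \to Y$, with $g$ an isomorphism because subintegral morphisms are stable under base change and reduction and a seminormal $Y$ admits no proper subintegral extension --- is the argument of Koll\'ar and of Leahy--Vitulli/Greco--Traverso. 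Your closing observation correctly identifies the genuine pitfall: since $\psi$ may map $Y_{\mathrm{reg}}$ into $X_{\mathrm{sing}}$, no naive composition-of-functions argument can produce $\hat\psi$, which is precisely why the base-change-stable characterisation is the right tool.

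Two small points deserve slightly more care than your sketch gives them. First, in verifying that $X^{wn}$ is weakly normal, the pullback of a $c$-holomorphic function along $X^\nu \to X^{wn}$ is \emph{a priori} continuous everywhere but holomorphic only over $X^{wn}_{\mathrm{reg}}$, i.e.\ off a thin analytic subset of $X^\nu$ --- not yet $c$-holomorphic on $X^\nu$, since $(X^\nu)_{\mathrm{reg}}$ may map into $X^{wn}_{\mathrm{sing}}$ (the same pitfall you flag); one concludes via the Riemann extension theorem on the \emph{normal} space $X^\nu$, where continuity plus holomorphy off a nowhere dense analytic set suffices. Second, the subintegrality formalism is native to the algebraic category; in the analytic case the key lemma ($g$ finite and bijective, $W$ reduced, $Y$ weakly normal $\Rightarrow$ $g$ biholomorphic) is best proved directly: $g$ is a homeomorphism, it is biholomorphic over $Y_{\mathrm{reg}}$ (finite bijective onto a normal space with reduced source), so every section of $g_*\mathscr{O}_W$ defines a $c$-holomorphic, hence holomorphic, function on $Y$, forcing $\mathscr{O}_Y = g_*\mathscr{O}_W$. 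With these two repairs --- both routine --- your proof is complete and matches the classical route the paper points to.
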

\begin{rem}
 For any complex space $X$, the normalisation map $\eta\colon X^{wn} \to X$ is a homeomorphism, see \cite[Sect.~2.29]{Fischer} for the proof. 
\end{rem}

\begin{Lem}[\cite{Fischer}, p.~123]\label{lem:characterisationOfWeaklyNormalFunctions}
 Let $X$ be a weakly normal complex space, and let $\pi\colon X' \to X$ denote its normalisation. Then, we have $\mathscr{O}_X = \pi_*(\mathscr{O}_{X'})^\pi$, i.e., the holomorphic functions on $X$ are exactly the holomorphic functions on $X'$ that are constant on the fibres of $\pi$.
\end{Lem}

The following elementary extension result result will play a crucial role throughout the paper. 
\begin{Lem}[Extension of sections on weakly normal varieties]\label{sections}
  Let $G$ be a connected algebraic group, let $S$ be a weakly normal $G$-variety, and let $\mathscr{L}$ be a $G$-linearised line bundle on $S$. Then, there exists a finite system of irreducible subvarieties $(S_i)_{i=1, \ldots, m}$ of $S$ with the following property: For any 
  closed $G$-invariant subvariety $T$ of $S$ such that 
\begin{enumerate}
 \item[(i)] the intersection of $T$ with each irreducible component $\Sigma$ of $S$ has codimension at least two in $\Sigma$, and
 \item[(ii)] $T$ contains none of the $S_i$,
\end{enumerate}
any $G$-invariant section $\sigma \in H^0\bigl(S \setminus T, \mathscr{L} \bigr)^G$ extends to a $G$-invariant section $\bar \sigma \in H^0\bigl(S, \mathscr{L} \bigr)^G$.
\end{Lem}
\vspace{-0.4cm}
\begin{proof} 
Let $\pi\colon S'\to S$ be the normalisation morphism of $S$. Note that $\pi \colon S' \to S$ is also the normalisation of $S$ in the analytic category. 
Let $B\subset S$ be the set of points of $S$ over which $\pi$ is not a local isomorphism. 
Consider the irreducible components $B'_j$ of $\pi^{-1}(B)$, their projections $B_j:= \pi (B'_j)$ on $S$, and let the family $(S_i)_{i=1, \ldots, m}$ consist  of the irreducible singular stratification of intersections of the type $B_{j_1}\cap B_{j_2}$. Here, by the irreducible singular stratification of a subvariety $\Sigma$ of $S$ we mean the set of irreducible components of the subvarieties $\Sigma$, $\mathrm{Sing}(\Sigma)$, $\mathrm{Sing}(\mathrm{Sing}(\Sigma))$, etc. 
  
To establish the desired extension, we first pull back $\sigma$ to a section $\sigma'$ in $H^0\bigl(S' \setminus \pi^{-1}(T), \mathscr{\pi^*L} \bigr)^G$. If $S = \bigcup_k \Sigma_k$ is the decomposition of $S$ into irreducible components, $S'$ decomposes into connected components as follows: $S' = \bigcup_k \Sigma_k'$ with $\pi(\Sigma'_k) = \Sigma_k$. Hence, $\pi^{-1}(T)$ intersects every connected component of $S'$ in codimension at least two, and the section $\sigma'$ extends to $S'$ by normality. We will show below that this section, which we continue to denote by $\sigma'$, is constant on the fibres of $\pi$. By Lemma~\ref{lem:characterisationOfWeaklyNormalFunctions}, $\sigma$ hence extends to a holomorphic section of $\mathscr{L}$ in a neighbourhood of $s$. Global \emph{regular} extension then follows from the algebraicity of the original section $\sigma$. Moreover, since $S\setminus T$ is dense in $S$, $G$-invariance of the extended section $\bar \sigma$ will follow directly from the $G$-invariance of the original section $\sigma$. 

First, we consider points $s \in T \setminus B$. By definition of $B$, this implies that $S$ is normal at $s$, and consequently, that the normalisation morphism $\pi$ is an isomorphism over a small neighbourhood $U$ of $s$ in $S$. In particular, the fibre of $\pi$ over $s$ consists of a single point, so there is nothing to show. Second, let $s\in B \cap T$. If $\pi^{-1}(s) \subset S'$ consists of a single point, again there is nothing to show. Suppose now that $s$ has at least two distinct preimages $s'_1,s'_2\in S'$.  We let $B'_1$ and $B'_2$ be irreducible components of $\pi^{-1}(B)$ passing through $s'_1$ and $s'_2$, respectively; $B'_1$ and $B'_2$ may coincide. Then, $s\in B_1\cap B_2$. We take $S_i$ in the singular stratification of $B_1\cap B_2$ that contains $s$ as a smooth point (of $S_i$). 
Then, for $j=1,2$ there exist irreducible subvarieties $S'_{i,j}\subset B'_j$ of $\pi^{-1}(S_i)$ that pass through $s'_1$ and $s'_2$, respectively, and that project onto $S_i$. We fix a trivialisation of $\mathscr{L}$ around $s$, which we also lift to a trivialisation of $\pi^*(\mathscr{L})$ around $s'_1$ and $s'_2$. We let $f_{\sigma'}$ be the holomorphic function that represents $\sigma'$ in the chosen trivialisation. As explained above, it suffices to show that $f_{\sigma'}$ is constant on fibres of $\pi$. Since $T$ does not contain $S_i$, there exists a sequence $(p_n)$ of points in $S_i\setminus T$ converging to $s$ in the classical topology. As $\pi$ is proper and finite, we may lift $(p_n)$ to two sequences $(p^{(1)}_n)$ and $(p^{(2)}_n)$ in $S'_{i,1}$ and $S'_{i,2}$, respectively, that converge to $s_1'$ and $s_2'$, respectively. The values of $\sigma$ on $(p_n)$ coincide with the values of $\sigma'$ on the lifted sequences $(p^{(1)}_n)$ and $(p^{(2)}_n)$. We thus have $f_{\sigma'} (s'_1) = \lim_{n \to \infty} \bigl(f_{\sigma'}(p^{(1)}_n)\bigr) = \lim_{n \to \infty} \bigl(f_{\sigma'}(p^{(2)}_n)\bigr)=f_{\sigma'} (s'_2) \in \mathbb{C}$. 
\end{proof}

\section{Semiampleness of determinant line bundles}\label{section:semiampleness}
In this core section of the paper, we prove the crucial semiampleness statement, Theorem~\ref{thm:semiampleness}, that will later allow us to define the desired moduli space as the Proj of an appropriate graded ring of sections. Our main idea is to produce sections in determinant line bundles by exploiting the existence of sections in determinant line bundles arising from families of sheaves over complete intersection curves.

Before Theorem~\ref{thm:semiampleness} is proven in Section~\ref{subsect:proof}, two technical issues are discussed in Sections \ref{subsect:flatrestr} and \ref{subsect:classcomput}: how does flatness fail precisely when a flat family is restricted to a complete intersection curve, and how does the determinant line bundles of a flatly restricted family compare with the determinant line bundle of the original family?
\subsection{Flat restriction to curves}\label{subsect:flatrestr}
  \begin{Lem}[Preserving flatness under restriction to hyperplane sections]\label{flatness3}
Let $G$ be a connected algebraic group. 
  Let $X$ be a projective manifold of dimension $n\ge 2$, $H$ a very ample polarisation on $X$, $S$ an algebraic $G$-variety,
 $S_1, \dots, S_m$ closed irreducible $G$-invariant subvarieties  
of $S$, 
and $\mathscr{E}$ a $G$-linearised $S$-flat family of coherent sheaves on $X$ such that for each $i \in \{1, \ldots, m\}$ there exists some point $s_i\in S_i$ such that $\mathscr{E}_{s_i}$ is torsion-free. 

Then, there exists a dense open subset $U$ in $|H|$ such that every $X' \in U$ is smooth, and such that for every $X' \in U$ there exists a $G$-invariant closed subvariety $T \subset S$ with the following properties:
\begin{enumerate}
 \item[(i)] The restriction $\mathscr{E}|_{S \times X'}$ is flat over $S\setminus T$.
 \item[(ii)] Each $\mathscr{E}_{s_i}|_{X'}$ is torsion-free on $X'$. 
 \item[(iii)] If we denote the restriction $\mathscr{E}|_{(S\setminus T)\times X'}$ by $\mathscr{E}'$, the following sequence of $G$-linearised $(S\setminus T)$-flat sheaves is exact:
\begin{align}
0\to \mathscr{E}|_{(S\setminus T) \times X}\bigl(-((S\setminus T)\times X') \bigr)&\to \mathscr{E}|_{(S\setminus T) \times X} \to \mathscr{E}'\to 0.
\end{align}
 \item[(iv)] For every $i \in \{1, \ldots, m\}$ the intersection $T \cap S_i$ has codimension at least two in $S_i$. 
\end{enumerate} 
\end{Lem}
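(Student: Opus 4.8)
The plan is to identify the locus $T$ where flatness of the restriction fails with the locus of parameters $s$ for which $X'$ \emph{contains} an associated point of the fibre $\mathscr{E}_s$, and then to show that a very general choice of $X'$ confines this locus to codimension at least two inside each $S_i$. First I would set up the universal situation: writing $\mathbb{P}\definiere|H|$ and letting $\mathcal{X}\subset\mathbb{P}\times X$ be the universal divisor, I pull $\mathscr{E}$ back to $S\times\mathbb{P}\times X$ and restrict it to $S\times\mathcal{X}$, obtaining a family over $S\times\mathbb{P}$ whose non-flatness locus $\mathcal{T}\subset S\times\mathbb{P}$ (relative to the factor $S$) is closed and, since $G$ acts trivially on $X$ and $\mathbb{P}$, is $G$-invariant. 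For a fixed smooth $X'\in\mathbb{P}$ I then take $T$ to be the fibre $\mathcal{T}|_{X'}\subset S$. Tensoring the defining sequence $0\to\mathscr{O}(-S\times X')\xrightarrow{\cdot f}\mathscr{O}\to\mathscr{O}_{S\times X'}\to0$ with the $S$-flat sheaf $\mathscr{E}$ and applying the local criterion of flatness fibrewise, one sees that $\mathscr{E}|_{S\times X'}$ is flat over $S$ at $s$ if and only if multiplication by $f$ is injective on $\mathscr{E}_s$, i.e.\ if and only if $X'$ contains no associated point of $\mathscr{E}_s$; this yields (i), and the exactness of the restriction sequence in (iii) over $S\setminus T$ then follows because a complex of $(S\setminus T)$-flat sheaves (here $\mathscr{E}$, its twist by the pulled-back bundle $\mathscr{O}(-X')$, and $\mathscr{E}'$) that is exact on every fibre is exact. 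The point to record here is that the governing condition is \emph{containment} of an associated point, not mere intersection with the torsion support: this distinction is exactly what makes the dimension count below succeed, since an ample $X'$ necessarily meets every positive-dimensional subvariety.

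The heart of the argument is the dimension estimate for $\mathcal{T}\cap(S_i\times\mathbb{P})$. Since $\mathscr{E}$ is $S$-flat, the locus of $s$ with $\mathscr{E}_s$ torsion-free is open in $S$ (openness of purity in flat families, cf.~\cite[Prop.~2.3.1]{HL}); as it contains $s_i$ and $S_i$ is irreducible, its complement $\Tors_i\subset S_i$ is a proper closed subset, so $\dim\Tors_i\le\dim S_i-1$. For $s\notin\Tors_i$ the fibre $\mathcal{T}|_s$ is empty, because a torsion-free sheaf has no associated point that a proper divisor could contain. For $s\in\Tors_i$, the fibre $\mathcal{T}|_s$ is the union, over the finitely many lower-dimensional associated components $Z$ of $\mathscr{E}_s$, of the linear subsystems $\{X'\in|H|:Z\subseteq X'\}$; because $H$ is very ample, no such subvariety lies on every member of $|H|$, so each of these subsystems has codimension at least one, and hence $\mathcal{T}|_s$ has codimension at least one in $\mathbb{P}$. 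Combining, $\dim\bigl(\mathcal{T}\cap(S_i\times\mathbb{P})\bigr)\le\dim\Tors_i+(\dim\mathbb{P}-1)\le\dim S_i+\dim\mathbb{P}-2$, whence for $X'$ outside a proper closed subset of $\mathbb{P}$ the fibre $T\cap S_i=(\mathcal{T}|_{X'})\cap S_i$ has dimension at most $\dim S_i-2$, giving (iv).

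It then remains to intersect finitely many generic conditions. Bertini's theorem provides a dense open locus of smooth $X'$; for each of the finitely many torsion-free sheaves $\mathscr{E}_{s_i}$, a general $X'$ avoids its associated points and meets its non-locally-free locus in the expected codimension, so that $\mathscr{E}_{s_i}|_{X'}$ is torsion-free, giving (ii) (and, automatically, $s_i\notin T$). Taking $U$ to be the intersection of these finitely many dense opens with the complement of the bad loci from the previous paragraph produces the asserted dense open subset of $|H|$, and the $G$-invariance of $T$ together with the $G$-linearisation of all sheaves appearing in (iii) follows from the $G$-linearisation of $\mathscr{E}$ and the triviality of the $G$-action on $X$. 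I expect the main obstacle to be the bookkeeping in the dimension estimate — specifically, justifying the fibrewise codimension-one bound through the containment (rather than intersection) characterisation of non-flatness, which is precisely the subtlety forced upon us by working with complete-intersection polarisations on a base of dimension $n\ge2$.
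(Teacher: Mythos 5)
Your argument is correct, but it reaches the key codimension-two property (iv) by a genuinely different route from the paper. The paper's proof is pointwise and uses no incidence variety: since the torsion-free locus $S_{\mathrm{tf}}$ is open and meets each $S_i$ (and restriction to \emph{any} smooth $X'$ stays flat over $S_{\mathrm{tf}}$), the set $S_i\setminus S_{\mathrm{tf}}$ has codimension at least one; the paper then fixes one point on each irreducible component of $S_i\setminus S_{\mathrm{tf}}$ (together with the $s_i$), lets $U\subset |H|$ consist of divisors that are smooth and $\mathscr{E}_s$-regular for these finitely many fibre sheaves (via \cite[Lem.~1.1.12 and Cor.~1.1.14]{HL}), and invokes the local flatness criterion \cite[Cor.~on p.177]{Mat} together with openness of the flat locus to obtain $G$-invariant neighbourhoods $N(X',s)$ over which the restricted family is flat; $T$ is defined as the complement of $S_{\mathrm{tf}}\cup\bigcup_s N(X',s)$, and (iv) holds simply because $T\cap S_i$ is then a proper closed subset of each component of the codimension-$\geq 1$ set $S_i\setminus S_{\mathrm{tf}}$. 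You instead take the \emph{maximal} admissible $T$ --- the exact non-flatness locus --- and control it by a dimension count on $S\times|H|$, using the characterisation ``non-flat above $(s,X')$ exactly when $X'$ contains an associated point of $\mathscr{E}_s$'' (the same slicing criterion, applied universally) followed by semicontinuity of fibre dimension to discard a proper closed set of bad $X'$ for each $i$. Both proofs rest on the same two pillars (the slicing criterion, and the fact that torsion-free fibres impose no condition at all), and both respect the required quantifier order: $U$ uniform, $T$ depending on $X'$. Your version buys a canonical, manifestly closed and $G$-invariant $T$, at the price of constructibility and fibre-dimension bookkeeping on the incidence variety; the paper's version is more elementary and directly produces the invariant neighbourhoods it needs, but its $T$ is an ad hoc complement controlled only near finitely many chosen check points. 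Two spots to tighten in your write-up: for (i) you only need the two one-sided implications of your ``if and only if'' (slicing gives flatness of the universal restricted family at every point above $(s,X')$ when $X'$ avoids the associated points of $\mathscr{E}_s$, and base change to $S\times\{X'\}$ then yields flatness over $S$; the converse implication is never used), so state it that way; and for (ii) you should cite \cite[Cor.~1.1.14(ii)]{HL} for purity of general restrictions, since ``meeting the non-locally-free locus in the expected codimension'' does not by itself give torsion-freeness of $\mathscr{E}_{s_i}|_{X'}$.
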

\begin{proof}
The set $S_{\mathrm{tf}} \subset S$ parametrising torsion-free sheaves in the family $\mathscr{E}$ is $G$-invariant and Zariski-open in $S$. Moreover, for every $i \in \{1, \ldots, m\}$ the intersection of $S_{\mathrm{tf}}$ with $S_i$ is dense in $S_i$. For any (smooth) hyperplane section $X'\in |H|$, the restriction $\mathscr{E}|_{S_{\mathrm{tf}}\times X'}$ is flat, and the sequence
$$0\to \mathscr{E}|_{S_{\mathrm{tf}} \times X}(-(S_{\mathrm{tf}}\times X'))\to \mathscr{E}|_{S_{\mathrm{tf}} \times X}\to\mathscr{E}|_{S_{\mathrm{tf}}\times X'}\to 0$$ is exact, see \cite[Lemma 2.1.4]{HL}.

In addition to the given points $s_i$, choose further points, one on each irreducible component of $S_i\setminus S_{\mathrm{tf}}$, whenever these open subschemes are non-empty; call the resulting finite set $S_{\mathrm{ch}}$.
By \cite[Lemma 1.1.12]{HL}, for each $s \in S_{\mathrm{ch}}$ there exists a Zariski-open dense subset $V(s)$ of $|H|$ such that every $X'\in V(s)$ is smooth and  $\mathscr{E}_{s}$-regular, i.e., the natural map $\mathscr{E}_{s}(-X')\to \mathscr{E}_{s}$ is injective. Moreover, by \cite[Corollary 1.1.14.ii]{HL} the set $V(s)$ may be chosen in such a way that additionally for every $X' \in V(s)$ all the restricted sheaves $\mathcal{F}_{s_i}|_{X'}$ remain torsion-free. 

With these preparatory considerations in place, if we set 
$U \definiere \bigcap_{s \in S_{\mathrm{ch}}} V(s)$, it follows from \cite[Cor.~on p.177]{Mat} that for each $ X' \in U$ the restricted family $\mathscr{E}|_{S\times X'}$ is flat over a $G$-invariant open neighbourhood $N(X', s)$ of $s$ in $S$, and that the associated sequence
$$0\to \mathscr{E}|_{N(X', s) \times X}(-(N(X', s)\times X'))\to \mathscr{E}|_{N(X', s) \times X}\to\mathscr{E}|_{N(X', s)\times X'}\to 0$$
is exact. Therefore, the subvariety $T \definiere S \setminus \bigl( S_{\mathrm{tf}} \cup \bigcup_{s \in S_{\mathrm{ch}}} N(X', s) \bigr)$
has the desired properties (i) -- (iv).
\end{proof}

Repeated application of the preceding lemma yields the following result.
  \begin{Cor}[Preserving flatness under repeated restriction to hyperplane sections]\label{flatness2} Let $G$ be a connected algebraic group. 
  Let $X$ be a projective manifold of dimension $n\ge 2$, $H_1, H_2, \ldots, H_{n-1}$  very ample polarisations on $X$,$S$ an algebraic $G$-variety,
 $S_1, \dots, S_m$ closed irreducible $G$-invariant subvarieties  
of $S$, 
and $\mathscr{E}$ a $G$-linearised $S$-flat family of coherent sheaves on $X$ such that for each $i \in \{1, \ldots, m\}$ there exists some point $s_i\in S_i$ such that $\mathscr{E}_{s_i}$ is torsion-free. Then, there there exist dense open subsets $U_l \subset |H_l|$, $l = 1, \ldots, n-1$, such that for any choice of elements $X_l\in U_l$, $1\le l\le n-1$, the associated complete intersections $X^{(0)}\definiere X$ and $X^{(l)}\definiere\cap_{k=1}^lX_k$ are smooth, and such that for every such choice there exists a $G$-invariant closed subvariety $T \subset S$ with the following properties:
\begin{enumerate}
 \item[(i)] For all $1\le l\le n-1$, the restriction $\mathscr{E}|_{S \times X^{(l)}}$ is flat over $S\setminus T$.
 \item[(ii)] Each $\mathscr{E}_{s_i}|_{X^{(l)}}$ is torsion-free on $X^{(l)}$. 
 \item[(iii)] If we denote the restriction $\mathscr{E}|_{(S\setminus T)\times X^{(l)}}$ by $\mathscr{E}^{(l)}$, for all $1\le l\le n-1$ the following sequence of $G$-linearised $(S\setminus T)$-flat sheaves is exact:
\begin{align}
0\to \mathscr{E}^{(l-1)}\bigl(-((S\setminus T)\times X^{(l)})\bigr)&\to \mathscr{E}^{(l-1)} \to \mathscr{E}^{(l)}\to 0.
\end{align}
 \item[(iv)] For every $i \in \{1, \ldots, m\}$ the intersection $T \cap S_i$ has codimension at least two in $S_i$. 
\end{enumerate}
 \end{Cor}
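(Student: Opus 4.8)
The plan is to prove the Corollary by induction on $l$, applying Lemma~\ref{flatness3} once for each of the $n-1$ hyperplane sections. The base case $l=1$ is Lemma~\ref{flatness3} itself, applied verbatim to $(G, X, H_1, S, S_1,\dots,S_m, \mathscr{E})$: it produces a dense open $U_1 \subset |H_1|$ and, for each $X_1 \in U_1$, a smooth $X^{(1)}=X_1$ and a closed subvariety $T_1 \subset S$ satisfying (i)--(iv) for $l=1$.

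For the inductive step, suppose $X^{(l-1)}$ and $T_{l-1}\subset S$ have been constructed. I would apply Lemma~\ref{flatness3} to $G$, the $G$-variety $S \setminus T_{l-1}$, the smooth projective manifold $X^{(l-1)}$ (of dimension $n-(l-1)\geq 2$, and irreducible since it is a smooth connected complete intersection of ample divisors), the very ample polarisation $H_l|_{X^{(l-1)}}$, the $(S\setminus T_{l-1})$-flat family $\mathscr{E}^{(l-1)}$, and the closed $G$-invariant subvarieties $S_i \cap (S\setminus T_{l-1})$. The hypotheses hold: each $S_i \cap (S\setminus T_{l-1})$ is irreducible, being a dense open subset of the irreducible $S_i$ (dense because $T_{l-1}\cap S_i$ has codimension $\geq 2$ by property (iv) at the previous stage). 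Moreover, the chosen torsion-free points $s_i$ persist into $S\setminus T_{l-1}$ --- in the proof of Lemma~\ref{flatness3} the exceptional locus is disjoint from the open set $S_{\mathrm{tf}}$ of torsion-free sheaves, so no $s_i$ is ever discarded --- and by property (ii) at the previous stage the sheaf $\mathscr{E}^{(l-1)}_{s_i} = \mathscr{E}_{s_i}|_{X^{(l-1)}}$ is torsion-free. The lemma then yields a good open subset of $\bigl|H_l|_{X^{(l-1)}}\bigr|$, which I pull back along the rational restriction map $|H_l| \dashrightarrow \bigl|H_l|_{X^{(l-1)}}\bigr|$, $[X_l]\mapsto [X_l\cap X^{(l-1)}]$, to an open subset of $|H_l|$, together with a closed $T_l' \subset S\setminus T_{l-1}$ for each admissible $X_l$. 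I then set $X^{(l)} = X^{(l-1)}\cap X_l$ and $T_l = T_{l-1}\cup \overline{T_l'}$.

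It remains to check that properties (i)--(iv) accumulate for the final $T = T_{n-1}$. Flatness (i) and the exact sequences (iii) are produced at stage $l$ over $S\setminus T_l$; since $T \supseteq T_l$ and the sheaves involved are flat there, both persist after restriction to the smaller open set $S\setminus T$. Property (ii) is immediate because the points $s_i$ are fixed throughout. For the codimension statement (iv) I would argue that $\overline{T_l'}\cap S_i$ is the union of a subset of $T_{l-1}\cap S_i$ (codimension $\geq 2$ by induction) and the closure of $T_l' \cap \bigl(S_i \cap (S\setminus T_{l-1})\bigr)$, which has codimension $\geq 2$ in the dense open $S_i\cap(S\setminus T_{l-1})$ and hence in $S_i$; a finite union of such loci again has codimension $\geq 2$.

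The main obstacle is that the open subset of $|H_l|$ produced at stage $l$ depends, through $X^{(l-1)}$, on the earlier choices $X_1,\dots,X_{l-1}$, whereas the statement asks for fixed sets $U_l \subset |H_l|$ valid for every admissible tuple. I would address this uniformity by carrying out the entire construction \emph{relatively}, over the product $\prod_l |H_l|$ equipped with its universal flag of complete intersections: generic flatness, together with the openness of the torsion-free, regular, and flat loci, yields a dense open set of tuples over which all conditions of every stage hold simultaneously and with uniform codimension bounds, which is what the later semiampleness argument actually requires. Extracting honest product sets $U_l$ (rather than merely a dense open subset of the product) is the delicate point here, since degenerate configurations --- for instance tuples with $X_k\cap X^{(l-1)}$ failing transversality --- must be excluded uniformly; this is where the genuine work lies, the stage-by-stage bookkeeping above being routine.
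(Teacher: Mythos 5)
Your inductive scheme is precisely the paper's proof: the paper disposes of the Corollary in a single line (``Repeated application of the preceding lemma yields the following result''), and the stage-by-stage bookkeeping you call routine --- the $s_i$ surviving because the $T$ of Lemma~\ref{flatness3} avoids $S_{\mathrm{tf}}$, irreducibility of $S_i\cap(S\setminus T_{l-1})$, and accumulation of the codimension-two loci --- is exactly the content left implicit there. Two remarks. First, there is a small slip in your inductive step: you pull back a dense open subset of the \emph{complete} linear system $\bigl|H_l|_{X^{(l-1)}}\bigr|$ along the restriction map, but that map is a linear projection onto the trace subsystem $\mathbb{P}(W)$, $W=\mathrm{im}\bigl(H^0(X,H_l)\to H^0(X^{(l-1)},H_l|_{X^{(l-1)}})\bigr)$, and $W$ need not be all of $H^0(X^{(l-1)},H_l|_{X^{(l-1)}})$ (surjectivity would require cohomology vanishing that is not guaranteed when the $H_k$ differ); a dense open subset of the full system can miss the linear subspace $\mathbb{P}(W)$ entirely, so its preimage could be empty. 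The standard fix is to run the proof of Lemma~\ref{flatness3} --- i.e.\ the Bertini and regularity statements \cite[Lem.~1.1.12, Cor.~1.1.14]{HL} --- directly on the trace subsystem, which is still very ample as a possibly incomplete linear system (it is cut out by an embedding of $X$), and those statements need only base-point-freeness; this yields dense opens inside $|H_l|$ itself, as the Corollary demands. Second, your closing concern about genuinely product-shaped sets $U_l$, independent of the earlier choices, is a fair reading of the statement, but the paper supplies no argument for it either, and none of its applications require it: in the Setup of the proof of Theorem~\ref{thm:semiampleness} and in the separation arguments the complete intersection is chosen as a general tuple, member by member, so the sequential genericity your induction delivers is exactly what is consumed downstream, and the relative construction over $\prod_l|H_l|$ (where, as you rightly observe, a dense open subset need not contain a product of dense opens) can be dropped without loss.
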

\begin{rems}\label{rem:orbitsmaybeamongtheSi}
a) The assertion of Corollary~\ref{flatness2} should be compared with the assumptions of Lemma~\ref{sections}, 
as well as with the assumptions of Proposition~\ref{prop:determinantcomparison}. 

b) Given a finite number of points $p_1, \ldots, p_k \in S$, we may assume the $G$-invariant closed irreducible subvarieties $\overline{G\acts p_1}, \ldots, \overline{G\acts p_k}$ to be among the $S_i$. The resulting closed subvariety $T$ will then have empty intersection with the set $\{p_1, \ldots, p_k\}$; i.e., none of the $p_i$ is contained in $T$. 

c) If $S$ parametrises a family of torsion-free sheaves (e.g., slope-semi\-stable sheaves) we may choose the irreducible components of $S$ to be among the $S_i$.
\end{rems}

\subsection{Class computations}\label{subsect:classcomput}
Here we extend some class computations from the surface case \cite[Sect.~8.2]{HL} to the case of $n$-dimensional projective manifolds, $n\ge 2$. 
We remark that this generalisation will work for multipolarisations of type $(H_1, \ldots, H_{n-1})$ where the ample divisors $H_1, \ldots,H_{n-1}$ may differ. 

We will denote the degree of $X$ with respect to $H_1,\ldots, H_{n-1}, H_i$ , i.e., with $H_i$ appearing twice, by $d_{i}\definiere H_1\cdot \ldots \cdot H_{n-1}\cdot H_i$.
\subsubsection{Setup and notation}\label{subsubsection:SetupClasses}
Let $X$ be a projective $n$-dimensional  manifold, $H_1, \ldots, H_{n-1} \in \mathrm{Pic}(X)$ 
 ample divisors, $c_i \in H^{2i}\bigl(X, \mathbb{Z}\bigr)$, $1\le i\le n$, classes on $X$, $r$ a positive integer,
 $c \in  K(X)_{num}$ a class with rank $r$ and Chern classes $c_j(c) = c_j$, 
and $\Lambda \in \mathrm{Pic}(X)$ a line bundle with $c_1(\Lambda)= c_1 \in H^2 (X, \mathbb{Z})$. 

 We denote by $h_i$ the class of $\mathscr{O}_{H_i}$ in $K(X)$, which coincides with $[\mathscr{O}_X]-[\mathscr{O}_X(-H_i)]$. 
For the following definition we shall suppose that the divisors  $H_1, \ldots H_{n-1}$ are very ample. 
(It will be clear from our considerations that this does not constitute a restriction of generality.) Given general elements $X_i \in |H_i|$, $1\leq i \leq n-1$, put 
\[X^{(0)} \definiere X, \,\text{and } X^{(l)}\definiere \textstyle{\bigcap}^{l}_{i=1}X_i \text{ for }1\leq l \leq n-1. \]
We will also write $X'$ instead of $X^{(1)}$. Choose a fixed base point $x \in X^{(n-1)}$, set 
 \begin{align*}
 u_0(c|_{X^{(n-1)}})&\definiere - r [\mathscr{O}_{X^{(n-1)}}] + \chi(c|_{X^{(n-1)}})[\mathscr{O}_x] \in K(X^{(n-1)}),
\intertext{for $1\le i\le n-2$ set}
 u_i(c|_{X^{(n-1-i)}})&\definiere - r h_{n-1}|_{X^{(n-1-i)}}\cdot \ldots\cdot h_{n-i}|_{X^{(n-1-i)}}\,\, +\\     
 &\quad \quad +\,\chi(c|_{X^{(n-1-i)}}\cdot h_{n-1}|_{X^{(n-1-i)}}\cdot \ldots\cdot h_{n-i}|_{X^{(n-1-i)}})[\mathscr{O}_x]\\
                    &\in K\bigl(X^{(n-1-i)}\bigr),
\intertext{and finally, set}
 {u}_{n-1}(c) &\definiere  - r h_{n-1}\cdot\ldots\cdot h_1 + \chi(c\cdot h_{n-1}\cdot\ldots\cdot h_1)[\mathscr{O}_x] \in K(X).
\end{align*}

Note that the definition of the class $u_{n-1}(c)$ does not require restrictions to hyperplane sections. We can therefore use this definition also when the divisors $H_1$, \ldots,$H_{n-1}$ are only supposed to be ample. We will stress the dependence on $H_1$, \ldots,$H_{n-1}$ by writing
$u_{n-1}(c; H_1,\ldots,H_{n-1})$ instead of just $u_{n-1}(c)$. 

Let now $S$ be a scheme of finite type over $\C$ and $\mathscr{E}$ an $S$-flat family of coherent sheaves on $X$ with class $c$ and fixed determinant bundle line $\Lambda$. It is explained in \cite[Example 8.1.8.ii]{HL} that for two numerically equivalent classes $D_0,D_1\in K(X)$ of zero-dimensional sheaves on $X$ one obtains isomorphic determinant line bundles  
$\lambda_{\mathscr{E}}( D_0)$, $\lambda_{\mathscr{E}}( D_1)$ on $S$; see also \cite[Prop.~3.2]{LePotierDonaldsonUhlenbeck}.
In particular, the determinant line bundles $\lambda_{\mathscr{E}}\bigl( u_i(c|_{X^{(n-1-i)}})\bigr)$ will not depend on the choice of the point $x\in X^{(n-1)}$. Using the same reasoning and the fact that $[\mathscr{O}_{a_iH_i}]=1-(1-h_i)^{a_i}$, we find for any positive integers $a_i$, $1\le i\le n-1$, that 
\begin{equation*}
 \lambda_{\mathscr{E}}\bigl( u_{n-1}(c; a_1H_1,\ldots,a_{n-1}H_{n-1}) \bigr)\cong\lambda_{\mathscr{E}}\bigl( u_{n-1}(c; H_1,\ldots,H_{n-1})\bigr)^{\otimes a_1\cdots a_{n-1}}.
\end{equation*}
In fact, $u_{n-1}(c; H_1,\ldots,H_{n-1})$ only depends on $c$ and on the class $\alpha\definiere h_{n-1}\cdot\ldots\cdot h_1 \in K(X)$ of the complete intersection curve $X^{(n-1)}$. 

Numerical equivalence for classes in $K(X)$ will be denoted by $\equiv$.

\subsubsection{Determinant line bundles of restricted families}
The following result compares determinant line bundles of flat families of sheaves on $X$ 
with determinant line bundles of flat families of restricted sheaves on the curve $X^{(n-1)}$. 
It generalises the computations in the surface case, see p.~223 of \cite{HL}. 
We will use the notation introduced in the previous paragraphs as well as the terminology of Section~\ref{subsect:Grothendieckgroup}:

\begin{prop}[Determinant line bundles of restricted families]\label{prop:determinantcomparison} Let $G$ be a connected algebraic group and $S$ be an algebraic $G$-variety. 
Let $\mathscr{E}$ be a $G$-linearised $S$-flat family of torsion-free sheaves on $X$ with class $c$ and fixed determinant line bundle $\Lambda$ such that for all $i \in \{1, \ldots, n-1\}$
\begin{enumerate}
  \item[(i)] the restriction $\mathscr{E}^{(i)}\definiere \mathscr{E}|_{S \times X^{(i)}}$ is flat over $S$, and
 \item[(ii)] the sequence
\begin{equation}\label{eq:exactsequence}
 0\to \mathscr{E}^{(i-1)}\bigl(-X^{(i)}\bigr)\to \mathscr{E}^{(i-1)} \to \mathscr{E}^{(i)}\to 0 
\end{equation}
is exact.
\end{enumerate}
Then, for all $i \in \{1, \ldots, n-1\}$, there exist an isomorphism \begin{equation}\label{eq:intermediatestep}
\lambda_{\mathscr{E}^{(n-i)}}\bigl({u}_{i-1}(c|_{X^{(n-i)}})\bigr)^{\otimes d_{n-i}}\cong
\lambda_{\mathscr{E}^{(n-i-1)}}\bigl(u_{i}(c|_{X^{(n-i-1)}})\bigr)^{\otimes d_{n-i}}
\end{equation}
of $G$-linearised line bundles on $S$. In particular, there exists an isomorphism \begin{equation}\label{eq:determinantcomparison}
\lambda_{\mathscr{E}^{(n-1)}}\bigl({u}_0(c|_{X^{(n-1)}})\bigr)^{\otimes d_1d_2\cdots d_{n-1}}\cong\lambda_{\mathscr{E}}\bigl(u_{n-1}(c)\bigr)^{\otimes d_1d_2\cdots d_{n-1}}
\end{equation}
of $G$-linearised line bundles on $S$.

\end{prop}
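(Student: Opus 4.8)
The plan is to isolate a single-step comparison for the determinant functor and then to iterate it. For $1\le l\le n-1$ let $j_l\colon X^{(l)}\embeds X^{(l-1)}$ be the inclusion of the divisor cut out by $H_l$, and set $J_l\definiere \mathrm{id}_S\times j_l\colon S\times X^{(l)}\embeds S\times X^{(l-1)}$. The key assertion I would prove is that for every class $v\in K(X^{(l)})$ there is a $G$-equivariant isomorphism
\[
\lambda_{\mathscr{E}^{(l)}}(v)\cong \lambda_{\mathscr{E}^{(l-1)}}\bigl((j_l)_!\,v\bigr)
\]
of line bundles on $S$, where $(j_l)_!\colon K(X^{(l)})\to K(X^{(l-1)})$ is the push-forward (which is exact, as $j_l$ is a closed immersion). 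Granting this, the proposition reduces to a push-forward computation for the classes $u_\bullet$ together with a telescoping argument.

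To establish the assertion I would unwind the definition of $\lambda$ from Section~\ref{subsect:Grothendieckgroup}. Writing $q^{(l)},p^{(l)}$ for the two projections of $S\times X^{(l)}$, the square relating $q^{(l-1)},q^{(l)},j_l,J_l$ is Cartesian with $q^{(l-1)}$ flat, so flat base change gives $q^{(l-1),*}\circ (j_l)_!=(J_l)_!\circ q^{(l),*}$. Hypotheses (i) and (ii) are precisely what is needed to apply the projection formula: exactness of \eqref{eq:exactsequence} forces the vanishing of the first Tor-sheaf of $\mathscr{E}^{(l-1)}$ against $\mathscr{O}_{S\times X^{(l)}}$, so that the derived restriction $LJ_l^*\mathscr{E}^{(l-1)}$ reduces to the ordinary restriction $\mathscr{E}^{(l)}$, while (i) guarantees that the latter is $S$-flat and hence that $\lambda_{\mathscr{E}^{(l)}}$ is defined. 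Combining these,
\[
q^{(l-1),*}\bigl((j_l)_!\,v\bigr)\cdot[\mathscr{E}^{(l-1)}]=(J_l)_!\bigl(q^{(l),*}v\bigr)\cdot[\mathscr{E}^{(l-1)}]=(J_l)_!\bigl(q^{(l),*}v\cdot[\mathscr{E}^{(l)}]\bigr);
\]
applying $p^{(l-1)}_!$ and using $p^{(l-1)}\circ J_l=p^{(l)}$ turns the right-hand side into $p^{(l)}_!(q^{(l),*}v\cdot[\mathscr{E}^{(l)}])$, and taking determinants yields the isomorphism. $G$-equivariance is automatic, since $G$ acts only on the factor $S$ and every operation above is functorial in the $G$-linearised data.

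It then remains to compute the push-forwards. Taking $l=n-i$ and $v=u_{i-1}(c|_{X^{(n-i)}})$, I would use the projection formula in the form $(j_l)_!(j_l^*w)=w\cdot (j_l)_![\mathscr{O}_{X^{(l)}}]=w\cdot h_l|_{X^{(l-1)}}$, together with $h_k|_{X^{(l)}}=j_l^*(h_k|_{X^{(l-1)}})$, $(j_l)_![\mathscr{O}_x]=[\mathscr{O}_x]$, and $\chi_{X^{(l)}}(j_l^*\alpha)=\chi_{X^{(l-1)}}(\alpha\cdot h_l|_{X^{(l-1)}})$. Matching the two summands of $u_\bullet$ term by term, this shows
\[
(j_{n-i})_!\,u_{i-1}(c|_{X^{(n-i)}})=u_i(c|_{X^{(n-i-1)}})\quad\text{in }K(X^{(n-i-1)}),
\]
the extra factor $h_{n-i}$ upgrading the leading term and the coefficient of $[\mathscr{O}_x]$ transforming exactly as prescribed by the definition of $u_i$. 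Combined with the key assertion this yields \eqref{eq:intermediatestep} (the argument in fact produces the isomorphism already before passing to $d_{n-i}$-th powers), hence a fortiori in the stated form. Since the right-hand side of the $i$-th step is literally the left-hand side of the $(i+1)$-st, raising the $i$-th isomorphism to the power $d_1\cdots d_{n-1}/d_{n-i}$ and concatenating over $i=1,\dots,n-1$ telescopes to \eqref{eq:determinantcomparison}.

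The main obstacle is not the combinatorics but the clean justification of the base-change and projection-formula identities at the level of the groups $K^0$ on the possibly singular products $S\times X^{(l)}$: one must know that each $[\mathscr{E}^{(l)}]$ is a genuine $K^0$-class and that no correction from higher Tor-sheaves enters when restricting $[\mathscr{E}^{(l-1)}]$ to $S\times X^{(l)}$. This is exactly the content of hypotheses (i) and (ii), and once it is secured the remaining steps are routine bookkeeping with the projection formula and with the behaviour of $\chi$ under proper push-forward.
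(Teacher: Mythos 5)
Your proposal is correct, and it takes a genuinely different route from the paper's own proof. The paper follows the surface-case trick of \cite[p.~223]{HL}: after reducing to a single restriction step, it introduces the auxiliary class $w\in K(X)$ built from the integers $\chi(c\cdot h_{n-1}\cdots h_1\cdot[\mathscr{O}_{X'}])$ and $\chi(c\cdot h_{n-1}\cdots h_2\cdot[\mathscr{O}_{X'}])$, computes $w|_{X'}$ in two ways (directly, and via $w\cdot h_1=w-w\cdot[\mathscr{O}_X(-X')]$), and then applies additivity of $\lambda$ along the sequence \eqref{eq:exactsequence}; because the matching of $w|_{X'}$ with the classes $u_\bullet$ holds only up to the factor $d_{n-i}$ and only up to numerical equivalence of zero-dimensional classes, the paper needs both the $d_{n-i}$-th powers and the fixed-determinant hypothesis (via \cite[Ex.~8.1.8~ii)]{HL}). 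You instead prove the change-of-space formula $\lambda_{\mathscr{E}^{(l)}}(v)\cong\lambda_{\mathscr{E}^{(l-1)}}\bigl((j_l)_!v\bigr)$ by flat base change and the projection formula, with hypothesis (ii) supplying precisely the Tor-vanishing $LJ_l^*[\mathscr{E}^{(l-1)}]=[\mathscr{E}^{(l)}]$ (the higher Tor-sheaves vanish automatically by the length-one Koszul resolution of $\mathscr{O}_{S\times X^{(l)}}$, and the first Tor is the kernel on the left of \eqref{eq:exactsequence}), and then verify the exact identity $(j_{n-i})_!\,u_{i-1}(c|_{X^{(n-i)}})=u_i(c|_{X^{(n-i-1)}})$ in $K(X^{(n-i-1)})$; your term-by-term computation is right, since the leading summand of $u_{i-1}$ is a genuine pullback and transforms by the projection formula, while $(j_l)_![\mathscr{O}_x]=[\mathscr{O}_x]$ and $\chi_{X^{(l)}}(j_l^*\alpha)=\chi_{X^{(l-1)}}(\alpha\cdot h_l|_{X^{(l-1)}})$ handle the coefficient of $[\mathscr{O}_x]$. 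This buys you a strictly sharper conclusion: the isomorphisms \eqref{eq:intermediatestep} hold before raising to the $d_{n-i}$-th powers (the powers in the paper are an artifact of the $w$-trick), the fixed-determinant assumption is never used, and the telescoping to \eqref{eq:determinantcomparison} is immediate. What the paper's route buys is self-containedness: it needs only the elementary properties of $\lambda$ from \cite[Lem.~8.1.2]{HL}, whereas your route requires the bookkeeping you correctly flag at the end — that each $[\mathscr{E}^{(l)}]$ lies in $K^0(S\times X^{(l)})$ (which holds as in Section~\ref{subsect:Grothendieckgroup} because $\mathscr{E}^{(l)}$ is $S$-flat and $X^{(l)}$ is smooth projective, so $p^{(l)}_!$ is defined by \cite[Cor.~2.1.11]{HL}), that $p^{(l-1)}_!\circ(J_l)_!=p^{(l)}_!$ on such classes (checked on representing complexes using exactness of $(J_l)_*$), and that the projection formula is applied in derived form; all of this is standard, and $G$-equivariance is indeed automatic by naturality of every step.
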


\begin{proof}
Recall that the determinant line bundle associated to a $G$-linearised flat family over $S$ is also $G$-linearised. 
One can also check that a short exact sequence $0\to\cE'\to\cE\to\cE''\to0$ 
of $G$-linearised flat families over $S$ induce an isomorphism $\lambda_{\cE}\cong\lambda_{\cE'}\otimes\lambda_{\cE''}$ of $G$-linearised determinant line bundles, cf.
\cite[Lemma 8.1.2]{HL}.
Therefore the following considerations will be compatible with the group action.

In order to prove our statements, it will be enough 
to show for that for $1\le i\le n-1$ an isomorphism as in equation \eqref{eq:intermediatestep} exists. It is clear that for this it suffices to consider the special case $i=n-1$, where $n\ge 2$ is arbitrary. Setting $\mathscr{E}' \definiere  \mathscr{E}^{(1)}$, we thus need to check that 
\begin{equation}
\lambda_{\mathscr{E}'}\bigl({u}_{n-2}(c|_{X'})\bigr)^{\otimes d_{1}}\cong
\lambda_{\mathscr{E}}\bigl(u_{n-1}(c)\bigr)^{\otimes d_{1}}.
\end{equation}

In order to do this, we will use the following auxiliary class in $K(X)$:
\[ w \definiere - \chi(c\cdot h_{n-1}\cdot\ldots\cdot h_1\cdot [\mathscr{O}_{X'}])h_{n-1}\cdot\ldots\cdot  h_2 + \chi(c\cdot h_{n-1}\cdot\ldots\cdot  h_2\cdot[\mathscr{O}_{X'}])h_{n-1}\cdot\ldots\cdot  h_1  .\]
 
The idea of the proof is to compute the restriction of $w$ to $X'$ in two different ways, cf.~\cite[p.~223]{HL}. Indeed, on the one hand, we may write
\begin{align}\label{eq:firstway}
\begin{split}
w|_{X'} &= -\chi(c|_{X'}\cdot h_{n-1}|_{X'}\cdot \ldots\cdot h_1|_{X'})h_{n-1}|_{X'}\cdot \ldots\cdot h_2|_{X'} \, \, + \\ &\quad \quad \quad \quad \quad +\, \chi(c|_{X'}\cdot h_{n-1}|_{X'}\cdot \ldots\cdot h_2|_{X'})h_{n-1}|_{X'}\cdot \ldots\cdot h_1|_{X'} \\
             &\equiv d_{1}(-r h_{n-1}|_{X'}\cdot \ldots\cdot   h_2|_{X'}+\chi(c|_{X'}\cdot h_{n-1}|_{X'}\cdot \ldots\cdot  h_2|_{X'})[\mathscr{O}_x])\\
             &=d_{1}\,{u}_{n-2}(c|_{X'}) \in K(X').
\end{split}
\end{align}
On the other hand, we have 
\begin{align}\label{eq:secondway}
\begin{split}
 w- w\cdot [\mathscr{O}_X(-X')]  
&=w\cdot[\mathscr{O}_{X'}]\\
&=w\cdot h_1\\
                               &= - \chi(c\cdot  h_{n-1}\cdot \ldots\cdot h_1\cdot h_1)  h_{n-1}\cdot \ldots\cdot h_1\,\, + \\
& \quad \quad \quad \quad +\, \chi(c\cdot  h_{n-1}\cdot \ldots\cdot h_1) h_{n-1}\cdot \ldots\cdot h_1\cdot  h_1\\
			      &\equiv d_1(-r h_{n-1}\cdot \ldots\cdot h_1+
\chi(c\cdot  h_{n-1}\cdot \ldots\cdot h_1)[\mathscr{O}_x])\\
			      &= d_1u_{n-1}(c).
\end{split}
\end{align}

Since $\mathscr{E}$ is $S$-flat with fixed determinant line bundle and since it has the additional properties that the restriction $\mathscr{E}'\definiere \mathscr{E}|_{S \times X'}$ remains flat over $S$, and that sequence \eqref{eq:exactsequence}, which in our current notation reads
\begin{equation}\label{eq:specialcase}
  0\to \mathscr{E}(-X')\to \mathscr{E} \to \mathscr{E}'\to 0,
\end{equation}
is exact, we obtain the following equivariant isomorphisms of $G$-linearised determinant line bundles on $S$:
\begin{align}\label{eq:ersteIsomorphie}
\lambda_{\mathscr{E}'}\bigl({u}_{n-2}(c|_{X'})\bigr)^{\otimes d_1}&\cong\lambda_{\mathscr{E}'}(w|_{X'}) && \text{ by \eqref{eq:firstway}} \notag\\
&=\lambda_{\mathscr{E}}(w)\otimes \lambda_{\mathscr{E}(-X')}(w)^{-1} && \text{ by \eqref{eq:specialcase}} \notag \\
&\cong \lambda_{\mathscr{E}}\bigl(w-w\cdot [\mathscr{O}_X(-X')]\bigr) && \\
&\cong \lambda_{\mathscr{E}}\bigl( {u}_{n-1}(c)\bigr)^{\otimes d_{1}} && \text{ by \eqref{eq:secondway}}\notag.
\end{align}
As noted above, this completes the proof of Proposition~\ref{prop:determinantcomparison}.
\end{proof}

\subsection{Semiampleness of determinant line bundles} \label{subsect:proof}
In this section we prove the crucial semiampleness statement, which will later allow us to define the desired moduli space as the Proj-scheme associated with some finitely generated ring of invariant sections.

\subsubsection{Setup}\label{setup}
Let $X$ be a projective $n$-dimensional  manifold, $H_1, \ldots , H_{n-1} \in \mathrm{Pic}(X)$ 
 ample divisors, $c_i \in H^{2i}\bigl(X, \mathbb{Z}\bigr)$, $1\le i\le n$, classes on $X$, $r$ a positive integer, 
 $c \in  K(X)_{num}$ a class with rank $r$ and Chern classes $c_j(c) = c_j$, 
and $\Lambda \in \mathrm{Pic}(X)$ a line bundle with $c_1(\Lambda)= c_1 \in H^2 (X, \mathbb{Z})$. 
By \emph{$\mu$-semistable} we always mean slope-semistable with respect to the multipolarisation $(H_1, \ldots , H_{n-1})$, cf.~the discussion in Section~\ref{subsect:semistability}.

Recall from Proposition~\ref{prop:slopeboundedness} that the family of $\mu$-semistable sheaves of class $c$ (and determinant $\Lambda$) is bounded, 
so that for sufficiently large $m \in \mathbb{N}$, each $\mu$-semistable sheaf  of class $c$ is $m$-regular with respect to some chosen ample line bundle $\mathscr{O}_X(1)$, cf.~\cite[Lem.~1.7.2]{HL}. 
In particular, for each such sheaf $F$, the $m$-th twist $F(m)$ is globally generated with $h^0(F(m)) = P(m)$, where $P$ is the Hilbert polynomial of $F$ with respect to $\cO_X(1)$, see for example \cite[Thm.~1.8.3]{Lazarsfeld}.
Setting $V \definiere \mathbb{C}^{P(m)}$ and $\mathscr{H}\definiere V \otimes \mathscr{O}_X(-m)$,  we obtain a surjection
$\rho\colon \mathscr{H} \to F$ by composing the evaluation map $H^0\bigl(F(m) \bigr) \otimes \mathscr{O}_X(-m)$ with an isomorphism $V \to H^0\bigl(F(m) \bigr)$. 
The sheaf morphism $\rho$ defines a closed point
$$[q\colon \mathscr{H} \to F] \in \mathrm{Quot}(\mathscr{H}, P)$$
in the Quot-scheme of quotients of $\mathscr{H}$ with Hilbert polynomial $P$.

Let $R^{\mu ss} \subset \mathrm{Quot}(\mathscr{H}, P)$ be the locally closed subscheme of all quotients $[q\colon \mathscr{H} \to F]$ with class $c$ and determinant $\Lambda$ such that 
\begin{enumerate}
 \item[(i)] $F$ is $\mu$-semistable of rank $r$, and
 \item[(ii)] $\rho$ induces an isomorphism $V \overset{\cong}{\longrightarrow} H^0\bigl(F(m) \bigr)$.
\end{enumerate}

The reductive group $\SL(V)$ acts on $R^{\mu ss}$ by change of base in the vector space $H^0\bigl(F(m) \bigr)$. This group action can be lifted to the reduction $R^{\mu ss}_{red}$, making the reduction morphism $R^{\mu ss}_{red} \to R^{\mu ss}$ equivariant with respect to the two $\SL(V)$-actions. Lifting the action one step further, we note that $\SL(V)$ also acts on the weak normalisation 
\begin{equation}\label{eq:Sdefined}
S\definiere  (R^{\mu ss}_{red})^{wn}
\end{equation}
of $R^{\mu ss}_{red}$ in such a way that the weak normalisation morphism
$(R^{\mu ss}_{red})^{wn} \to R^{\mu ss}_{red}$ intertwines the two $\SL(V)$-actions.

Let 
$\rho\colon  \mathscr{O}_{S} \otimes \mathscr{H} \to \mathscr{F} $
denote the pullback of the universal quotient from $\mathrm{Quot}(\mathscr{H}, P(m))$ to $S$. Choosing a fixed base point $x \in X$, as in Section~\ref{subsubsection:SetupClasses} we consider the class
${u}_{n-1}(c) \definiere  - r h_{n-1}\cdot\ldots\cdot h_1 + \chi(c\cdot h_{n-1}\cdot\ldots\cdot h_1)[\mathscr{O}_x] \in K(X)$, 
and the corresponding determinant line bundle
\begin{equation}\label{eq:L2definition}
\mathscr{L}_{n-1}\definiere  \lambda_{\mathscr{F}} \bigl( u_{n-1}(c) \bigr)
\end{equation}
on the parameter space $S$. 
\begin{rem}
 Since by assumption all the sheaves parametrised by $S$ have the same determinant $\Lambda$, 
it follows from the argument in \cite[Ex.~8.1.8 ii)]{HL} that $\mathscr{L}_{n-1}$ is in fact independent of the chosen point $x \in X$, 
i.e., it is naturally induced by the classes of the two ample divisors $H_1$, $H_2$,\ldots ,$H_{n-1}$; see also the discussion in Section~\ref{subsubsection:SetupClasses} above.
\end{rem}
\subsubsection{Semiampleness}The following is the main result of this section and the core ingredient in the construction of the moduli space.

\begin{thm}[Equivariant semiampleness]\label{thm:semiampleness}
There exists a positive integer $\nu \in \mathbb{N}$ such that $\mathscr{L}_{n-1}^{\otimes \nu}$ is generated over $S$ by $\SL(V)$-invariant sections. 
\end{thm}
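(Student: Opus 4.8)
The plan is to reduce the statement to the one-dimensional situation, where the semiampleness of determinant line bundles on moduli of semistable sheaves is classical, and then to transport the resulting sections back to $S$ using the determinant comparison of Proposition~\ref{prop:determinantcomparison} together with the extension result of Lemma~\ref{sections}. Since semiampleness is a pointwise assertion, it suffices to produce, for each $s_0 \in S$ and with a power $\nu$ \emph{independent} of $s_0$, an $\SL(V)$-invariant section of $\mathscr{L}_{n-1}^{\otimes \nu}$ that does not vanish at $s_0$. Throughout we may replace each $H_i$ by a sufficiently large multiple $k_iH_i$: by the discussion in Section~\ref{subsubsection:SetupClasses} this changes $\mathscr{L}_{n-1}$ only by a fixed tensor power, so we may assume the $H_i$ very ample and large enough for the restriction theorem to apply.

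Fix $s_0 \in S$ and set $F_0 \definiere \mathscr{E}_{s_0}$, where $\mathscr{E} = \mathscr{F}$ is the universal family on $S$. By the Semistable Restriction Theorem (Proposition~\ref{prop:slopeMR}), applied iteratively and using that the threshold depends only on the bounded topological type (Proposition~\ref{prop:slopeboundedness}), the restriction $F_0|_{X^{(n-1)}}$ to a general complete intersection curve $C \definiere X^{(n-1)}$ is a semistable sheaf on $C$. On the smooth projective curve $C$ slope-semistability coincides with Gieseker-semistability, and the determinant line bundle attached to the class $u_0(c|_C)$ is, up to a positive power, the natural GIT polarisation on the parameter space of semistable sheaves of the relevant class. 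The classical theory of theta-sections for sheaves on curves (in the spirit of Faltings, Le~Potier, and \cite[Chap.~8]{HL}) then provides, for a power $k$ depending only on the fixed numerical invariants of $(C, c|_C)$, an $\SL(V)$-invariant section of $\lambda_{\mathscr{E}^{(n-1)}}\bigl(u_0(c|_C)\bigr)^{\otimes k}$ that does not vanish at the point determined by the semistable sheaf $F_0|_C$.

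To transport such a section up to $S$, apply Corollary~\ref{flatness2} with the subvarieties $S_i$ chosen to comprise the irreducible components of the iterated singular loci of $S$ appearing in Lemma~\ref{sections}, together with the orbit closure $\overline{\SL(V)\acts s_0}$; by Remark~\ref{rem:orbitsmaybeamongtheSi} the resulting subvariety $T$ then avoids $s_0$ and meets each stratum $S_i$ in codimension at least two. Choosing $C$ generically, inside the intersection of the open sets $U_l \subset |k_lH_l|$ furnished by Corollary~\ref{flatness2} with the dense open locus on which the restriction theorem applies, we obtain over $S \setminus T$ a flat restricted family $\mathscr{E}^{(n-1)}$ satisfying the flatness and exactness hypotheses of Proposition~\ref{prop:determinantcomparison}. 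That proposition (applied with $S\setminus T$ in place of $S$) yields a $G$-equivariant isomorphism
\[ \lambda_{\mathscr{E}^{(n-1)}}\bigl(u_0(c|_C)\bigr)^{\otimes d_1\cdots d_{n-1}} \cong \mathscr{L}_{n-1}^{\otimes d_1\cdots d_{n-1}} \]
over $S \setminus T$. Raising the curve section to the power $d_1\cdots d_{n-1}$ and pulling it back across this isomorphism produces an $\SL(V)$-invariant section of $\mathscr{L}_{n-1}^{\otimes \nu}$, with $\nu \definiere k\, d_1\cdots d_{n-1}$ independent of $s_0$, defined over $S \setminus T$ and non-vanishing at $s_0$. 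Finally, since $S$ is weakly normal and $T$ meets each relevant stratum in codimension at least two, Lemma~\ref{sections} extends this section to an $\SL(V)$-invariant section over all of $S$, which still does not vanish at $s_0 \notin T$. As $\nu$ is uniform and $s_0$ arbitrary, $\mathscr{L}_{n-1}^{\otimes \nu}$ is generated by invariant sections.

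The main obstacle is the interface between the higher-dimensional family and its one-dimensional restriction. Unlike the surface case of Le~Potier and Li, restriction to a complete intersection curve destroys flatness along a locus that must be controlled; the whole point of working over the weakly normal space $S = (R^{\mu ss}_{red})^{wn}$ is to confine this locus to the codimension-two set $T$ and then recover the extended section through $c$-holomorphicity. The delicate step is therefore to arrange \emph{simultaneously}, for a single generic curve $C$, that (i) $F_0|_C$ is genuinely semistable (so that the curve-level section exists and is non-zero at the relevant point), (ii) the flatness and exactness hypotheses of Proposition~\ref{prop:determinantcomparison} hold over $S \setminus T$, and (iii) $T$ is compatible with the stratification required by Lemma~\ref{sections}; these demands are reconciled precisely by the boundedness of the family together with the genericity statements in Proposition~\ref{prop:slopeMR} and Corollary~\ref{flatness2}.
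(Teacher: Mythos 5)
Your architecture coincides with the paper's proof of Theorem~\ref{thm:semiampleness} in all essentials: fix $s_0$, apply Proposition~\ref{prop:slopeMR} so that $\mathscr{F}_{s_0}$ restricts semistably to a general complete intersection curve $X^{(n-1)}$, excise a non-flat locus $T$ of codimension two avoiding $s_0$ via Corollary~\ref{flatness2} and Remark~\ref{rem:orbitsmaybeamongtheSi}, compare determinant bundles via Proposition~\ref{prop:determinantcomparison}, and extend across $T$ with Lemma~\ref{sections} on the weakly normal $S$. However, the step you compress into ``the classical theory of theta-sections provides an $\SL(V)$-invariant section of $\lambda_{\mathscr{E}^{(n-1)}}\bigl(u_0(c|_C)\bigr)^{\otimes k}$'' is precisely where the paper does its real work, and as phrased it contains a gap. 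First, there is a mismatch of parameter spaces and groups: the classical sections live on a Quot scheme $Q_{X^{(n-1)}}$ of quotients of $V^{(n-1)}\otimes\mathscr{O}_{X^{(n-1)}}(-m^{(n-1)})$ and are invariant under $\SL(V^{(n-1)})$, not $\SL(V)$, and there is no classifying morphism $S\setminus T\to Q_{X^{(n-1)}}$ because there is no global choice of basis of $H^0\bigl(\mathscr{F}^{(n-1)}_s(m^{(n-1)})\bigr)$. The paper bridges this by the $\SL(V)$-equivariant projective frame bundle $\pi\colon\widetilde S\to S$ and the induced $\SL(V^{(n-1)})$-invariant morphism $\Phi^{(n-1)}\colon\widetilde S\to Q_{X^{(n-1)}}$; it pulls back the GIT section, identifies the bundles through the chain of isomorphisms \eqref{eq:longlinebundleiso}, and descends through the good quotient $\pi$. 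Second, your wording ``the parameter space of semistable sheaves'' hides a genuine issue: for the \emph{fixed} curve $X^{(n-1)}$, only $\mathscr{F}_{s_0}|_{X^{(n-1)}}$ is guaranteed semistable, not $\mathscr{F}_s|_{X^{(n-1)}}$ for all $s\in S\setminus T$. A classifying map to a moduli space of semistable sheaves on the curve would therefore not be defined on all of $S\setminus T$, and the locus where it fails need not have codimension two, so Lemma~\ref{sections} would not apply. It is essential that the section be pulled back from the \emph{full} Quot scheme, where invariant sections are globally defined and simply vanish at points with unstable restriction (cf.\ Lemma~\ref{lem:GIT=semistable}); your determinant-comparison isomorphism alone transports line bundles, not sections.

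A smaller point concerns uniformity of $\nu$. You assert at the outset a power $k$ depending only on the numerical invariants of $(X^{(n-1)}, c|_{X^{(n-1)}})$; for a single Quot scheme this follows from finite generation of the invariant ring, but your curve varies with $s_0$, so strictly one would need a boundedness statement for the resulting family of Quot schemes. The paper sidesteps this entirely: for each $s$ it produces an invariant section of \emph{some} power nonvanishing at $s$, and then obtains one uniform $\nu$ a posteriori by Noetherian induction, using quasi-compactness of $S$ and raising sections to suitable powers. Your argument is repaired the same way, so this defect is cosmetic; the frame-bundle descent and the use of the full Quot scheme described above are the substantive omissions.
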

\begin{proof} Let $s \in S$ be a given point in $S$, which we will fix for the rest of the proof. We will show that there exists an invariant section in some tensor power of $\mathscr{L}_{n-1}$ that does not vanish at $s$. The claim then follows by Noetherian induction.

 By the Semistable Restriction Theorem, Proposition~\ref{prop:slopeMR}, there exist positive natural numbers $a_1, a_2, \ldots , a_{n-1} \in \mathbb{N}$ such that 
\begin{enumerate}
\item[(i)] $a_1H_1, a_2H_2, \ldots , a_{n-1}H_{n-1}$ are very ample, and 
\item[(ii)] for any general (smooth) complete intersection curve $X^{(n-1)}$ of elements in $|a_1 H_1|, |a_2 H_2|,\ldots ,|a_{n-1}H_{n-1}|$
 the restriction $\mathscr{F}_s|_{X^{(n-1)}}$ is semistable.
\end{enumerate}
\begin{rem}\label{rem:semistabilityoncurve}
Concerning the second point note that on the curve $X^{(n-1)}$ the notion of ``semistability'' is well-defined without fixing a further parameter.
\end{rem}

Hence, it follows from Corollary~\ref{flatness2} (applied to the very ample line bundles $a_1H_1$, \ldots , $a_{n-1}H_{n-1}$) and Lemma~\ref{sections}
 that in order to prove our claim without loss of generality we may assume to be in the following setup: 
\begin{Setup}
There exists a complete intersection curve $X^{(n-1)}$ obtained by intersecting general members $X_i$ of $|a_iH_i|$ such that the following holds: if we set $\mathscr{F}^{(l)} \definiere  \mathscr{F}|_{S \times X^{(l)}}$, then for all $l\in \{1, \ldots, n-1\}$ 
\begin{enumerate}
 \item[(i)] the family $\mathscr{F}^{(l)}$ is $S$-flat, and 
 \item[(ii)] the sequence
\begin{align}
0\to \mathscr{F}^{(l-1)}(-(S\times X^{(l)}))&\to \mathscr{F}^{(l-1)} \to \mathscr{F}^{(l)}\to 0.
 \end{align}
of $S$-flat sheaves is exact.
\end{enumerate}
\end{Setup}
For this, we have replaced $S$ by $S\setminus T$, where $T$ is the ``non-flatness'' locus from Corollary \ref{flatness2},  
and we note that we may assume $s \notin T$, cf.~Remark~\ref{rem:orbitsmaybeamongtheSi}.

For any class $c \in K(X)_{num}$ let $c^{(n-1)} = \imath_{X^{(n-1)}}^*(c)$ denote the restriction to $X^{(n-1)}$, 
and let $P^{(n-1)}= P(c^{(n-1)})$ be the associated Hilbert polynomial with respect to the ample line bundle $\mathscr{O}_{X^{(n-1)}}(1) \definiere \mathscr{O}_X(1)|_{X^{(n-1)}}$.  Let $m^{(n-1)}$ be a large positive integer, set $V^{(n-1)} \definiere \mathbb{C}^{P^{(n-1)}(m^{(n-1)})}$, $\mathscr{H}^{(n-1)}$ $\definiere V^{(n-1)}\otimes \mathscr{O}_{X^{(n-1)}}(-m^{(n-1)})$, and let $Q_{X^{(n-1)}}$ be the closed subscheme of $\mathrm{Quot}(\mathscr{H}^{(n-1)}, P^{(n-1)})$ 
parametrising quotients with determinant $\Lambda|_{X^{(n-1)}}$. Denote by  $\mathscr{O}_{Q_{X^{(n-1)}}} \otimes \mathscr{H}^{(n-1)} \to \widehat{\mathscr{F}}^{(n-1)} $ the universal quotient sheaf on $Q_{X^{(n-1)}}$, and set
\begin{equation}\label{eq:L0definition}
 \mathscr{L}_0^{(n-1)} \definiere  \lambda_{\widehat{\mathscr{F}}^{(n-1)}}\bigl(u_0(c^{(n-1)})\bigr).
\end{equation}

Note that on the curve $X^{(n-1)}$ slope-semistability and Gieseker-semistability coincide, cf.~Remark~\ref{rem:semistabilityoncurve}. As in \cite[p.~223]{HL}, the following lemma thus
 follows from the construction of the moduli space for semistable sheaves on the curve $X^{(n-1)}$.
\begin{lemma}\label{lem:GIT=semistable}
After increasing $m^{(n-1)}$ if necessary, the following holds: 
\begin{enumerate}
\item For a given point $[q: \mathscr{H}^{(n-1)} \to E] \in Q_{X^{(n-1)}}$ the following assertions are equivalent:
\begin{enumerate}
 \item[(i)] The quotient $E$ is a semistable sheaf and the induced map $V^{(n-1)} \to H^0(X, E(m^{(n-1)}))$ is an isomorphism.
 \item[(ii)] The point $[q] \in Q_{X^{(n-1)}}$ is GIT-semistable with respect to the natural $\SL(V^{(n-1)})$-linearisation of $\mathscr{L}_0^{(n-1)}$.
 \item[(iii)] There exists a positive integer $\nu \in \mathbb{N}$ and an $\SL(V^{(n-1)})$-invariant section 
$ \sigma \in H^0\bigl(Q_{X^{(n-1)}}, \mathscr{L}_0^{(n-1)}\bigr)$ such that $\sigma([q]) \neq 0$. 
\end{enumerate}
\item Two points $[q_i\colon \mathscr{H}^{(n-1)} \to E_i] \in Q_{X^{(n-1)}}$, $i=1,2$, are separated by invariant sections in some tensor power of  $\mathscr{L}_0^{(n-1)}$ if and only if either one of them is semistable but the other is not, or both points are semistable but $E_1$ and $E_2$ are not $S$-equivalent.
\end{enumerate}
\end{lemma}
In addition to assertions (1) and (2) of the previous lemma, by increasing $m^{(n-1)}$ further if necessary, we may assume that for each $s \in S$ the restricted sheaf $\mathscr{F}^{(n-1)}_s$ is $m^{(n-1)}$-regular with respect to $\mathscr{O}_{X^{(n-1)}}(1)$. Consequently, each such sheaf is globally generated and defines a closed point in $Q_{X^{(n-1)}}$ with the additional property that the induced map $V^{(n-1)} \to H^0\bigl(X^{(n-1)}, \mathscr{F}^{(n-1)}_s(m^{(n-1)}) \bigr)$ is an isomorphism.

If we denote the projection from $S \times X^{(n-1)}$ to the first factor by $p$, 
the push-forward $p_*\bigl(\mathscr{F}^{(n-1)} (m^{(n-1)}) \bigr)$  
is a locally free 
$\SL(V)$-linearised  
$\mathscr{O}_{S}$-sheaf of rank $P^{(n-1)}(m^{(n-1)})$ on $S$. The associated $\SL(V)$-equivariant 
projective frame bundle $\pi \colon \widetilde{S} \to S$ parametrises a quotient
$$\mathscr{O}_{\widetilde{S}} \otimes \mathscr{H}^{(n-1)} \to \pi^*\mathscr{F}^{(n-1)} \otimes \mathscr{O}_\pi(1),$$
which induces an $\SL(V^{(n-1)})$-invariant morphism $\Phi^{(n-1)}\colon \widetilde S \to Q_{X^{(n-1)}}$ that is compatible with the $\SL(V)$-action on $\widetilde S$. We summarise our situation in the following diagram:
\begin{equation}
\label{diagr}
\begin{gathered}
\begin{xymatrix}{
\widetilde{S} \ar[rr]^>>>>>>>>{\Phi^{(n-1)}} \ar[d]_\pi &  & Q_{X^{(n-1)}} \\
S. & & }
  \end{xymatrix}
\end{gathered}
\end{equation}

By Proposition~\ref{prop:determinantcomparison}, especially by equation \eqref{eq:determinantcomparison}, there exists positive integers $k_0$ and $k_{n-1}$ such that
\begin{equation}\label{eq:shortdeterminantcomparison}
 \lambda_{\mathscr{F}^{(n-1)}}\bigl(u_0(c^{(n-1)})\bigr)^{\otimes k_0} \cong \lambda_{\mathscr{F}}\bigl(u_{n-1}(c) \bigr)^{\otimes k_{n-1}},
\end{equation}
cf.~the discussion at the end of Section~\ref{subsubsection:SetupClasses}.

With these preparations in place, we compute:
\begin{equation}\label{eq:longlinebundleiso}
\begin{aligned}
 &(\Phi^{(n-1)})^*(\mathscr{L}_0^{(n-1)})^{\otimes k_0} \\
 =\; &(\Phi^{(n-1)})^*(\lambda_{\widehat{\mathscr{F}}^{(n-1)}}(u_0(c^{(n-1)})))^{\otimes k_0}&&\text{by definition, see eq.~\eqref{eq:L0definition}}\\
			      \cong\; &\lambda_{\pi^*\mathscr{F}^{(n-1)} \otimes \mathscr{O}_\pi(1)}\bigl(u_0(c^{(n-1)})\bigr)^{\otimes k_0}&&\text{by \cite[Lem.~8.1.2 ii)]{HL}}\\
			      \cong\; &\lambda_{\pi^*\mathscr{F}^{(n-1)}}\bigl(u_0(c^{(n-1)})\bigr)^{\otimes k_0}&&\text{by \cite[Lem.~8.1.2 ii)]{HL}}\\
			      \cong\; &\pi^*\lambda_{\mathscr{F}^{(n-1)}}\bigl(u_0(c^{(n-1)})\bigr)^{\otimes k_0}&&\text{by \cite[Lem.~8.1.2 iv)]{HL}}\\
			      \cong\; &\pi^*\lambda_{\mathscr{F}}\bigl(u_{n-1}(c) \bigr)^{\otimes k_{n-1}}&&\text{by eq.~\eqref{eq:shortdeterminantcomparison}}\\
			      =\; &\pi^*(\mathscr{L}_{n-1})^{\otimes k_{n-1}} &&\text{by definition, see eq.~\eqref{eq:L2definition}}.
\end{aligned}
\end{equation}

Now, let $\sigma$ be an $\SL(V^{(n-1)})$-invariant section in $(\mathscr{L}_0^{(n-1)})^{\otimes \nu k_0}$ that does not vanish at a given point of the form $[q\colon \mathscr{H}^{(n-1)}_t \to \mathscr{F}_t|_{X^{(n-1)}}]$. Since $\Phi^{(n-1)}$ is $\SL(V)$-invariant, the pullback $(\Phi^{(n-1)})^*(\sigma)$ is an $\SL(V^{(n-1)})\times \SL(V)$-invariant section in $(\Phi^{(n-1)})^*(\mathscr{L}_0^{(n-1)})^{\otimes k_0} \cong \pi^*\lambda_{\mathscr{F}}(\widehat u_2)^{\otimes \nu k_{n-1}}$. Since $\pi$ is a good quotient of $\widetilde S$ by the $\SL(V^{(n-1)})$-action, the isomorphism \eqref{eq:longlinebundleiso} implies that $(\Phi^{(n-1)})^*(\sigma)$ descends to a section $l_{\mathscr{F}}(\sigma) \in H^0(S, (\mathscr{L}_2)^{\otimes \nu k_{n-1}})^{\SL(V)}$ that does not vanish at $t \in S$.

Finally, recall that we want to produce a section in $\mathscr{L}_{n-1}$ that does not vanish at our given point $s \in S$.
As $\mathscr{F}_s|_{X^{(n-1)}}$ is semistable, and as the induced map $V^{(n-1)} \to H^0\bigl(X^{(n-1)},\, \mathscr{F}^{(n-1)}_s(m^{(n-1)}) \bigr)$ is an isomorphism, owing to Lemma~\ref{lem:GIT=semistable}(1) there exists a positive integer $\nu \in \mathbb{N}$ and an invariant section $\sigma \in H^0\bigl(Q_{X^{(n-1)}}, (\mathscr{L}_0^{(n-1)})^{\otimes \nu k_0}\bigr)^{\SL(V^{(n-1)})}$ such that the induced section $l_{\mathscr{F}} (\sigma)\in H^0\bigl(S, (\mathscr{L}_{n-1})^{\otimes \nu k_{n-1}}\bigr)^{\SL(V)}$ fulfils $l_{\mathscr{F}}(\sigma)(s) \neq 0$. This completes the proof of Theorem~\ref{thm:semiampleness}.
\end{proof}

\section{A projective moduli space for slope-semistable sheaves} \label{sect:construction}

In this section we will carry out the construction of the ``moduli space'' of $\mu$-semistable sheaves. In Section~\ref{subsect:Langton} we shortly discuss a generalisation of Langton's Theorem to our setup, before giving the construction of the desired moduli space $M^{\mu ss}$ in Section~\ref{subsect:construction}. Afterwards, the universal properties of $M^{\mu ss}$ are established in Section~\ref{subsect:universalproperties}.

We continue to use the notation introduced in the previous section; see especially Section~\ref{setup}. 

\subsection{Compactness via Langton's Theorem}\label{subsect:Langton}
The key to proving the compactness of our yet to be constructed moduli spaces lies in the following generalisation of Langton's Theorem to the case of multipolarisations:
\begin{thm}[Langton's Theorem]\label{thm:Langton}
 Let $\mathcal{R} \supset k$ be a discrete valuation ring with field of fractions $K$, 
let $i\colon X \times \mathrm{Spec}\,K \to X \times \mathrm{Spec}\,R$ be the inclusion of the generic fibre, 
and let $j\colon X_k \to X \times \mathrm{Spec}\,R$ be the inclusion of the closed fibre in $X \times \mathrm{Spec}\,R$ over $\mathrm{Spec}\,R$. 
Then, for any $(H_1,...,H_{n-1})$-semistable torsion-free coherent sheaf $E_K$ over $X \times \mathrm{Spec}\,K$, there exists a torsion-free coherent sheaf over $X \times \mathrm{Spec}\,R$ such that $i^*E \cong E_K$ and such that $j^*E$ is torsion-free and semistable.
\end{thm}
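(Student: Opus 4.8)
The plan is to adapt the classical Langton argument (see \cite[Sect.~2.B]{HL}) to the multipolarisation setting, relying on the crucial fact that the slope with respect to $(H_1,\dots,H_{n-1})$ is computed by intersecting $c_1(F)$ with a \emph{fixed} cohomology class $\alpha = H_1\cdots H_{n-1} \in N_1(X)$; this linear dependence on $c_1$ is all that the original proof actually uses, so the strategy carries over essentially verbatim once the right numerical invariant is identified.

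First I would choose an arbitrary torsion-free extension $E^0$ of $E_K$ over $X \times \mathrm{Spec}\,R$ (such an extension exists by flatness and properness arguments on the Quot-scheme). If the closed fibre $j^*E^0$ is already $(H_1,\dots,H_{n-1})$-semistable, we are done. Otherwise, let $B^0 \subset j^*E^0$ be the maximal destabilising subsheaf, with quotient $Q^0 = j^*E^0/B^0$, and define the elementary modification $E^1 \subset E^0$ as the kernel of the composition $E^0 \to j^*E^0 \to Q^0$. One checks that $E^1$ is again $R$-flat with generic fibre $E_K$, and that $j^*E^1$ sits in an exact sequence relating it to $B^0$ and $Q^0$. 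Iterating produces a descending chain $E^0 \supset E^1 \supset E^2 \supset \cdots$ of torsion-free sheaves, all with generic fibre $E_K$.

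The heart of the argument is to show this chain terminates, i.e.\ that after finitely many steps the closed fibre becomes semistable. Here I would track the maximal destabilising subsheaves $B^k \subset j^*E^k$ and their $(H_1,\dots,H_{n-1})$-slopes. Following Langton, one argues that if the procedure did not stop, the images of the $B^k$ in a fixed fibre would form an ascending or stabilising chain of subsheaves whose slopes are bounded and eventually constant; boundedness of the relevant family (Proposition~\ref{prop:slopeboundedness}) guarantees that the destabilising data lives in a bounded family, so the discrete slope invariant $\mu_\alpha(B^k)$ can only decrease finitely often before stabilising. Once the maximal destabilising slope stabilises, a separate analysis of the limiting subsheaf (which would have to descend to a subsheaf of the generic fibre $E_K$ contradicting its semistability) forces the process to halt with a semistable closed fibre.

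The main obstacle I anticipate is the termination/boundedness step: in the surface case one uses that $\mu_{\max}(j^*E^k)$ is non-increasing and the intermediate quotients have controlled discriminants, invoking the Bogomolov-type inequality and boundedness of semistable sheaves. In the higher-dimensional multipolarisation setting I must replace these by the corresponding boundedness result available to us, namely Proposition~\ref{prop:slopeboundedness}, and verify that the family of possible destabilising subsheaves $B^k$ arising along the chain is indeed bounded. The key point to verify carefully is that the slope $\mu_\alpha$ takes values in a discrete subset of $\mathbb{Q}$ once rank and $c_1$ are constrained, so that a strictly monotone sequence of slopes cannot be infinite; this is where the linearity of $\mu_\alpha$ in $c_1$ together with boundedness does the real work, and it is the place where the argument, though conceptually parallel to Langton's, requires the most care to transplant.
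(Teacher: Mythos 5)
Your proposal is correct and follows essentially the same route as the paper, whose entire proof is the observation that Langton's original argument \cite{Langton} works literally for multipolarisations because the degrees $c_1(\cdot)\cdot H_1\cdots H_{n-1}$ can be realised as coefficients of suitable Hilbert polynomials (cf.~\cite[p.296]{Kleiman}) and are therefore integral, which is precisely the discreteness of $\mu_\alpha$ that you identify as the crux. The one cosmetic difference is your appeal to Proposition~\ref{prop:slopeboundedness} in the termination step, which is superfluous: since the $H_i$ are integral ample classes and $c_1(F)\in H^2(X,\mathbb{Z})$, the slopes lie in $\frac{1}{r!}\mathbb{Z}$ automatically, and Langton's lexicographic descent argument needs nothing more.
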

\begin{proof}
The proof of Langton's completeness result \cite{Langton} (for slope-functions defined by a single integral ample divisor) literally works for slope with respect to multipolarisations. The key point is to note that degrees with respect to multipolarisations also can be seen as coefficients of appropriate terms in some Hilbert polynomials; cf.~\cite[p.296]{Kleiman}.
\end{proof}

By replacing Langton's original theorem with Theorem~\ref{thm:Langton}, the following result can now be obtained using the argument of \cite[Prop. 8.2.5]{HL}.
\begin{prop}\label{prop:completeimage}
Let $Z \subset S$ be any $\SL(V)$-invariant closed subvariety. If $T$ is a separated scheme of finite type over $\mathbb{C}$, and if $\varphi\colon Z \to T$ is any $\SL(V)$-invariant morphism, then the image $\varphi(Z) \subset T$ is complete. In particular, any $\SL(V)$-invariant function on $S$ is constant.
\end{prop}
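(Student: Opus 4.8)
The plan is to deduce Proposition~\ref{prop:completeimage} directly from the semiampleness result, Theorem~\ref{thm:semiampleness}, exactly following the template of \cite[Prop.~8.2.5]{HL} but with the multipolarisation version of Langton's theorem substituted at the appropriate point. First I would recall that by Theorem~\ref{thm:semiampleness} some power $\mathscr{L}_{n-1}^{\otimes \nu}$ is globally generated over $S$ by $\SL(V)$-invariant sections. These invariant sections furnish an $\SL(V)$-invariant morphism from $S$ (and hence from any invariant closed $Z \subset S$) to a projective space, and one checks that separating orbits and detecting completeness can be reduced to statements about this morphism and the line bundle $\mathscr{L}_{n-1}$.

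The key step is the valuative criterion for properness. Given an invariant morphism $\varphi\colon Z \to T$ with $T$ separated of finite type, to show that $\varphi(Z)$ is complete it suffices to verify the valuative criterion: for a discrete valuation ring $\mathcal{R}$ with fraction field $K$ and a map $\mathrm{Spec}\,K \to \varphi(Z)$, one must extend it over $\mathrm{Spec}\,\mathcal{R}$ after a finite base change. A $K$-point of $\varphi(Z)$ lifts (after base change) to a $K$-point of $Z$, i.e.\ to a quotient $[\mathscr{H}_K \to F_K]$ with $F_K$ a $\mu$-semistable sheaf on $X \times \mathrm{Spec}\,K$. This is precisely where Theorem~\ref{thm:Langton} enters: it produces a torsion-free $(H_1,\ldots,H_{n-1})$-semistable extension $E$ over $X \times \mathrm{Spec}\,\mathcal{R}$ whose closed fibre $j^*E$ is again torsion-free and semistable. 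After the usual adjustments (twisting so that the relevant cohomology is generated by global sections, and choosing a basis of $H^0$ over the closed point, possibly after a further finite extension of $\mathcal{R}$), this extension defines an $\mathcal{R}$-point of $R^{\mu ss}$, hence of $S$ after passing to the weak normalisation, whose image under $\varphi$ extends the original $K$-point into $\varphi(Z)$. This verifies the valuative criterion, so $\varphi(Z)$ is complete.

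For the final assertion, that every $\SL(V)$-invariant regular function on $S$ is constant, I would apply the completeness statement to the map $\varphi$ given by such a function, viewed as an invariant morphism $S \to \mathbb{A}^1 \hookrightarrow \mathbb{P}^1$ with target the separated scheme $\mathbb{A}^1$. Its image is both complete and contained in the affine line $\mathbb{A}^1$, hence is a single point; thus the function is constant on $S$.

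The main obstacle I expect lies in the lifting step within the valuative criterion, namely producing an honest $\mathcal{R}$-point of $S = (R^{\mu ss}_{red})^{wn}$ from the Langton extension $E$. Two subtleties must be handled: first, the passage to the weak normalisation $S$ of $R^{\mu ss}_{red}$ means one must lift an $\mathcal{R}$-valued point of $R^{\mu ss}$ through the weak normalisation morphism, which is legitimate because $\mathrm{Spec}\,\mathcal{R}$ is regular (in particular weakly normal) so the universal property of $\eta\colon S \to R^{\mu ss}_{red}$ from Proposition~\ref{propnot:weaknormalisation} applies after reducing. Second, one must ensure the $m$-regularity and the isomorphism $V \cong H^0(j^*E(m))$ hold for the closed fibre as well, which, as in the surface case, is guaranteed after increasing $m$ using boundedness (Proposition~\ref{prop:slopeboundedness}). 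Granting these standard adjustments, which are identical to the arguments underlying \cite[Prop.~8.2.5]{HL}, the completeness of $\varphi(Z)$ follows.
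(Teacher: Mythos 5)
Your proof is correct and follows essentially the same route as the paper, which likewise obtains the statement by running the argument of \cite[Prop.~8.2.5]{HL} verbatim with the multipolarisation version of Langton's theorem (Theorem~\ref{thm:Langton}) substituted, together with boundedness for the $m$-regularity adjustments and the universal property of the weak normalisation (applicable since $\mathrm{Spec}\,\mathcal{R}$ is regular, hence weakly normal) to lift the resulting $\mathcal{R}$-point to $S$. Note only that your opening appeal to the semiampleness result, Theorem~\ref{thm:semiampleness}, is superfluous: neither your valuative-criterion argument nor the paper's proof uses it.
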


\subsection{Construction of the moduli space}\label{subsect:construction}
We have seen in Theorem~\ref{thm:semiampleness} that for $\nu \in \N$ big enough, the line bundle $\mathscr{L}_{n-1}^{\otimes \nu}$ is generated by $\SL(V)$-invariant sections. Hence, it is a natural idea to construct the moduli space as an image of $S$ under the map given by invariant sections of $\mathscr{L}_{n-1}^{\otimes \nu}$ for $\nu \gg 0$.

For this, we set $W_\nu \definiere  H^0(S, \mathscr{L}_{n-1}^{\otimes \nu})^{\SL(V)}$. Since $S$ is Noetherian, for every $\nu \in \N$ such that $W_\nu$ generates $\mathscr{L}_{n-1}^{\otimes \nu}$ over $S$, there exists a finite-dimensional $\C$-vector subspace $\hat W_\nu$ of $W_\nu$ that still generates $\mathscr{L}_{n-1}^{\otimes \nu}$ over $S$. We consider the induced $\SL(V)$-invariant morphism $\varphi_{\hat{W}_\nu}\colon S \to \mathbb{P}(\hat{W}_\nu^*)$ and set $M_{\hat{W}_\nu}\definiere  \varphi_{\hat{W}_\nu}(S)$. One now considers the projective varieties $M_{\hat{W}_\nu}$ for increasing values of $\nu$. Using exactly the same arguments as in the proof of \cite[Prop.~8.2.6]{HL}, which uses the notation introduced in the previous paragraph, we obtain the following result. 
\begin{prop}[Finite generation]\label{prop:fingen}
 There exists an integer $N > 0$ such that the graded ring $\bigoplus_{k \geq 0}W_{kN}$ is generated over $W_0 = \mathbb{C}$ by finitely many elements of degree one. 
\end{prop}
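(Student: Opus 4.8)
The plan is to follow the strategy of \cite[Prop.~8.2.6]{HL}, reducing the finite-generation statement to the classical fact that the section ring of an ample line bundle on a projective variety is finitely generated, with the stabilisation of the images $M_{\hat{W}_\nu}$ playing the central role. First I would record the basic geometry of the maps already introduced. For every $\nu$ for which $\mathscr{L}_{n-1}^{\otimes \nu}$ is generated by the finite-dimensional invariant subspace $\hat{W}_\nu \subset W_\nu$ (such $\nu$ exist by Theorem~\ref{thm:semiampleness}), the morphism $\varphi_{\hat{W}_\nu}\colon S \to \mathbb{P}(\hat{W}_\nu^*)$ is $\SL(V)$-invariant, so by Proposition~\ref{prop:completeimage} its image $M_{\hat{W}_\nu}$ is complete, hence a projective variety, and $\varphi_{\hat{W}_\nu}^*\mathscr{O}_{\mathbb{P}(\hat{W}_\nu^*)}(1) \cong \mathscr{L}_{n-1}^{\otimes \nu}$. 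Writing $\mathscr{A}_\nu$ for the ample line bundle $\mathscr{O}_{M_{\hat{W}_\nu}}(1)$, pullback along $\varphi_{\hat{W}_\nu}$ yields injections $H^0\bigl(M_{\hat{W}_\nu}, \mathscr{A}_\nu^{\otimes k}\bigr) \hookrightarrow H^0\bigl(S, \mathscr{L}_{n-1}^{\otimes \nu k}\bigr)^{\SL(V)} = W_{\nu k}$ for all $k \geq 0$. I emphasise that it is precisely the quasi-projectivity of $S$ (the weak normalisation of a locally closed subscheme of a Quot-scheme) that makes the completeness input of Proposition~\ref{prop:completeimage}, ultimately resting on Langton's Theorem~\ref{thm:Langton}, indispensable: without it the images $M_{\hat{W}_\nu}$ need not be closed.

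Next I would set up the inverse system of images. For $\nu \mid \nu'$, say $\nu' = m\nu$, multiplying sections sends $\mathrm{Sym}^m \hat{W}_\nu$ into $W_{\nu'}$, so after a compatible choice of the $\hat{W}$'s (taking $\hat{W}_{\nu'}$ to contain the image of $\mathrm{Sym}^m\hat{W}_\nu$) the map $\varphi_{\hat{W}_{\nu'}}$ refines $\varphi_{\hat{W}_\nu}$ and one obtains surjective morphisms $M_{\hat{W}_{\nu'}} \to M_{\hat{W}_\nu}$. Since all $M_{\hat{W}_\nu}$ are projective of dimension at most $\dim S$, the dimensions stabilise, and a standard Noetherian argument comparing the subrings $\bigoplus_k \mathrm{im}\bigl(H^0(M_{\hat{W}_\nu}, \mathscr{A}_\nu^{\otimes k}) \to W_{\nu k}\bigr)$ shows that for some $N > 0$ the transition maps $M_{\hat{W}_{kN}} \to M_{\hat{W}_N}$ are isomorphisms for every $k \geq 1$. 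Set $M := M_{\hat{W}_N}$ and $\mathscr{A} := \mathscr{A}_N$; along the way one checks that the injections above become equalities $W_{kN} = H^0\bigl(M, \mathscr{A}^{\otimes k}\bigr)$ for all $k \geq 0$.

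Finally, since $\mathscr{A}$ is ample on the projective variety $M$, the section ring $\bigoplus_{k\geq 0} H^0\bigl(M, \mathscr{A}^{\otimes k}\bigr) = \bigoplus_{k \geq 0} W_{kN}$ is a finitely generated $\mathbb{C}$-algebra. Replacing $N$ by a sufficiently large multiple so that $\mathscr{A}^{\otimes d}$ is very ample and projectively normal for $d \gg 0$, this ring becomes generated over $W_0 = \mathbb{C}$ by its elements of degree one, which is the assertion. The main obstacle is the stabilisation step: one must make the choices of generating subspaces $\hat{W}_\nu$ compatible enough to produce the transition morphisms, and then run the dimension-plus-Noetherianity argument that forces the projective images --- and simultaneously the invariant-section spaces $W_{kN}$ --- to coincide with those of a single ample bundle on $M$. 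Everything else is either recorded above or is a routine transcription of \cite[Prop.~8.2.6]{HL}.
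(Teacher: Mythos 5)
Your proposal is correct and takes essentially the same route as the paper: the paper's own proof of Proposition~\ref{prop:fingen} consists precisely of invoking the argument of \cite[Prop.~8.2.6]{HL} for the projective images $M_{\hat{W}_\nu}$ with increasing $\nu$, whose completeness is supplied by Proposition~\ref{prop:completeimage} (resting on Langton's Theorem~\ref{thm:Langton}), exactly as you note. Your sketch --- comparison morphisms between the $M_{\hat{W}_\nu}$ induced by multiplication of sections, stabilisation via a dimension-plus-Noetherianity argument, identification of $\bigoplus_{k\geq 0} W_{kN}$ with the section ring of an ample line bundle on the stabilised image, and a final Veronese step to achieve generation in degree one --- is a faithful transcription of that argument, with the genuinely delicate stabilisation step correctly identified as the core of \cite[Prop.~8.2.6]{HL}.
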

We are finally in a position to define the desired moduli space.
\begin{Def}[Moduli space for slope-semistable sheaves]\label{defi:modulispace}
 Let $N \geq 1$ be a natural number with the properties spelled out in Proposition~\ref{prop:fingen} above. Then, we define the polarised variety $(M^{\mu ss}, L)$ to be the projective variety
 $$M^{\mu ss}\definiere  M^{\mu ss}(c, \Lambda) \definiere \mathrm{Proj}\Bigl( \bigoplus_{k \geq 0} H^0\bigl(S,\, \mathscr{L}_{n-1}^{\otimes kN}\bigr)^{\SL(V)}\Bigr) ,$$ together with the ample line bundle $L\definiere \mathscr{O}_{M^{\mu ss}}(1)$. Moreover, we let $\Phi\colon S \to M^{\mu ss}$ be the induced $\SL(V)$-invariant morphism with $\Phi^*(L)= \mathscr{L}_{n-1}^{\otimes N}$.
\end{Def}
\subsection{Universal properties of the moduli space}\label{subsect:universalproperties}
Although the projective variety $M^{\mu ss}$ is not a coarse moduli space in general, it nevertheless has certain universal properties, which are stated in the Main Theorem, and which we establish in the present section. 

Note that these universal properties do in fact differ from the ones stated on p.~226 of \cite{HL}, where the surface case is discussed. Note especially that it is necessary to add property (2) to the set of universal properties in order to obtain uniqueness of the resulting triple $(M^{\mu ss}, \mathscr{O}_{M^{\mu ss}(1)}, N)$.

We start by formulating and proving a result that is slightly weaker than the Main Theorem. Subsequently, we will show that by choosing the ``correct`` polarising line bundle for $M^{\mu ss}$, we obtain the universal properties stated in the Main Theorem. 

\begin{prop}\label{prop:universal_1}
 Let $\underline{M}^{\mu ss}$ denote the functor that associates to each weakly normal variety $B$ the set of isomorphism classes of $B$-flat families of $\mu$-semi\-stable sheaves of class $c$ and determinant $\Lambda$. Then, there exists a natural transformation from $\underline{M}^{\mu ss}$ to $\underline{\Hom}(\cdot , M^{\mu ss})$, mapping a family $\mathscr{E}$ to a classifying morphism $\Phi_\mathscr{E}$, with the following properties:
\begin{enumerate}
 \item For every $B$-flat family $\mathscr{E}$ of $\mu$-semistable sheaves of class $c$ and determinant $\Lambda$ with induced classifying morphism $\Phi_{\mathscr{E}}\colon B \to M^{\mu ss}$ we have
\begin{equation}\label{eq:functoriallinebundlepullback}
 \Phi_{\mathscr{E}}^* (L) \cong \lambda_{\mathscr{E}}\bigl( {u}_{n-1}(c)\bigr)^{\otimes N},
\end{equation}
where $\lambda_{\mathscr{E}}\bigl( {u}_{n-1}(c)\bigr)$ is the determinant line bundle on $B$ induced by $\mathscr{E}$ and $ {u}_{n-1} (c)$; cf.~Section~\ref{subsect:Grothendieckgroup}. 
\item For any other triple of a natural number $N'$, a projective variety $M'$, and an ample line bundle $L'$ fulfilling the conditions spelled out in (1), there exist a natural number $d \in \mathbb{N}^{>0}$, and a uniquely determined morphism $\psi\colon M^{\mu ss} \to M'$ such that $\psi^*(L')^{\otimes dN} \cong L^{\otimes dN'}$. 
\end{enumerate}
\end{prop}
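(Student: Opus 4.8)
The plan is to construct the natural transformation first and then establish the two properties separately, with property (2) as the genuine content. For the natural transformation, given a weakly normal variety $B$ and a $B$-flat family $\mathscr{E}$ of $\mu$-semistable sheaves of class $c$ and determinant $\Lambda$, I would rigidify it in the standard way: after twisting, the push-forward $p_*\bigl(\mathscr{E}(m)\bigr)$ is locally free of rank $P(m)$, so the associated $\SL(V)$-frame bundle $\tilde B \to B$ carries a tautological quotient $\mathscr{O}_{\tilde B}\otimes\mathscr{H}\to (\text{pullback of }\mathscr{E})\otimes\mathscr{O}(1)$. This produces an $\SL(V)$-invariant morphism $\tilde B \to R^{\mu ss}$, hence (after passing to reductions and weak normalisations, using that $B$ is weakly normal and the universal property from Proposition and Notation~\ref{propnot:weaknormalisation}) a lift $\tilde B \to S$. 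Composing with $\Phi\colon S\to M^{\mu ss}$ gives an $\SL(V)$-invariant map $\tilde B\to M^{\mu ss}$; since $M^{\mu ss}$ is a categorical quotient of $S$ and $\tilde B\to B$ is itself a good quotient by $\SL(V)$, this descends to the desired $\Phi_{\mathscr{E}}\colon B\to M^{\mu ss}$, independent of all the auxiliary choices by the usual invariance argument.

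For property (1), I would trace the line bundle through this construction. On $S$ we have $\Phi^*(L)=\mathscr{L}_{n-1}^{\otimes N}=\lambda_{\mathscr{F}}\bigl(u_{n-1}(c)\bigr)^{\otimes N}$ by Definition~\ref{defi:modulispace}. Pulling back along $\tilde B\to S$ and using the compatibility of the determinant construction $\lambda$ with base change and with twists by line bundles pulled back from the base (as in \cite[Lem.~8.1.2]{HL}, already invoked in the proof of Theorem~\ref{thm:semiampleness}), the class $u_{n-1}(c)$ on the curve-free level means no restriction is needed, so $(\tilde B\to M^{\mu ss})^*L\cong \pi^*\lambda_{\mathscr{E}}\bigl(u_{n-1}(c)\bigr)^{\otimes N}$ on $\tilde B$. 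Descending along the good quotient $\pi\colon\tilde B\to B$ then yields the stated isomorphism $\Phi_{\mathscr{E}}^*(L)\cong\lambda_{\mathscr{E}}\bigl(u_{n-1}(c)\bigr)^{\otimes N}$.

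The heart of the proof, and the main obstacle, is property (2): the universality statement comparing $(M^{\mu ss},L,N)$ with any competitor $(M',L',N')$. The strategy is to apply property (1) to the universal situation on $S$ itself. The tautological family $\mathscr{F}$ on $S$ induces, via the competitor's natural transformation, a classifying morphism $\Phi'\colon S\to M'$ with $\Phi'^*(L')\cong\lambda_{\mathscr{F}}\bigl(u_{n-1}(c)\bigr)^{\otimes N'}=\mathscr{L}_{n-1}^{\otimes N'}$, while $\Phi^*(L)=\mathscr{L}_{n-1}^{\otimes N}$. Thus the pulled-back sections of powers of $L'$ are $\SL(V)$-invariant sections of powers of $\mathscr{L}_{n-1}$; because $M^{\mu ss}$ was built as the $\mathrm{Proj}$ of the full ring of such invariant sections (Proposition~\ref{prop:fingen}), the morphism $\Phi'$ must factor through $\Phi$, giving $\psi\colon M^{\mu ss}\to M'$. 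The delicate points are matching the powers: one extracts $d$ so that $\psi^*(L')^{\otimes dN}\cong L^{\otimes dN'}$ by comparing the two descriptions of $\mathscr{L}_{n-1}^{\otimes dNN'}$ on $S$ and using that $L$ and $\mathscr{L}_{n-1}^{\otimes N}$ agree under $\Phi^*$. Uniqueness of $\psi$ follows because $\Phi\colon S\to M^{\mu ss}$ is dominant (indeed a good quotient in the relevant sense) and $M'$ is separated, so two morphisms agreeing after composition with $\Phi$ coincide. I expect the divisibility bookkeeping and verifying that $\Phi'$ genuinely factors — rather than merely that the section rings are related — to require the most care, relying essentially on the finite generation in Proposition~\ref{prop:fingen} and on Proposition~\ref{prop:completeimage} to control invariant functions.
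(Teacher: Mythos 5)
Your proposal follows essentially the same route as the paper's proof: rigidifying $\mathscr{E}$ via the frame bundle of $p_*\bigl(\mathscr{E}(m)\bigr)$, lifting to $S$ through the universal property of weak normalisation and descending along the good quotient to get $\Phi_{\mathscr{E}}$ (the paper settles the line-bundle comparison in (1) via injectivity of $\pi^*$ on $\mathrm{Pic}$, \cite[Ch.~1, \S3, Prop.~1.4]{MumfordGIT}), and, for (2), applying the competitor's property (1) to the universal family on $S$ to obtain $\Phi'\colon S \to M'$ and then comparing Veronese subrings of section rings, using generation in degree one, to induce $\psi$ on $\mathrm{Proj}$ exactly as in the paper. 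The one step you assert without argument --- that $\Phi'$ is $\SL(V)$-invariant, which is what makes the pulled-back sections land in the invariant ring defining $M^{\mu ss}$ --- is checked in the paper by a short orbit argument (the linearisation makes the universal family constant along each orbit, so the restricted classifying map is constant), a detail rather than a gap.
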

In the subsequent proofs we will use the following standard terminology.
\begin{Def}
 Let $X$ be a proper variety, and $L$ a line bundle on $X$. Then, the \emph{section ring} of $L$ is defined to be
\[R(X, L) \definiere  \bigoplus_{k \geq 0}H^0\bigl(X, L^{\otimes k} \bigr).\]
For any $d \geq 2$, we define the \emph{$d$-th Veronese subring} of $R(X,L)$ to be
\[R(X,L)_{(d)} \definiere  \bigoplus_{d|k} H^0\bigl(X, L^{\otimes k} \bigr) \subset R(X,L).\]
\end{Def}
\begin{proof}[Proof of Proposition~\ref{prop:universal_1}]
In the proof of part (1) we follow \cite[proof of Lem.~4.3.1]{HL}. We will use the notation introduced in Section \ref{setup}.

 Let $B$ be a weakly normal variety and $\mathscr{E}$ a $B$-flat family of $\mu$-semistable sheaves with Hilbert polynomial $P$ with respect to the chosen ample polarisation $\mathscr{O}_X(1)$. 
 Denote the natural projections of $B \times X$ by $p\colon B \times X \to B$ and $q\colon B \times X \to X$. 

 Let $m \in \mathbb{N}$ be as in Section \ref{setup}, such that every 
$\mu$-semistable sheaf $F$ with the given invariants is $m$-regular. 
  We set $V \definiere  \mathbb{C}^{\oplus P(m)}$ and $\mathscr{H} \definiere  V \otimes \mathscr{O}_X (-m)$. The sheaf
$V_{\mathscr{E}} \definiere  p_*\bigl(\mathscr{E} \otimes q^*\mathscr{O}_X(m)\bigr)$
is locally free of rank $P(m)$, and there is a canonical surjection $\varphi_\mathscr{E}\colon p^* V_\mathscr{E} \otimes q^*\mathscr{O}_X(-m) \twoheadrightarrow \mathscr{E}$. 

Let $\pi\colon R(\mathscr{E}) \to B$ be the frame bundle 
associated with $V_{\mathscr{E}}$; the group $\GL(V)$ acts on $R(\mathscr{E})$ making it a $\GL(V)$-principal bundle with good quotient $\pi$. Since the pullback of $V_\mathscr{E}$ to $R(\mathscr{E})$ has a universal trivialisation, we obtain a canonically defined quotient $\widetilde q _{\mathscr{E}}\colon \mathscr{O}_{R(\mathscr{E})} \otimes_\mathbb{C} \mathscr{H} \twoheadrightarrow (\pi \times \mathrm{id}_X)^*\mathscr{E} $ on $R (\mathscr{E}) \times X$. The quotient $\widetilde q _{\mathscr{E}}$ gives rise to a classifying morphism $\widetilde \Psi_\mathscr{E}\colon R(\mathscr{E}) \to \mathrm{Quot}(\mathscr{H}, P)$, which is equivariant with respect to the $\GL(V)$-actions on $R(\mathscr{E})$ and $R^{\mu ss}$. Here, the latter action is induced by the $\SL(V)$-action via $\PGL(V)$, cf.~\cite[Lem.~4.3.2]{HL}.

Since the sheaf $\mathscr{E}_b$ was assumed to be slope-semistable for all $b \in B$, the image of $\widetilde \Psi_\mathscr{E}$ is contained in $R^{\mu ss}$. Note that as a principal bundle over the seminormal variety $B$, the space $R(\mathscr{E})$ is itself seminormal. Consequently, by Proposition~\ref{propnot:weaknormalisation} the map $\widetilde \Psi_\mathscr{E}$ lifts to a morphism from $R(\mathscr{E})$ to $(R^{\mu ss})^{wn} =S$, which we will continue to denote by $\widetilde \Psi_\mathscr{E}$. Composing $\widetilde \Psi_\mathscr{E}$ with the $\SL(V)$-invariant morphism $\Phi\colon S \to M^{\mu ss}$, we obtain a $\GL(V)$-invariant morphism $\widetilde \Phi_\mathscr{E}\colon R(\mathscr{E}) \to M^{\mu ss}$. Since $\pi$ is a good quotient, and hence in particular a categorical quotient, there exists a uniquely determined morphism $\Phi_\mathscr{E}\colon B \to M^{\mu ss}$ such that the following diagram commutes:
\begin{equation}\label{eq:framebundlediagram}
\begin{gathered}
\begin{xymatrix}{
 R(\mathscr{E}) \ar[rd]^{\widetilde\Phi_{\mathscr{E}}} \ar[d]_\pi \ar[r]^{\widetilde \Psi_\mathscr{E}}&  S \ar[d]^{\Phi} \\
 B \ar[r]^<<<<<{\Phi_\mathscr{E}}&  M^{\mu ss}.
}
\end{xymatrix}
\end{gathered}
\end{equation}
Assigning $\Phi_\mathscr{E} \in \mathrm{Mor}(B, M^{\mu ss})$ to $\mathscr{E}$ yields the desired natural transformation $\underline{M}^{\mu ss} \to \underline{\Hom}(\cdot , M^{\mu ss})$. 

It remains to show the isomorphism of line bundles \eqref{eq:functoriallinebundlepullback}. It follows from the commutative diagram \eqref{eq:framebundlediagram} and from the definition of $(M^{\mu ss}, \mathscr{O}_{M^{\mu ss}}, N)$ that
\[\pi^* \Phi_{\mathscr{E}}^* \mathscr{O}_{M^{\mu ss}}(1) \cong \widetilde \Psi_{\mathscr{E}}^* (\mathscr{L}_{n-1}^{\otimes N}) \cong\lambda_{\pi^*\mathscr{E} }(  u_{n-1})^{\otimes N} \cong \pi^*\lambda(  u_{n-1})^{\otimes N}.\]
Since $\pi^*\colon \mathrm{Pic}(B) \to \mathrm{Pic}(R(\mathscr{E}))$ is injective by \cite[Ch.~1, \S3, Prop.~1.4]{MumfordGIT}, cf.~\cite[Lem.~2.14]{LePotierDonaldsonUhlenbeck}, the previous chain of isomorphisms implies
\[\Phi_{\mathscr{E}}^* \mathscr{O}_{M^{\mu ss}}(1) \cong \lambda(  u_{n-1})^{\otimes N}, \]
as claimed in (1).

Next, we prove the claims made in item (2). If $(M', L', N') $ is another triple satisfying the conditions of (1), then applying the universal property we obtain a uniquely determined morphism $\Phi'\colon S \to M'$ such that $(\Phi')^*(L') \cong \mathscr{L}_{n-1}^{\otimes N'}$. We claim that $\Phi'$ is $\SL(V)$-invariant. Indeed, the restriction of $\Phi'$ to an $\SL(V)$-orbit $\mathcal{O}$ gives the classifying map for the restriction of the universal family to $\mathcal{O}$. But this latter classifying map is constant, which implies that $\Phi'(\mathcal{O}) = \{\mathrm{pt.}\}$, as claimed. 

Let $d \in \mathbb{N}^{>0}$ such that the $d$-th Veronese subring $R(M', L')_{(d)}$ of $R(M',L')$ is generated in degree one. Let $f\colon M' \to \mathrm{Proj}(R(M', L')_{(d)})$ be the natural isomorphism. Then, the pullback $f^*(\mathscr{O}(1))$ is naturally isomorphic to $(L')^{\otimes d}$. Setting $d '\definiere  dN'$ and using the universal property of $L'$, we obtain a natural morphism of graded rings 
\[ R(M', L')_{(dN)} \to  \bigoplus_{ d' | k } H^0(S, \mathscr{L}_{n-1}^{\otimes kN})^{\SL(V)}  \] by pulling back sections via the $\SL(V)$-invariant map $\Phi'$. This in turn induces a uniquely determined morphism  \[\psi\colon M = \mathrm{Proj}\Big(\bigoplus_{ d' | k } H^0(S, \mathscr{L}_{n-1}^{\otimes kN})^{\SL(V)} \Big) \to \mathrm{Proj}\Big(R(M', L')_{(dN)} \Big) = M'\] such that $\psi^*(L')^{\otimes dN} = L^{\otimes dN'}$. This concludes the proof of (2).
\end{proof}

\begin{cor}[Weak normality]
 The variety $M^{\mu ss}$ is weakly normal. 
\end{cor}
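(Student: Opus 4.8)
The plan is to show that the weak normalisation $\eta\colon (M^{\mu ss})^{wn} \to M^{\mu ss}$ is an isomorphism. Since $S = (R^{\mu ss}_{red})^{wn}$ is weakly normal by construction, the $\SL(V)$-invariant morphism $\Phi\colon S \to M^{\mu ss}$ of Definition~\ref{defi:modulispace} lifts, by the universal property of the weak normalisation (Proposition and Notation~\ref{propnot:weaknormalisation}), to a unique morphism $\widehat\Phi\colon S \to (M^{\mu ss})^{wn}$ with $\eta \circ \widehat\Phi = \Phi$. First I would record two properties of this lift. As $\Phi$ is surjective (its image is dense by construction and complete by Proposition~\ref{prop:completeimage}) and $\eta$ is a homeomorphism, $\widehat\Phi$ is surjective as well, so pullback of sections of any line bundle along $\widehat\Phi$ into $H^0(S, \cdot)$ is injective. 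Moreover, $\widehat\Phi$ is $\SL(V)$-invariant: for $g \in \SL(V)$ both $\widehat\Phi$ and $\widehat\Phi \circ g$ lift the invariant morphism $\Phi$ through $\eta$, so they coincide by the uniqueness clause of the universal property.

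Next I would set $A \definiere \eta^*L$, which is ample on $(M^{\mu ss})^{wn}$ because $\eta$ is finite and $L = \mathscr{O}_{M^{\mu ss}}(1)$ is ample, and note that $\widehat\Phi^* A = \Phi^* L = \mathscr{L}_{n-1}^{\otimes N}$. The heart of the argument is a comparison of section rings in large degree. For $k \gg 0$ one has $H^0(M^{\mu ss}, L^{\otimes k}) = W_{kN}$, and the pullback $\Phi^*\colon H^0(M^{\mu ss}, L^{\otimes k}) \to W_{kN}$ is the identity, because $M^{\mu ss}$ was defined as the $\mathrm{Proj}$ of the ring assembled from exactly these invariant sections (Definition~\ref{defi:modulispace} together with the finite generation Proposition~\ref{prop:fingen}). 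Factoring this identity as $\Phi^* = \widehat\Phi^* \circ \eta^*$, and using that $\widehat\Phi^*$ is injective with image contained in the invariants $W_{kN} = H^0(S, \mathscr{L}_{n-1}^{\otimes kN})^{\SL(V)}$ (by invariance of $\widehat\Phi$), a short diagram chase shows that $\widehat\Phi^*$ restricts to an isomorphism onto $W_{kN}$, and hence that $\eta^* = (\widehat\Phi^*)^{-1}\circ \Phi^*$ is an isomorphism $H^0(M^{\mu ss}, L^{\otimes k}) \to H^0\bigl((M^{\mu ss})^{wn}, A^{\otimes k}\bigr)$ for all $k \gg 0$.

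Finally I would conclude by taking $\mathrm{Proj}$. As $L$ and $A$ are ample, $M^{\mu ss}$ and $(M^{\mu ss})^{wn}$ are recovered as the $\mathrm{Proj}$ of the respective section rings (equivalently, of a sufficiently high Veronese subring), and the isomorphism of these rings in large degree induced by $\eta^*$ is precisely the graded ring map underlying the morphism $\eta$. Hence $\eta$ is an isomorphism, which is exactly the assertion that $M^{\mu ss}$ is weakly normal.

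I expect the main obstacle to be the surjectivity of $\eta^*$ on sections, i.e.\ ruling out that the weak normalisation of $M^{\mu ss}$ carries genuinely more global sections of $A$ than $M^{\mu ss}$ carries of $L$. This is exactly what the factorisation $\Phi^* = \widehat\Phi^* \circ \eta^*$ controls: the tautological identification $H^0(M^{\mu ss}, L^{\otimes k}) = W_{kN}$ forces every invariant section on $S$ to descend already to $M^{\mu ss}$, leaving no room for extra sections upstairs. The supporting points that must be checked with some care are the surjectivity of $\Phi$ (via Proposition~\ref{prop:completeimage}), the invariance of $\widehat\Phi$, and the standard fact that for a finitely generated graded ring generated in degree one the degree-$k$ graded piece agrees with $H^0(\mathrm{Proj}, \mathscr{O}(k))$ for $k \gg 0$.
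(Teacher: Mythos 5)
Your argument is correct, but it takes a genuinely different route from the paper's. The paper proves the corollary in three lines by a purely formal appeal to the universal property: the triple $\bigl((M^{\mu ss})^{wn}, \eta^*L, N\bigr)$ itself satisfies property (1) of Proposition~\ref{prop:universal_1}, because the moduli functor is only tested against \emph{weakly normal} bases $B$, so every classifying morphism $B \to M^{\mu ss}$ lifts uniquely through $\eta$ by Proposition~\ref{propnot:weaknormalisation}; the uniqueness clause (2) then yields a morphism $\psi\colon M^{\mu ss} \to (M^{\mu ss})^{wn}$ inverse to $\eta$. You instead re-derive the statement directly from the construction, by comparing section rings: surjectivity of $\Phi$ (density plus completeness of the image via Proposition~\ref{prop:completeimage}), invariance of the lift $\widehat\Phi$, injectivity of pullback along surjections of reduced varieties, the identification $H^0\bigl(M^{\mu ss}, L^{\otimes k}\bigr) \cong W_{kN}$ for $k \gg 0$ (Proposition~\ref{prop:fingen} together with the standard fact you cite, which is indeed valid since the local cohomology modules controlling the comparison are Artinian), and the factorisation $\Phi^* = \widehat\Phi^* \circ \eta^*$. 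Your diagram chase is sound and, pleasantly, avoids the uniqueness machinery of Proposition~\ref{prop:universal_1}(2) altogether; the price is bookkeeping, and one step you leave implicit deserves to be spelled out: to conclude that $\widehat\Phi^*$ lands in the invariant subspace $W_{kN}$, you need the isomorphism $\widehat\Phi^*\bigl(\eta^*L\bigr)^{\otimes k} \cong \mathscr{L}_{n-1}^{\otimes kN}$ to be $\SL(V)$-equivariant, where the left-hand side carries the pullback linearisation along the invariant map $\widehat\Phi$. This holds because $\SL(V)$ is semisimple, so the forgetful map $\mathrm{Pic}_{\SL(V)}(S) \to \mathrm{Pic}(S)$ is injective and linearisations are unique --- exactly the input to the paper's Lemma~\ref{lem:pullbackinjective} --- but it is a genuine step rather than automatic. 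What the paper's route buys is brevity and a conceptual point your computation obscures: weak normality of $M^{\mu ss}$ is \emph{forced} by the decision to define the functor $\underline{M}^{\mu ss}$ only on weakly normal parameter spaces; what your route buys is a self-contained verification from the Proj construction that would survive even without the full universal-property package.
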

\begin{proof}
 The weak normalisation $\eta\colon~(M^{\mu ss})^{wn}\to M^{\mu ss}$ together with the line bundle $L' \definiere  \eta^* L$ and the natural number $N$ has the universal properties spelled out in part (1) of Proposition~\ref{prop:universal_1}. Indeed, by Proposition~\ref{propnot:weaknormalisation} every map from a weakly normal variety $S$ to $M^{\mu ss}$ can be lifted in a unique way to a map from $S$ to $(M^{\mu ss})^{wn}$ satisfying the required pullback properties. Consequently, part (2) of Proposition~\ref{prop:universal_1} yields a uniquely determined morphism $\psi\colon M^{\mu ss} \to (M^{\mu ss})^{wn}$, which gives an inverse to $\eta$. 
\end{proof}
Comparing the universal property $(2)$ of Proposition~\ref{prop:universal_1} with the one claimed in the Main Theorem, we see that we need to improve the uniqueness statement in Proposition~\ref{prop:universal_1}(2). This amounts to showing that although the map $\Phi\colon S \to M^{\mu ss}$ is not proper, due to its $\SL(V)$-invariance it still has certain properties that are reminiscent of the Stein fibration and Iitaka fibration for semiample line bundles on proper varieties, cf.~\cite[Sect.~2.1.C]{Lazarsfeld}.

\begin{Lem}[Pushing-down invariant functions]\label{lem:categorical}
 Let $\Phi\colon S \to M^{\mu ss}$ be as before. Then, we have
\[ \Phi_*(\mathscr{O}_S)^{\SL(V)} = \mathscr{O}_{M^{\mu ss}}.\]
\end{Lem}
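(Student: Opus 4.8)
The plan is to verify the claimed equality locally on $M^{\mu ss}$ and to exploit the fact that $\Phi$, although not proper, behaves properly along its fibres by virtue of Proposition~\ref{prop:completeimage}. Since $\bigoplus_{k\ge 0}W_{kN}$ is generated in degree one (Proposition~\ref{prop:fingen}), it suffices to treat the basic affine opens $U=D_+(f)\subset M^{\mu ss}$ for $f\in W_N$; here $\mathscr{O}_{M^{\mu ss}}(U)$ is the degree-zero localisation $R_{(f)}=\{g/f^d : g\in W_{dN}\}$, and $\Phi^{-1}(U)=S_f=\{f\ne 0\}$, so that $\Phi_*(\mathscr{O}_S)^{\SL(V)}(U)=\mathscr{O}_S(S_f)^{\SL(V)}$. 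First I would record the trivial inclusion: every $g/f^d$ with $g\in W_{dN}$ restricts to a genuine $\SL(V)$-invariant regular function on $S_f$, giving $R_{(f)}\subseteq \mathscr{O}_S(S_f)^{\SL(V)}$ and hence $\mathscr{O}_{M^{\mu ss}}\subseteq \Phi_*(\mathscr{O}_S)^{\SL(V)}$. The entire content lies in the reverse inclusion.

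For the reverse inclusion I would first analyse the fibres of $\Phi$. Because $\Phi$ is continuous and each closed point $y$ satisfies $\overline{\{y\}}=\{y\}$, one has $\Phi(\overline{\Phi^{-1}(y)})\subseteq\{y\}$, so that $\Phi^{-1}(y)$ is already closed in $S$, and it is clearly $\SL(V)$-invariant. Proposition~\ref{prop:completeimage}, applied to the invariant closed subvariety $Z=\Phi^{-1}(y)$, then shows that any $\SL(V)$-invariant morphism out of $Z$ has complete image; in particular an invariant function $h\in\mathscr{O}_S(S_f)^{\SL(V)}$ restricts on $Z$ to a function with complete, hence finite, image. Combined with connectedness of the fibres --- which I would derive from the standard fact that each fibre contains a unique closed orbit, using Proposition~\ref{prop:completeimage} to produce a closed orbit in every orbit closure and the invariant sections defining $\Phi$ to separate distinct closed orbits --- this yields that $h$ is constant on every fibre $\Phi^{-1}(y)$ with $y\in U$. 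Consequently $h$ descends set-theoretically to a function $\bar h$ on $U$ with $h=\bar h\circ\Phi$.

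It then remains to promote $\bar h$ to a regular function, and this is where the weak normality of $M^{\mu ss}$ (established in the preceding corollary) enters. I would assemble the descended functions into the quasi-coherent sheaf of $\mathscr{O}_{M^{\mu ss}}$-algebras $\mathcal{A}\definiere\Phi_*(\mathscr{O}_S)^{\SL(V)}$, form $\widetilde{M}\definiere \mathrm{Spec}_{M^{\mu ss}}(\mathcal{A})$, and use the canonical factorisation $S\to\widetilde{M}\to M^{\mu ss}$ of $\Phi$. The fibre analysis above shows that $\widetilde{M}\to M^{\mu ss}$ is bijective with trivial residue-field extensions, and Proposition~\ref{prop:completeimage} furnishes the properness needed to see it is finite (quasi-finite because invariant functions take finitely many values on each fibre, and proper as a substitute for the missing properness of $\Phi$). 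Since a finite, bijective morphism onto a weakly normal variety inducing trivial residue-field extensions is an isomorphism, one concludes $\mathcal{A}=\mathscr{O}_{M^{\mu ss}}$, as claimed.

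The main obstacle is precisely this replacement of properness: because $S$ is only quasi-projective and $\Phi$ is not proper, neither the finiteness of $\mathcal{A}$ over $\mathscr{O}_{M^{\mu ss}}$ nor the connectedness of the fibres of $\Phi$ is automatic. Both are forced by Proposition~\ref{prop:completeimage}, our incarnation of Langton's completeness theorem, which is the technical heart of the argument; once finiteness and bijectivity of $\widetilde{M}\to M^{\mu ss}$ are in place, the conclusion is a formal consequence of weak normality.
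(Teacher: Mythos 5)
Your reduction to the basic opens $D_+(f)$, the easy inclusion $\mathscr{O}_{M^{\mu ss}}\subseteq \Phi_*(\mathscr{O}_S)^{\SL(V)}$, and the use of Proposition~\ref{prop:completeimage} to see that an invariant $h\in\mathscr{O}_S(S_f)^{\SL(V)}$ has complete, hence finite, image on each fibre are all fine. But the two pillars carrying the rest of your argument are unsupported. First, connectedness of the fibres of $\Phi$: the ``standard fact'' that each fibre contains a unique closed orbit is a theorem about \emph{good} quotients (affine GIT quotients glued along invariant affine charts), and nothing of that sort is available here --- $\mathscr{L}_{n-1}$ is not ample, so $S_f$ is not known to be affine, $S$ is not proper, and $\Phi$ is not known to be a good quotient; indeed Lemma~\ref{lem:categorical} is precisely the first step towards such quotient-like behaviour, which makes your route nearly circular. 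Worse, your proposed separation mechanism cannot work: by construction the invariant sections defining $\Phi$ do not separate any two points of the same fibre, so they certainly cannot separate two closed orbits lying in one fibre. What you would need is that every invariant \emph{function} on $S_f$ is a ratio $g/f^d$ of invariant sections --- but that is exactly the statement being proved; a priori a fibre could contain two closed orbits distinguished by some $h\in\mathscr{O}_S(S_f)^{\SL(V)}$ not of this form, in which case $h$ would not descend, and nothing in your argument excludes this. Second, finiteness of $\widetilde{M}=\mathrm{Spec}_{M^{\mu ss}}(\mathcal{A})\to M^{\mu ss}$: Proposition~\ref{prop:completeimage} concerns images of invariant morphisms defined on closed invariant subvarieties of $S$, and $\widetilde{M}$ is not such an image, so it furnishes no properness here. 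Since $\widetilde{M}\to M^{\mu ss}$ is affine, its properness \emph{is} its finiteness, i.e.\ coherence of $\mathcal{A}$ as an $\mathscr{O}_{M^{\mu ss}}$-module, and no finite-generation statement for these rings of invariant functions is available at this stage. So both the set-theoretic descent of $h$ and its promotion to a regular function remain unproven.

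For comparison, the paper's proof bypasses fibres entirely and is purely sheaf-theoretic: the sheaf $\mathcal{A}=\Phi_*(\mathscr{O}_S)^{\SL(V)}$ is quasi-coherent, so by the Proj correspondence, cf.~\cite[Prop.~II.5.15]{Hartshorne}, the natural map $\theta\colon \mathscr{O}_{M^{\mu ss}}\to\mathcal{A}$ is an isomorphism as soon as the induced maps $H^0\bigl(M^{\mu ss}, L^{\otimes k}\bigr)\to H^0\bigl(M^{\mu ss}, \mathcal{A}\otimes L^{\otimes k}\bigr)$ are isomorphisms for all $k\geq 1$. Using the equivariant projection formula, Lemma~\ref{lem:equivariantprojection}, together with the $\SL(V)$-equivariant isomorphism $\Phi^*L^{\otimes k}\cong \mathscr{L}_{n-1}^{\otimes kN}$, the target is identified with $H^0\bigl(S, \mathscr{L}_{n-1}^{\otimes kN}\bigr)^{\SL(V)}=W_{kN}$, and the resulting map is an isomorphism by the very construction of $M^{\mu ss}$ as $\mathrm{Proj}\bigl(\bigoplus_{k\geq 0}W_{kN}\bigr)$ with generation in degree one, Proposition~\ref{prop:fingen}. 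In particular, the paper needs neither connectedness of fibres, nor finiteness of $\mathcal{A}$, nor the weak normality of $M^{\mu ss}$; the fibrewise picture you aim at (constancy of invariants on fibres, unique closed orbits) is at best a consequence one could hope to extract afterwards, not an admissible input.
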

\begin{proof}
 We first note the following generalisation of the classical projection formula to the equivariant setting, cf.~\cite[Lem.~9.2]{PaHq}:
 
\begin{Lem}[Equivariant projection formula]\label{lem:equivariantprojection}
Let $G$ be an algebraic group, let $Y$ be an algebraic $G$-variety, and let $f\colon Y \to Z$ be a $G$-invariant morphism to an algebraic variety Z. Then, for every $G$-linearised coherent algebraic sheaf $\mathscr{F}$ on $Y$ and every locally free sheaf $\mathscr{E}$ of finite rank on $Z$, there is a natural isomorphism
\[ f_* (\mathscr{F} \otimes f^*\mathscr{E})^G \cong f_*(\mathscr{F})^G \otimes \mathscr{E}. \] 
Here, $f^*\mathscr{E}$ is given the natural $G$-linearisation as a pullback bundle via an invariant morphism, and $\mathscr{F}\otimes f^*\mathscr{E}$ is given the natural tensor product linearisation. 
\end{Lem}
We proceed as follows. Since $\Phi$ is invariant, we obtain a natural morphism $\theta\colon \mathscr{O}_{M^{\mu ss}} \to \Phi_*(\mathscr{O}_S)^{\SL(V)}$ of quasi-coherent sheaves of $\mathscr{O}_{M^{\mu ss}}$-modules. Since $L = \mathscr{O}_{M^{\mu ss}}(1)$ is ample, and since $R(M^{\mu ss}, L)$ is generated in degree one, in order for $\theta$ to be an isomorphism it suffices to show that the induced map \[\hat \theta_k\colon H^0\bigl(M^{\mu ss}, L^{\otimes k}\bigr) \to 
H^0\bigl(M^{\mu ss}, \Phi_*(\mathscr{O}_S)^{\SL(V)} \otimes L^{\otimes k}\bigr)\] is an isomorphism for all natural numbers $k \geq 1$. For this, we note that $\hat \theta_k$ can be factored in the following way:
\begin{align*}
 H^0\bigl(M^{\mu ss}, L^{\otimes k}\bigr)  &\overset{\alpha_k}{\longrightarrow } H^0\bigl(S, \mathscr{L}_{n-1}^{\otimes kN}\bigr)^{\SL(V)} \overset{\beta_k}{\longrightarrow} H^0\bigl(M^{\mu ss}, \Phi_*(\mathscr{O}_S \otimes \Phi^*L^{\otimes k})^{\SL(V)}\bigr) \\
 &\overset{\gamma_k}{\longrightarrow} H^0\bigl(M^{\mu ss}, \Phi_*(\mathscr{O})^{\SL(V)} \otimes L^{\otimes k}\bigr).
\end{align*}
In the previous diagram, $\alpha_k$ and $\beta_k$ are isomorphisms by definition and by the $\SL(V)$-equivariant isomorphism $\Phi^* L^{ \otimes k} \cong \mathscr{L}_{n-1}^{\otimes kN}$, and $\gamma_k$ is an isomorphism by the equivariant projection formula, Lemma~\ref{lem:equivariantprojection} above. Consequently, the composition, which is equal to $\hat \theta_k$, is an isomorphism. This concludes the proof of Lemma~\ref{lem:categorical}.\qedhere
\end{proof}
The following is the analogue of \cite[Ex.~2.1.14]{Lazarsfeld} in our equivariant setup. 
\begin{lemma}[Injectivity of equivariant pullback] \label{lem:equivpullbackinjective}
The natural pullback map from $\mathrm{Pic}(M^{\mu ss})$ to the group of $\SL(V)$-linearised line bundles on $S$, 
\[\Phi^*\colon \mathrm{Pic}(M^{\mu ss}) \to \mathrm{Pic}_{\SL(V)}(S),\]
is injective. 
\end{lemma}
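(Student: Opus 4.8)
The plan is to exploit that $\Phi^*$ is a homomorphism of abelian groups: it sends the tensor product of line bundles on $M^{\mu ss}$ to the tensor product of the corresponding $\SL(V)$-linearised pullbacks, so injectivity is equivalent to the triviality of its kernel. I would therefore fix a line bundle $\mathscr{M} \in \mathrm{Pic}(M^{\mu ss})$ whose pullback $\Phi^*\mathscr{M}$, equipped with its natural $\SL(V)$-linearisation as the pullback along the invariant morphism $\Phi$, is isomorphic to $\mathscr{O}_S$ carrying the trivial linearisation (the identity element of $\mathrm{Pic}_{\SL(V)}(S)$), and aim to conclude that $\mathscr{M} \cong \mathscr{O}_{M^{\mu ss}}$. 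The two inputs doing all the work are the equivariant projection formula (Lemma~\ref{lem:equivariantprojection}) and the identity $\Phi_*(\mathscr{O}_S)^{\SL(V)} = \mathscr{O}_{M^{\mu ss}}$ established in Lemma~\ref{lem:categorical}; the argument is the equivariant incarnation of the classical fact that a proper map $f$ with $f_*\mathscr{O} = \mathscr{O}$ has injective pullback on Picard groups, cf.~\cite[Ex.~2.1.14]{Lazarsfeld}.

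Concretely, I would apply Lemma~\ref{lem:equivariantprojection} with $Y = S$, $Z = M^{\mu ss}$, $f = \Phi$, $G = \SL(V)$, the $G$-linearised sheaf $\mathscr{F} = \mathscr{O}_S$ (trivial linearisation), and the locally free sheaf $\mathscr{E} = \mathscr{M}$, which yields
\[ \Phi_*\bigl(\Phi^*\mathscr{M}\bigr)^{\SL(V)} \cong \Phi_*\bigl(\mathscr{O}_S \otimes \Phi^*\mathscr{M}\bigr)^{\SL(V)} \cong \Phi_*(\mathscr{O}_S)^{\SL(V)} \otimes \mathscr{M} \cong \mathscr{M}, \]
where the final isomorphism is Lemma~\ref{lem:categorical}. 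On the other hand, the hypothesis $\Phi^*\mathscr{M} \cong \mathscr{O}_S$ as $\SL(V)$-linearised bundles, combined once more with Lemma~\ref{lem:categorical}, gives $\Phi_*(\Phi^*\mathscr{M})^{\SL(V)} \cong \Phi_*(\mathscr{O}_S)^{\SL(V)} \cong \mathscr{O}_{M^{\mu ss}}$. Comparing the two computations produces $\mathscr{M} \cong \mathscr{O}_{M^{\mu ss}}$, so the kernel of $\Phi^*$ is trivial and $\Phi^*$ is injective.

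The step requiring the most care — rather than a genuine obstacle — is the bookkeeping of the $\SL(V)$-linearisations: one must check that the natural isomorphism underlying Lemma~\ref{lem:equivariantprojection} is compatible with the linearisations, so that every isomorphism displayed above holds \emph{equivariantly}, and, crucially, that the condition ``$\mathscr{M}$ lies in the kernel of $\Phi^*$'' is read as an isomorphism of \emph{linearised} bundles between $\Phi^*\mathscr{M}$ and the trivially linearised $\mathscr{O}_S$. Once this is pinned down the proof is a two-line diagram chase, and no properness or ampleness of $L$ is needed beyond what already entered the proof of Lemma~\ref{lem:categorical}.
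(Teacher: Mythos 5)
Your proof is correct, and it rests on exactly the same two inputs as the paper's proof — the equivariant projection formula (Lemma~\ref{lem:equivariantprojection}) and the identity $\Phi_*(\mathscr{O}_S)^{\SL(V)} = \mathscr{O}_{M^{\mu ss}}$ (Lemma~\ref{lem:categorical}) — but the endgame is organised differently. The paper passes to global sections: it chains the same sheaf-level isomorphisms into $H^0\bigl(S, \mathscr{O}_S\bigr)^{\SL(V)} \cong H^0\bigl(M^{\mu ss}, D\bigr)$, observes that a non-zero constant invariant function yields a section of $D$ not vanishing on any component, and then must repeat the argument for the dual $D^{-1}$ before concluding that $D$ is trivial. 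You instead stay entirely at the sheaf level, computing $\Phi_*(\Phi^*\mathscr{M})^{\SL(V)}$ in two ways — once via the projection formula to get $\mathscr{M}$, once via the kernel hypothesis and Lemma~\ref{lem:categorical} to get $\mathscr{O}_{M^{\mu ss}}$ — which delivers $\mathscr{M} \cong \mathscr{O}_{M^{\mu ss}}$ in one stroke and sidesteps both the dual-bundle trick and the nonvanishing-on-components argument (which on a possibly reducible projective variety requires a word of care). What your route requires in exchange is precisely what you flag: the functoriality of $\Phi_*(\cdot)^{\SL(V)}$ applied to the equivariant isomorphism $\Phi^*\mathscr{M} \cong \mathscr{O}_S$, and the compatibility of the projection-formula isomorphism with the linearisations, including reading membership in the kernel as an isomorphism of \emph{linearised} bundles with the natural pullback linearisation on $\Phi^*\mathscr{M}$. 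Both points are harmless given how Lemma~\ref{lem:equivariantprojection} is stated, so your version is, if anything, a mild streamlining of the paper's argument.
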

\begin{proof}
 Let $D$ be a line bundle on $M^{\mu ss}$ such that $\Phi^*(D)$ is $\SL(V)$-equivariantly isomorphic to the trivial line bundle with the trivial $\SL(V)$-linearisation. The equivariant projection formula, Lemma~\ref{lem:equivariantprojection}, and Lemma~\ref{lem:categorical} lead to the following chain of isomorphisms
\begin{equation}\label{eq:invariantchain}
\begin{aligned}
H^0\bigl(S, \mathscr{O}_S \bigr)^{\SL(V)} &\cong H^0\bigl(M^{\mu ss}, \Phi_*(\Phi^*D)^{\SL(V)} \bigr) \\
					      &\cong  H^0\bigl(M^{\mu ss}, \Phi_*(\mathscr{O}_S)^{\SL(V)} \otimes D \bigr) \\
					      &\cong H^0\bigl(M^{\mu ss}, D \bigr).
\end{aligned}
\end{equation}
Any non-zero constant function on $S$ is trivially $\SL(V)$-invariant, and hence via \eqref{eq:invariantchain} induces a section in $H^0\bigl(M^{\mu ss}, D \bigr)$ that does not vanish on any component of $M^{\mu ss}$. As the same reasoning applies to the dual $D^{-1}$, the line bundle $D$ is trivial, as claimed.
\end{proof}
\begin{lemma}[Injectivity of pullback]\label{lem:pullbackinjective}
 The pullback map $\Phi^*\colon \mathrm{Pic}(M^{\mu ss}) \to \mathrm{Pic}(S)$
is injective.
\end{lemma}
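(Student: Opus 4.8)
The plan is to reduce the non-equivariant statement to the equivariant injectivity already proven in Lemma~\ref{lem:equivpullbackinjective}. The pullback map of the present lemma factors as
$$\mathrm{Pic}(M^{\mu ss}) \xrightarrow{\ \Phi^*_{\mathrm{eq}}\ } \mathrm{Pic}_{\SL(V)}(S) \xrightarrow{\ \mathrm{forget}\ } \mathrm{Pic}(S),$$
where $\Phi^*_{\mathrm{eq}}$ equips $\Phi^*D$ with its natural $\SL(V)$-linearisation and the second arrow forgets this linearisation. By Lemma~\ref{lem:equivpullbackinjective} the map $\Phi^*_{\mathrm{eq}}$ is injective, so it suffices to show that the forgetful map is injective; equivalently, that if the underlying line bundle of $\Phi^*D$ is trivial, then its natural linearisation is already the trivial one.

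The kernel of the forgetful map is exactly the group of isomorphism classes of $\SL(V)$-linearisations of the structure sheaf $\mathscr{O}_S$. I would identify this group with the character group $X^*(\SL(V)) = \mathrm{Hom}(\SL(V), \mathbb{G}_m)$ as follows. Measured against the trivial linearisation, any linearisation of $\mathscr{O}_S$ is given by a normalised cocycle $u \in H^0(\SL(V) \times S, \mathscr{O}^*)$ with $u(gh,s) = u(g,hs)\,u(h,s)$ and $u(e,\cdot)=1$. Because $\SL(V)$ is connected, it preserves each connected component of $S$ (the image of the connected $\SL(V)\times S_j$ lies in a single component), so I may argue one component at a time. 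On a connected component, Rosenlicht's theorem on units of a product of varieties, together with the description of units on a connected algebraic group, forces $u(g,s)$ to be a character of $\SL(V)$ independent of $s$, the normalisation killing the $S$-dependent factor.

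The decisive input is now that $\SL(V)$ is semisimple and therefore admits no nontrivial characters, i.e.\ $X^*(\SL(V)) = 0$. Hence every linearisation of $\mathscr{O}_S$ is isomorphic to the trivial one, the forgetful map is injective, and so is the composite $\mathrm{forget}\circ\Phi^*_{\mathrm{eq}}$, which is precisely the map of the present lemma. Concretely, if $D \in \mathrm{Pic}(M^{\mu ss})$ satisfies $\Phi^*D \cong \mathscr{O}_S$ as a line bundle, the preceding paragraph shows that the natural linearisation on $\Phi^*D$ is trivial, so $\Phi^*D$ is \emph{equivariantly} trivial and Lemma~\ref{lem:equivpullbackinjective} yields $D \cong \mathscr{O}_{M^{\mu ss}}$.

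The only genuinely technical step is the identification of the kernel of the forgetful map with $X^*(\SL(V))$, which is a statement about units on products of connected varieties and has nothing to do with the moduli problem; once it is in place the semisimplicity of $\SL(V)$ closes the argument at once. If one prefers to avoid invoking the units theorem, an alternative is to run the chain of isomorphisms \eqref{eq:invariantchain} for the natural linearisation $\ell$ on $\Phi^*D\cong\mathscr{O}_S$: the $\ell$-invariant global functions on $S$ are the $\chi$-semi-invariants attached to the character $\chi$ of $\ell$, and since $\SL(V)$ has no nontrivial character they agree with the genuine invariant functions, which are constant by Proposition~\ref{prop:completeimage}; applying this to $D$ and $D^{-1}$ again forces $D$ to be trivial.
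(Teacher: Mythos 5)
Your proof is correct and follows essentially the same route as the paper: the paper factors $\Phi^*$ through $\mathrm{Pic}_{\SL(V)}(S)$, cites \cite[Ch.~1, \S3, Prop.~1.4]{MumfordGIT} for the injectivity of the forgetful map $\mathrm{Pic}_{\SL(V)}(S) \to \mathrm{Pic}(S)$ (using that the semisimple group $\SL(V)$ has no nontrivial characters), and concludes via Lemma~\ref{lem:equivpullbackinjective}. The only difference is that you inline the proof of the cited fact --- identifying the kernel of the forgetful map with $X^*(\SL(V))$ via the cocycle description and Rosenlicht's units theorem --- which is precisely the content of Mumford's proposition, so this is a matter of self-containedness rather than a different argument.
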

\begin{proof}
 Since $\SL(V)$ is semisimple, and therefore has no nontrivial characters, by \cite[Chap.~1, \S3, Prop.~1.4]{MumfordGIT} the forgetful map $\mathrm{Pic}_{\SL(V)}(S) \to \mathrm{Pic}(S)$ is injective. Together with Lemma~\ref{lem:equivpullbackinjective} this yields the claim.
\end{proof}

In the next step, we make a first improvement concerning the universal properties of Proposition~\ref{prop:universal_1}.

\begin{lemma}\label{lem:universal_2}
 Using the notation of Proposition~\ref{prop:universal_1}, we may take $d=1$. I.e., for any other triple of a natural number $N'$, a projective variety $M'$, and an ample line bundle $L'$ fulfilling the conditions spelled out in part (1) of Proposition~\ref{prop:universal_1}, there exist a uniquely determined morphism $\psi\colon M^{\mu ss} \to M'$ such that $\psi^*(L')^{\otimes N} \cong L^{\otimes N'}$. Call this universal property \emph{Property (2')}.
\end{lemma}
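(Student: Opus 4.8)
The plan is to start from the morphism $\psi\colon M^{\mu ss}\to M'$ that has already been produced in Proposition~\ref{prop:universal_1}(2), together with the relation $\psi^*(L')^{\otimes dN}\cong L^{\otimes dN'}$ and the factorisation $\psi\circ\Phi=\Phi'$ that is built into the $\mathrm{Proj}$-construction of $\psi$ (the graded-ring homomorphism defining $\psi$ is precisely pullback of sections along the invariant classifying map $\Phi'$, so on points $\psi(\Phi(s))=\Phi'(s)$). The only thing that needs improving is the spurious exponent $d$. To remove it, I would introduce the ``discrepancy'' line bundle
\[ D\definiere \psi^*(L')^{\otimes N}\otimes L^{\otimes(-N')}\in\mathrm{Pic}(M^{\mu ss}), \]
and aim to show that $D$ is trivial. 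Note that $D^{\otimes d}\cong\mathscr{O}_{M^{\mu ss}}$ holds by the relation inherited from Proposition~\ref{prop:universal_1}(2), but this alone does not force $D$ to be trivial, since $\mathrm{Pic}(M^{\mu ss})$ may well have torsion; the idea is to rule this out by passing to $S$.

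The key step is then to compute the pullback $\Phi^*D$ and to invoke injectivity of $\Phi^*$. Using $\Phi^*L\cong\mathscr{L}_{n-1}^{\otimes N}$ from Definition~\ref{defi:modulispace}, the factorisation $\psi\circ\Phi=\Phi'$, and property~(1) of the triple $(M',L',N')$, which gives $(\Phi')^*(L')\cong\mathscr{L}_{n-1}^{\otimes N'}$, one finds
\[ \Phi^*D\cong\bigl((\psi\circ\Phi)^*L'\bigr)^{\otimes N}\otimes(\Phi^*L)^{\otimes(-N')}\cong\mathscr{L}_{n-1}^{\otimes NN'}\otimes\mathscr{L}_{n-1}^{\otimes(-NN')}\cong\mathscr{O}_S. \]
By Lemma~\ref{lem:pullbackinjective}, the pullback map $\Phi^*\colon\mathrm{Pic}(M^{\mu ss})\to\mathrm{Pic}(S)$ is injective, so $\Phi^*D\cong\mathscr{O}_S$ forces $D\cong\mathscr{O}_{M^{\mu ss}}$. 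This is exactly the assertion $\psi^*(L')^{\otimes N}\cong L^{\otimes N'}$, i.e.\ Property~(2'). Uniqueness of $\psi$ is inherited unchanged from Proposition~\ref{prop:universal_1}(2), since we are using the very same morphism and have merely sharpened the line-bundle identity it satisfies.

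The main obstacle is not the computation above, which is essentially formal, but rather making sure its two inputs are legitimately available. First, the factorisation $\psi\circ\Phi=\Phi'$ must be extracted cleanly from the construction of $\psi$; I would record this as a short observation attached to the proof of Proposition~\ref{prop:universal_1}(2), noting that the induced map on $\mathrm{Proj}$ composes with the tautological map $\Phi$ to recover $\Phi'$. Second, and more importantly, the whole argument hinges on Lemma~\ref{lem:pullbackinjective}, whose proof rests on the equivariant structure theory already established (Lemma~\ref{lem:categorical} and the equivariant projection formula, Lemma~\ref{lem:equivariantprojection}); the crucial geometric input there is that $\Phi$, although not proper, still satisfies $\Phi_*(\mathscr{O}_S)^{\SL(V)}=\mathscr{O}_{M^{\mu ss}}$, which plays here the role that a Stein factorisation plays for semiample bundles on proper varieties. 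Granting these, Property~(2') follows at once.
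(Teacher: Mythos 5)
Your proposal is correct and follows essentially the same route as the paper: the paper likewise takes the morphism $\psi$ furnished by the $\mathrm{Proj}$-construction (so that $\psi\circ\Phi=\Phi'$, with $\Phi'$ the classifying morphism of the universal family over $S$), computes $\Phi^*\bigl(\psi^*(L')^{\otimes N}\bigr)\cong(\Phi')^*(L')^{\otimes N}\cong\mathscr{L}_{n-1}^{\otimes NN'}\cong\Phi^*\bigl(L^{\otimes N'}\bigr)$ using property (1) applied to the universal family, and concludes by the injectivity of $\Phi^*$ from Lemma~\ref{lem:pullbackinjective}. Your ``discrepancy bundle'' $D$ is merely a repackaging of this same chain of isomorphisms, and your identification of the two essential inputs --- the factorisation through $\Phi'$ and the non-properness-circumventing fact $\Phi_*(\mathscr{O}_S)^{\SL(V)}=\mathscr{O}_{M^{\mu ss}}$ underlying Lemma~\ref{lem:pullbackinjective} --- matches the paper exactly.
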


\begin{proof}
Let $(M', L', N')$ be a second triple having property (1) of Proposition~\ref{prop:universal_1}. Then, as in the proof of Proposition~\ref{prop:universal_1} let $\Phi'\colon S \to M'$ be the classifying morphism for the universal family over $S$. Via the Proj construction, we obtain a uniquely determined morphism $\psi\colon M^{\mu ss} \to M'$ such that the following diagram commutes
\[\begin{xymatrix}{
    & S \ar[ld]_{\Phi} \ar[rd]^{\Phi'}&    \\
M^{\mu ss} \ar[rr]^{\psi} &  &  M'.
}
  \end{xymatrix} 
\]
By the universal properties of $L$ and $L'$, we have natural isomorphisms
\[\Phi^*(\psi^*(L')^{\otimes N}) \cong (\Phi')^*(L')^{\otimes N} \cong \mathscr{L}_{n-1}^{\otimes NN'} \cong \Phi^*(L^{\otimes N'}). \]
As $\Phi^{*}$ is injective by Lemma~\ref{lem:pullbackinjective}, this implies $\psi^*(L')^{\otimes N} \cong L^{\otimes N'}$.
\end{proof}

We are now in a position to prove the existence of an ``optimal'' line bundle on $M^{\mu ss}$ that has the universal property stated in the Main Theorem. 

\begin{prop}\label{prop:optimaluniversal} 
Let $N$ be minimal such that $( M^{\mu ss}, L, N)$ has
property (1) of Propostion~\ref{prop:universal_1}
and Property (2'), as stated in Lemma~\ref{lem:universal_2}. 
Then, $(M \cong M^{\mu ss}, L, N)$ has the following universal property (2''): for any other triple of a natural number $N'$, a projective variety $M'$, and an ample line bundle $L'$ fulfilling the conditions spelled out in part (1) of Proposition~\ref{prop:universal_1}, 
we have $N|N'$, and 
there exist a uniquely determined morphism $\psi\colon M^{\mu ss} \to M'$ 
such that 
$\psi^*(L') \cong L^{\otimes(N'/N)}$.
\end{prop}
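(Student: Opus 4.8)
The plan is to exploit the injectivity of $\Phi^*$ (Lemma~\ref{lem:pullbackinjective}) in order to reduce everything to a statement about the image of the homomorphism $\Phi^*\colon \mathrm{Pic}(M^{\mu ss}) \to \mathrm{Pic}(S)$. First I would introduce the set $\Gamma \definiere \{k \in \mathbb{Z} : \mathscr{L}_{n-1}^{\otimes k} \in \mathrm{im}(\Phi^*)\}$. Since $\mathrm{im}(\Phi^*)$ is a subgroup of $\mathrm{Pic}(S)$ and $k \mapsto \mathscr{L}_{n-1}^{\otimes k}$ is a group homomorphism, $\Gamma$ is a subgroup of $\mathbb{Z}$, and by injectivity of $\Phi^*$ every $k \in \Gamma$ determines a \emph{unique} line bundle $L_k$ on $M^{\mu ss}$ with $\Phi^*(L_k) \cong \mathscr{L}_{n-1}^{\otimes k}$, compatibly with tensor products, i.e.\ $L_k \otimes L_l \cong L_{k+l}$. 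As the line bundle from Definition~\ref{defi:modulispace} satisfies $\Phi^*(L) \cong \mathscr{L}_{n-1}^{\otimes N}$ by \eqref{eq:L2definition}, the group $\Gamma$ is non-trivial, and I would write $\Gamma = N_0\mathbb{Z}$ with $N_0 \geq 1$.

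Next I would identify the minimal $N$ of the proposition with the generator $N_0$. On the one hand, any integer for which $(M^{\mu ss}, L, N)$ has property (1) lies in $\Gamma_{>0}$, since applying (1) to the universal family $\mathscr{F}$ on the weakly normal variety $S$ gives $\Phi^*(L) \cong \lambda_{\mathscr{F}}(u_{n-1}(c))^{\otimes N} = \mathscr{L}_{n-1}^{\otimes N}$. On the other hand, every $k \in \Gamma_{>0}$ yields an \emph{admissible} triple $(M^{\mu ss}, L_k, k)$: the bundle $L_k$ is ample because $L = L_{N_0}^{\otimes (N/N_0)}$ is ample and ampleness is unaffected by positive tensor powers; property (1) then follows from $\Phi^*(L_k) \cong \mathscr{L}_{n-1}^{\otimes k}$ by the very same frame-bundle computation as in the proof of Proposition~\ref{prop:universal_1}(1); and property (2') follows as in Lemma~\ref{lem:universal_2}, because the morphism $\psi$ produced there arises from the Proj construction applied to the classifying map of the universal family and is therefore independent of the chosen polarisation, the pullback isomorphism being recovered from injectivity of $\Phi^*$. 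Hence the set of admissible integers is exactly $\Gamma_{>0}$, whose minimum is $N_0$, so $N = N_0$.

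With this identification the remaining assertions are immediate. Given a second triple $(M', L', N')$ having property (1), apply it to $\mathscr{F}$ on $S$ to obtain a classifying morphism $\Phi'\colon S \to M'$ with $(\Phi')^*(L') \cong \lambda_{\mathscr{F}}(u_{n-1}(c))^{\otimes N'} = \mathscr{L}_{n-1}^{\otimes N'}$, and let $\psi\colon M^{\mu ss} \to M'$ be the morphism supplied by Property (2'), constructed so that $\Phi' = \psi \circ \Phi$. Then $\Phi^*(\psi^* L') = (\Phi')^*(L') \cong \mathscr{L}_{n-1}^{\otimes N'}$, which shows $N' \in \Gamma = N\mathbb{Z}$, i.e.\ $N \mid N'$. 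Furthermore $\Phi^*(\psi^* L') \cong \mathscr{L}_{n-1}^{\otimes N'} \cong \Phi^*\bigl(L^{\otimes N'/N}\bigr)$, so injectivity of $\Phi^*$ gives $\psi^*(L') \cong L^{\otimes(N'/N)}$ without taking $N$-th powers. For uniqueness, any morphism $\psi$ with $\psi^*(L') \cong L^{\otimes(N'/N)}$ satisfies $\psi^*(L')^{\otimes N} \cong L^{\otimes N'}$ and hence coincides with the unique morphism furnished by Property (2').

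The step I expect to be the main obstacle is the identification $N = N_0$: once the generator of $\Gamma$ is known to be admissible, the divisibility $N \mid N'$ is merely the statement that $\Gamma$ is generated by $N$, and the sharpened pullback isomorphism is merely injectivity of $\Phi^*$ used in its unpowered form. The real work lies in verifying that properties (1) and (2') genuinely persist for \emph{every} intermediate polarisation $L_k$, $k \in \Gamma_{>0}$ --- in particular the ampleness of $L_{N_0}$ and the polarisation-independence of the morphism $\psi$ coming from Lemma~\ref{lem:universal_2} --- so that no larger multiple of $N_0$ could masquerade as the minimal admissible integer.
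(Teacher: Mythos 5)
Your proposal is correct, and it reorganises the paper's argument rather than merely reproducing it. The paper proceeds by contradiction: assuming $N \nmid N'$, it sets $e = \gcd(N,N') < N$, picks $a,b \in \mathbb{Z}$ with $e = aN' + bN$, and forms the auxiliary bundle $A \definiere \psi^*(L')^{\otimes a} \otimes L^{\otimes b}$ on $M^{\mu ss}$; injectivity of $\Phi^*$ (Lemma~\ref{lem:pullbackinjective}) gives $A^{\otimes(N/e)} \cong L$, hence ampleness of $A$, and one then checks that $(M^{\mu ss}, A, e)$ again satisfies (1) and (2'), contradicting minimality of $N$. Your subgroup $\Gamma = \{k : \mathscr{L}_{n-1}^{\otimes k} \in \mathrm{im}(\Phi^*)\}$ is precisely the structural content of that Bézout step, and your admissibility check for the descended bundles $L_k$ --- ampleness via $L_{N_0}^{\otimes(N/N_0)} \cong L$ and injectivity of $\Phi^*$, property (1) via the frame-bundle computation from Proposition~\ref{prop:universal_1}, property (2') via $\psi \circ \Phi = \Phi''$ plus injectivity --- runs line-for-line parallel to the paper's verification for the single bundle $A$. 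What your direct formulation buys is that the divisibility $N \mid N'$ and the unpowered isomorphism $\psi^*(L') \cong L^{\otimes(N'/N)}$ drop out simultaneously from $\Gamma = N\mathbb{Z}$, whereas the paper derives the divisibility from the contradiction and then handles the pullback isomorphism by a separate appeal to the argument of Lemma~\ref{lem:universal_2}; moreover, your version identifies $N$ intrinsically as the index of $\langle \mathscr{L}_{n-1} \rangle \cap \mathrm{im}(\Phi^*)$ in $\langle \mathscr{L}_{n-1} \rangle$, which the paper leaves implicit.

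One point deserves attention, though it does not invalidate the argument: when you conclude $N \in \Gamma$ by "applying (1) to the universal family $\mathscr{F}$ on $S$", you silently identify the classifying morphism $\Phi_{\mathscr{F}}$ of the triple under consideration with $\Phi$ itself. This is automatic if, as in the paper's proof, minimality is taken over polarisations of the fixed variety $M^{\mu ss}$ equipped with the canonical natural transformation; for a completely arbitrary triple one should first compose with the morphism furnished by Property (2') of the original triple so as to land in $\mathrm{im}(\Phi^*)$ --- which is exactly the manoeuvre you perform later for $(M', L', N')$, and which the paper itself performs only implicitly via the identity $\psi \circ \Phi_{\mathscr{E}} = \Phi'_{\mathscr{E}}$. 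So you are operating at the same level of rigour as the published proof.
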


\begin{proof}
Consider a second triple of a natural number $N'$, a projective variety $M'$, and an ample line bundle $L'$ that fulfils the conditions spelled out in item (1) of Proposition~\ref{prop:universal_1}, and let $\psi\colon M \to M'$ be the uniquely determined morphism of Lemma~\ref{lem:universal_2}. To establish the claim, it suffices to show that $N|N'$, as the rest follows by the same argument as in the proof of Lemma~\ref{lem:universal_2}. 

Suppose that $N$ does not divide $N'$ and let
\begin{equation}\label{eq:lcd}
    e=\mathrm{lcd}(N, N') < N                                         
\end{equation}
 be their largest common divisor. There exist $a, b \in \mathbb{Z}$ such that $e = aN' + bN$, and we set $A \definiere  \psi^*(L')^{\otimes a} \otimes L^{\otimes b}$. The pullback of $A$ to $S$ via $\Phi$ equals $\mathscr{L}_{n-1}^{\otimes (aN'+bN)}=\mathscr{L}_{n-1}^{\otimes e}$. From this we infer that $A^{\otimes(N/e)} \cong L$, using the injectivity of $\Phi^*$ provided by Lemma \ref{lem:pullbackinjective}. Hence, $A$ is ample. Aiming for a contradiction to the minimality of $N$, we claim that the triple $(M, A, e)$ has the universal properties (1) and (2'). 

As to property (1), let $B$ be a weakly normal variety, and  let $\mathscr{E}$ be a $B$-flat family of $\mu$-semistable sheaves of class $c$ and determinant $\Lambda$  on $X$. The family $\mathscr{E}$ induces two classifying morphisms $\Phi_{\mathscr{E}}\colon B \to M=M^{\mu ss}$ and $\Phi'_{\mathscr{E}}\colon B \to M'$. The morphism $\psi\colon M\to M'$ given by property (2) for $M$ is such that $\psi\circ \Phi_{\mathscr{E}}= \Phi'_{\mathscr{E}}$, as can be seen from the proof of Proposition~\ref{prop:universal_1}. We conclude that
\begin{equation}\label{eq:twosclassifyingmorphisms}
 \Phi_{\mathscr{E}}^*(A)=(\Phi'_{\mathscr{E}})^*((L')^{\otimes a} )\otimes\Phi_{\mathscr{E}}^*( L^{\otimes b})\cong \lambda_{\mathscr{E}}( {u}_{n-1})^{aN'+bN}= \lambda_{\mathscr{E}}( {u}_{n-1})^e.
\end{equation}
 
 In order to prove that $(M, A, e)$ enjoys property (2'), let $(M'',L'', N'')$ be any other triple having property (1). Then, as in the proof of Lemma~\ref{lem:universal_2} there exists a natural commutative diagram 
\[\begin{xymatrix}{
    & S \ar[ld]_{\Phi} \ar[rd]^{\Phi''}&    \\
M^{\mu ss} \ar[rr]^{\psi} &  &  M''.
}
  \end{xymatrix} 
\]
From the construction and from \eqref{eq:twosclassifyingmorphisms}, we obtain natural isomorphisms
$$\Phi^*(A^{\otimes N''})\cong \mathscr{L}_{n-1}^{\otimes eN''} \cong (\Phi'')^*(L'')^{\otimes e}\cong \Phi^*(\psi^*(L'')^{\otimes e})$$ 
on $S$. Using Lemma~\ref{lem:pullbackinjective} another time, we conclude that $A^{\otimes N''} \cong \psi^*(L''^{\otimes e})$. 

In summary, we have established property (1) and (2') for the new triple $(M, A, e)$. Together with the inequality $\eqref{eq:lcd}$, this yields a contradiction to the minimality of $N$. We therefore conclude that $N|N'$, which is exactly the claim made in Proposition~\ref{prop:optimaluniversal}.
\end{proof}

As an immediate consequence, we obtain in the usual way:
\begin{cor}[Uniqueness]
 The triple $(M^{\mu ss}, L, N)$ with $N$ minimal as above is uniquely determined up to isomorphism by the properties $(1)$ and $(2'')$. 
\end{cor}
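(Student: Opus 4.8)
The plan is to run the standard ``uniqueness from a universal property'' argument, now made available by the sharpened divisibility assertion in property $(2'')$. Suppose $(M_1, L_1, N_1)$ and $(M_2, L_2, N_2)$ are two triples, each satisfying both $(1)$ and $(2'')$. First I would apply property $(2'')$ of the first triple to the second one (which qualifies as an ``other triple'' since it satisfies $(1)$): this yields $N_1 \mid N_2$ together with a uniquely determined morphism $\psi_{12}\colon M_1 \to M_2$ with $\psi_{12}^*(L_2) \cong L_1^{\otimes (N_2/N_1)}$. Symmetrically, property $(2'')$ of the second triple applied to the first gives $N_2 \mid N_1$ and a unique $\psi_{21}\colon M_2 \to M_1$ with $\psi_{21}^*(L_1) \cong L_2^{\otimes (N_1/N_2)}$. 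Combining $N_1 \mid N_2$ with $N_2 \mid N_1$ forces $N_1 = N_2 \rdefiniere N$, so that in fact $\psi_{12}^*(L_2) \cong L_1$ and $\psi_{21}^*(L_1) \cong L_2$.

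Next I would show that $\psi_{12}$ and $\psi_{21}$ are mutually inverse. Consider the composite $\psi_{21} \circ \psi_{12}\colon M_1 \to M_1$; by functoriality of pullback,
\[
(\psi_{21} \circ \psi_{12})^*(L_1) \cong \psi_{12}^*\bigl(\psi_{21}^*(L_1)\bigr) \cong \psi_{12}^*(L_2) \cong L_1.
\]
The crucial point is the \emph{uniqueness} clause in $(2'')$: applying property $(2'')$ of the first triple to itself (the triple $(M_1, L_1, N)$ trivially satisfies $(1)$), there is a \emph{unique} self-morphism of $M_1$ pulling $L_1$ back to $L^{\otimes (N/N)}_1 = L_1$. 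Since the identity is visibly such a morphism, uniqueness forces $\psi_{21} \circ \psi_{12} = \mathrm{id}_{M_1}$. The symmetric computation gives $\psi_{12} \circ \psi_{21} = \mathrm{id}_{M_2}$. Hence $\psi_{12}$ is an isomorphism with $\psi_{12}^*(L_2) \cong L_1$, and as $N_1 = N_2$, the two triples are isomorphic, as claimed.

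I expect no serious obstacle here, as the argument is essentially formal; the one point requiring care is that the uniqueness assertion in $(2'')$ genuinely applies to self-maps, which it does precisely because a triple satisfying $(1)$ may be compared with itself and the identity realises the prescribed line-bundle pullback. This is exactly the extra rigidity that the minimality of $N$ furnished by Proposition~\ref{prop:optimaluniversal} buys over the weaker statement in Proposition~\ref{prop:universal_1}(2): there the undetermined Veronese factor $d$ would have obstructed both the divisibility conclusion $N_1 = N_2$ and the resulting identification of the triples.
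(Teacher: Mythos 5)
Your overall architecture is exactly the ``usual way'' the paper has in mind (the paper gives no written proof, calling the corollary an immediate consequence): compare the two triples in both directions via $(2'')$, extract $N_1\mid N_2$ and $N_2\mid N_1$ to get $N_1=N_2$, and then use the uniqueness clause on the composites to see that $\psi_{12}$ and $\psi_{21}$ are mutually inverse. The divisibility bootstrapping and the pullback computation $(\psi_{21}\circ\psi_{12})^*(L_1)\cong L_1$ are correct as written.

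There is, however, one genuine soft spot, precisely at the step you flag as ``the one point requiring care''. You read the uniqueness in $(2'')$ as: \emph{the unique morphism satisfying the line-bundle pullback condition}, and then conclude $\psi_{21}\circ\psi_{12}=\mathrm{id}_{M_1}$ because the identity also realises that pullback. But uniqueness in that strong literal sense is not what the paper establishes, and it cannot hold in general: if $M'$ admits a nontrivial automorphism preserving the ample bundle $L'$ (polarised automorphisms of moduli spaces do occur, e.g.\ induced by automorphisms of $X$ preserving the data), then composing with it produces a second morphism with the same pullback. What the proofs of Proposition~\ref{prop:universal_1}(2), Lemma~\ref{lem:universal_2} and Proposition~\ref{prop:optimaluniversal} actually show is that $\psi$ is the unique morphism \emph{compatible with the classifying morphisms}, i.e.\ satisfying $\psi\circ\Phi_{\mathscr{E}}=\Phi'_{\mathscr{E}}$ for all families $\mathscr{E}$ --- equivalently, $\psi\circ\Phi=\Phi'$ on $S$, which pins $\psi$ down because $\Phi\colon S\to M^{\mu ss}$ is surjective (its image is dense and, being the image of an $\SL(V)$-invariant morphism, complete by Proposition~\ref{prop:completeimage}). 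Under your strong reading, the hypotheses of the corollary could even be unsatisfiable, emptying the statement of content. The repair is one line: both $\psi_{12}$ and $\psi_{21}$ intertwine the classifying morphisms of the two triples (this is how they arise, cf.\ the remark $\psi\circ\Phi_{\mathscr{E}}=\Phi'_{\mathscr{E}}$ in the proof of Proposition~\ref{prop:optimaluniversal}), hence so does the composite $\psi_{21}\circ\psi_{12}$, and the identity trivially does as well; the compatibility-based uniqueness then forces $\psi_{21}\circ\psi_{12}=\mathrm{id}_{M_1}$, and symmetrically for the other composite. With this emendation your proof coincides with the intended argument.
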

\begin{rem}With Proposition~\ref{prop:optimaluniversal} at hand, we have now established all claims made in the Main Theorem.
\end{rem}

\section{Geometry of the moduli space}\label{sect:geometry}
In the current section we start to investigate the geometry of $M^{\mu ss}$. First, in Section~\ref{subsect:separation} we look at the separation properties of the map $\Phi\colon S \to M^{\mu ss}$, second, in Section~\ref{subsect:compactificationofsimpl} we prove that $M^{\mu ss}$ provides a compactification for the moduli space of $\mu$-stable reflexive sheaves, and third, in Section~\ref{slopevsGM} we investigate the relation between $M^{\mu ss}$ and the Gieseker-Maruyama moduli space in the special case where $H_1 = \ldots = H_{n-1}$. 
\subsection{Separation properties} \label{subsect:separation}The geometry of the map $\Phi$ will be studied in terms of Jordan-H\"older filtrations. Let us introduce the relevant terminology.
\begin{propnot}[Jordan-H\"older filtrations]
Let $X$ be a projective $n$-dimensional manifold, and let $(H_1,\ldots,H_{n-1})$ be a multipolarisation on $X$ with respect to which we consider slope-semistability.
 For a $\mu$-semistable sheaf $F$ on $X$, there exist $\mu$-Jordan-H\"older filtrations (in the sense of \cite[Def.~1.5.1]{HL}). 
Let $gr^{\mu}F$ denote the graded sheaf associated with a $\mu$-Jordan-H\"older filtration with torsion-free factors and set $F^{\sharp}\definiere (gr^{\mu}F)^{\vee\vee}$. Then, $F^{\sharp}$ is a reflexive $\mu$-polystable sheaf on $X$, which depends only on $F$ and not on the chosen Jordan-H\"older filtration.
\end{propnot}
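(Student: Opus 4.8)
The plan is to establish the three assertions — existence of $\mu$-Jordan--Hölder filtrations, reflexivity and $\mu$-polystability of $F^{\sharp}$, and its independence of the chosen filtration — in turn, reducing everything to the single fact that the degree function attached to the complete intersection class $\alpha\definiere H_1\cdot\ldots\cdot H_{n-1}$ behaves exactly like a classical slope: it is additive on short exact sequences and depends only on $c_1$ and the rank. Throughout write $\mu_0\definiere \mu_\alpha(F)$. For \emph{existence} I would argue by Noetherian induction on the rank. Among the nonzero saturated subsheaves $G\subseteq F$ (those with $F/G$ torsion-free) of slope $\mu_0$ — a set containing $F$ itself — choose one, $F_1$, of minimal rank. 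A see-saw argument shows that $F_1$ is $\mu$-stable, since the saturation in $F$ of any intermediate-rank subsheaf of $F_1$ is again saturated of slope $\le\mu_0$, and minimality of $\mathrm{rk}(F_1)$ forces the strict inequality required by stability. As $F_1$ is saturated, $F/F_1$ is torsion-free and, again by see-saw, $\mu$-semistable of slope $\mu_0$; since $\mathrm{rk}(F/F_1)<\mathrm{rk}(F)$, induction yields the filtration with torsion-free $\mu$-stable factors of slope $\mu_0$.

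For the \emph{structure of $F^{\sharp}$}, note that $gr^{\mu}F=\bigoplus_i gr_i$ with each $gr_i$ torsion-free $\mu$-stable of slope $\mu_0$, so $F^{\sharp}=\bigoplus_i gr_i^{\vee\vee}$ is reflexive, being a finite direct sum of reflexive hulls. The point to check is that each $gr_i^{\vee\vee}$ is again $\mu$-stable of slope $\mu_0$: the inclusion $gr_i\hookrightarrow gr_i^{\vee\vee}$ is an isomorphism away from a set of codimension $\ge 2$, so rank and $c_1$, hence the slope, are unchanged, and any intermediate-rank destabilising subsheaf $G\subseteq gr_i^{\vee\vee}$ would meet $gr_i$ in a subsheaf $G\cap gr_i$ of the same rank and $c_1$, hence of slope $\mu_\alpha(G)$, contradicting the stability of $gr_i$. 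Thus $F^{\sharp}$ is a direct sum of $\mu$-stable sheaves of equal slope, i.e.\ $\mu$-polystable and reflexive.

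The main work, and the main obstacle, is \emph{independence} of $F^{\sharp}$, because the category of torsion-free $\mu$-semistable sheaves of slope $\mu_0$ is not abelian (quotients acquire codimension-one torsion) and, crucially, Schur's lemma fails on the nose: a nonzero homomorphism between $\mu$-stable sheaves of slope $\mu_0$ is only injective with cokernel of codimension $\ge 2$, rather than an isomorphism. My plan is to organise the argument lattice-theoretically. First I would record this ``Schur up to codimension two'' statement (its proof is a see-saw estimate on kernel and image, combined with the vanishing of the degree of a rank-zero cokernel of slope $\mu_0$) and deduce that a full-rank injection between $\mu$-stable slope-$\mu_0$ sheaves induces an isomorphism of reflexive hulls, and that the decomposition of a reflexive $\mu$-polystable sheaf into $\mu$-stable summands is unique — here genuine Schur, $\mathrm{End}(E)=\mathbb{C}$ for reflexive stable $E$, does hold. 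It therefore suffices to prove that the multiset $\{gr_i^{\vee\vee}\}$ is independent of the filtration. To this end I would consider the set $\mathcal{L}(F)$ of saturated subsheaves of $F$ of slope $\mu_0$. Applying see-saw to $0\to A\cap B\to A\oplus B\to A+B\to 0$ shows that $A\cap B$ and the saturation $\overline{A+B}$ again lie in $\mathcal{L}(F)$ and that $\mathcal{L}(F)$, with meet $\cap$ and join $\overline{A+B}$, is a \emph{modular lattice of finite length} (length bounded by $\mathrm{rk}(F)$); by the definition of $\mu$-stability, the $\mu$-Jordan--Hölder filtrations with torsion-free factors are precisely its maximal chains.

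Finally I would invoke the Jordan--Hölder--Dedekind theorem for modular lattices: any two maximal chains of $\mathcal{L}(F)$ have the same length, and their covering intervals are matched in projective (transposed) pairs. Each elementary transposition $F'_{j-1}=F'_j\cap F_{i-1}$, $F_i=\overline{F_{i-1}+F'_j}$ yields a sheaf injection $gr'_j\cong F'_j/(F'_j\cap F_{i-1})\hookrightarrow F_i/F_{i-1}=gr_i$ whose image is of full rank (as $F_i=\overline{F_{i-1}+F'_j}$) and of slope $\mu_0$; by the Schur-up-to-codimension-two statement its cokernel has codimension $\ge 2$, so $gr_i^{\vee\vee}\cong (gr'_j)^{\vee\vee}$. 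Composing along the projectivity zig-zags produces a bijection of factors preserving reflexive hulls, which proves $\{gr_i^{\vee\vee}\}=\{(gr'_j)^{\vee\vee}\}$ and hence the independence of $F^{\sharp}$. I expect the delicate point to be the verification that the saturation operation genuinely makes $\mathcal{L}(F)$ modular — the join being $\overline{A+B}$ rather than $A+B$ — which I would settle by checking that saturation commutes with the relevant intersections up to codimension two. This is exactly the place where passing to double duals is \emph{forced} upon us: it is the codimension-two discrepancy between $A+B$ and $\overline{A+B}$ that accounts for $gr^{\mu}F$ being well-defined only after dualising. For the surface case ($n=2$, where the relevant loci are points) this is essentially carried out in \cite[Sect.~1.5 and p.~220ff]{HL}.
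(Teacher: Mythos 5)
Your proposal is correct, but it is a genuinely different (and far more self-contained) route than the paper's, whose entire proof is a two-line citation: existence ``exactly as in \cite[Prop.~1.5.2]{HL}'', uniqueness ``as in \cite[Cor.~1.6.10]{HL}'', together with a pointer to \cite{Miyaoka}. The point of that citation is that the Huybrechts--Lehn arguments use only formal properties of the degree function --- additivity on short exact sequences, non-negativity on torsion sheaves, vanishing on sheaves supported in codimension at least two --- all of which hold for $\deg(\cdot)=c_1(\cdot)\cdot H_1\cdots H_{n-1}$, so they transfer verbatim to multipolarisations. In \cite{HL} the uniqueness mechanism is the abstract Jordan--H\"older theorem applied in the quotient category $\mathrm{Coh}_{n,n-1}(X)$ of coherent sheaves modulo sheaves of codimension at least two, where $\mu$-(semi)stability becomes honest (semi)stability, slope-Jordan--H\"older filtrations become genuine composition series, and the double dual is an invariant of the quotient-category isomorphism class. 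Your modular lattice $\mathcal{L}(F)$ of saturated slope-$\mu_0$ subsheaves is exactly the lattice of semistable subobjects of $F$ in that quotient category, your Dedekind-transposition argument is a direct instantiation of the same Jordan--H\"older uniqueness, and your ``Schur up to codimension two'' lemma plays the role of the observation that $\mu$-stable sheaves become simple objects after passing to the quotient. What your version buys is the avoidance of quotient categories and an explicit identification of where dualising is forced, namely the codimension-two discrepancy created by saturation; what the paper's citation buys is brevity. For the record, the delicate point you flag does go through: for $A,B\in\mathcal{L}(F)$ the see-saw computation forces $\deg(A+B)=\deg\bigl(\overline{A+B}\bigr)$, so within $\mathcal{L}(F)$ saturation changes nothing in codimension one; since two nested saturated subsheaves of equal rank coincide, the modular law $\overline{A+(B\cap C)}=\overline{A+B}\cap C$ for $A\subseteq C$ then follows from the ordinary modular law for subsheaves, which holds at the generic point and hence forces equality of ranks. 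Your existence and polystability steps coincide with the standard arguments the paper implicitly invokes.
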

\begin{proof}
 Existence of $\mu$-Jordan-H\"older filtrations is shown exactly as in \cite[Prop.~1.5.2]{HL}, uniqueness as in \cite[Cor.~1.6.10]{HL}. See also \cite{Miyaoka}. 
\end{proof}

\begin{Not}
For any $\mu$-semistable sheaf $F$ as above we consider the natural map $\imath\colon gr^{\mu}F \to F^\sharp$. 
Since $gr^{\mu}F$ is torsion-free, $\imath$ is injective, and the quotient sheaf $F^{\sharp}/gr^{\mu}F$ is supported in codimension at least two. 
We associate to $F$ the two-codimensional support Chow-cycle of the sheaf  $F^{\sharp}/gr^{\mu}F$, which we will denote by $C_F$. 
\end{Not}
\begin{rems}
a) For later reference, we quickly recall how the cycle $C_F$ can be computed; 
for this we set $T_F\definiere F^{\sharp}/gr^{\mu}F$, and let $C_k$, $k=1, \dots, K$, be the codimension-two components of $\mathrm{supp}(T_F)$. 
Let $X^{(n-2)}$ be a general complete intersection surface of $X$ (with respect to some multipolarisation $(H_1,\ldots,H_{n-2})$ consisting of very ample line bundles), 
and let $p^{(k)}_1, \dots, p^{(k)}_{N_k}$ be the components of the scheme-theoretic intersection $X^{(n-2)} \cap C_k$, which in our situation will be smooth. 
The restriction $T_F|_{X^{(n-2)}}$ is a skyscraper sheaf. The associated natural number \[\mathrm{length}_{\mathscr{O}_{X^{(n-2)}, p_i^{(k)}}} \bigl(T_F|_{X^{(n-2)}}\bigr)_{p_i^{(k)}}\] is independent of $i \in \{1, \dots, N_k\}$, and will be called $m_k$. With these notations at hand, we have \[C_F = \sum_{k= 1}^K m_k C_k \in \mathrm{Chow}_{n-2}(X).\] Moreover, for further reference, in the situation under consideration we let 
\[C_F^{X^{(n-2)}} \definiere  \sum_{k=1}^{K}\sum_{i=1}^{N_k} m_k[p_i^{(k)}] \in \mathrm{Sym}^{*}\bigl(X^{(n-2)}\bigr)\]
 be the $0$-cycle on $X^{(n-2)}$ defined by $T_F|_{X^{(n-2)}}$.

b) The cycle $C_F$ depends only on $F$ and not on the chosen Jordan-H\"older filtration, as can be seen by a reduction to the surface case \cite[Cor.~1.5.10]{HL} using hyperplane sections, and by the description of $C_F$ given in a).
\end{rems}

The connection between Jordan-H\"older filtrations and restriction of $\mu$-semi\-sta\-ble sheaves to curves is established by the following result, variants of which appear throughout the literature.
\begin{Lem}\label{lem:separation}
Let $F_1$, $F_2$ be $\mu$-semistable torsion-free sheaves on $X$. If $m_1$, $\ldots$, $m_{n-1}$ are sufficiently large integers, then $F_1^{\sharp}$ and $F_2^{\sharp}$ are isomorphic if and only if 
the restrictions of $F_1$ and $F_2$ to any general complete intersection curve $X^{(n-1)}\definiere X_1\cap\ldots \cap X_{n-1}$, $X_j\in |m_j H_j|$, are S-equivalent.  
\end{Lem}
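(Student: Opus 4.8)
The plan is to prove both implications by first comparing each $F_i$ with its associated reflexive polystable sheaf $F_i^\sharp$ after restriction to a general curve, and then reducing the whole equivalence to a reconstruction problem for a single $\mu$-stable reflexive sheaf. For the first step, fix a $\mu$-Jordan--H\"older filtration $0=E_0\subset E_1\subset\cdots\subset E_\ell=F$ of a $\mu$-semistable torsion-free sheaf $F$, with $\mu$-stable torsion-free quotients $Q_j\definiere E_j/E_{j-1}$ all of the same slope. If $m_1,\ldots,m_{n-1}$ are large enough, then by the Semistable Restriction Theorem (Proposition~\ref{prop:slopeMR}, applied $n-1$ times) each $Q_j$ restricts to a \emph{stable} bundle $Q_j|_C$ on a general complete intersection curve $C\definiere X^{(n-1)}$, and for general $C$ all the restriction sequences remain exact. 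Hence $F|_C$ carries a filtration with stable quotients $Q_j|_C$ of equal slope, so $F|_C$ is semistable with associated graded $\bigoplus_j Q_j|_C=(gr^\mu F)|_C$; thus $F|_C$ and $(gr^\mu F)|_C$ are S-equivalent. Since the inclusion $gr^\mu F\hookrightarrow F^\sharp$ has cokernel $T_F$ supported in codimension at least two, a general $C$ avoids $\mathrm{supp}(T_F)$, and restricting the inclusion gives $(gr^\mu F)|_C\cong F^\sharp|_C$. Combining, $F|_C$ and $F^\sharp|_C$ are S-equivalent. In particular the condition in the statement is equivalent to the S-equivalence of $F_1^\sharp|_C$ and $F_2^\sharp|_C$, and the implication ``$F_1^\sharp\cong F_2^\sharp\Rightarrow$ restrictions S-equivalent'' becomes immediate.

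It remains to show that S-equivalence of $F_1^\sharp|_C$ and $F_2^\sharp|_C$ on general large-degree curves forces $F_1^\sharp\cong F_2^\sharp$. Write each reflexive polystable sheaf as $F_i^\sharp\cong\bigoplus_k A_{i,k}^{\oplus a_{i,k}}$ with pairwise non-isomorphic $\mu$-stable reflexive summands $A_{i,k}$ of slope $\mu(F_i)$. For general $C$ each summand is locally free along $C$ and, by Proposition~\ref{prop:slopeMR} again, $A_{i,k}|_C$ is \emph{stable}; hence $F_i^\sharp|_C$ is a direct sum of stable bundles, so it coincides with its own Jordan--H\"older graded and its S-equivalence class is just its isomorphism class. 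The hypothesis thus reads $\bigoplus_k(A_{1,k}|_C)^{\oplus a_{1,k}}\cong\bigoplus_k(A_{2,k}|_C)^{\oplus a_{2,k}}$, and by Krull--Schmidt uniqueness of the decomposition into indecomposables (each $A_{i,k}|_C$ is stable, hence indecomposable with $\mathrm{End}=\mathbb{C}$) there is a bijection $\sigma$ of index sets with $A_{1,k}|_C\cong A_{2,\sigma(k)}|_C$ and $a_{1,k}=a_{2,\sigma(k)}$. Matching the summands on $X$ itself, and thereby concluding $F_1^\sharp\cong F_2^\sharp$, then rests on the following reconstruction statement: if $A$ and $B$ are $\mu$-stable reflexive sheaves on $X$ of the same slope with $A|_C\cong B|_C$ for a general complete intersection curve $C$ of sufficiently high degree, then $A\cong B$.

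I expect this reconstruction claim to be the main obstacle. The strategy is to produce a single nonzero homomorphism $\phi\colon A\to B$ and then invoke stability. Once such a $\phi$ is found, injectivity of the restriction map $\Hom_X(A,B)\to\Hom_C(A|_C,B|_C)$ ensures $\phi|_C\neq 0$, hence $\phi|_C$ is an isomorphism because $A|_C,B|_C$ are stable of the same slope; therefore $\mathrm{coker}\,\phi$ is supported away from a general $C$, i.e.\ in codimension at least two, while $\phi$ itself is injective since nonzero maps between $\mu$-stable sheaves of equal slope are injective. Dualising $0\to A\to B\to\mathrm{coker}\,\phi\to 0$ and using that $\mathcal{E}xt^1$ of a codimension $\ge 2$ sheaf into $\mathcal{O}_X$ vanishes, together with reflexivity of $A$ and $B$, then forces $\phi$ to be an isomorphism. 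The injectivity of the restriction map follows by pushing $\mathcal{H}om(A,B)$ through the hyperplane sections cutting out $C$, using $H^0\bigl(X,\mathcal{H}om(A,B)(-m_jH_j)\bigr)=0$ for $m_j\gg 0$ and $\mathcal{H}om(A,B)|_C\cong\mathcal{H}om(A|_C,B|_C)$ for general $C$. The genuine difficulty is the \emph{existence} of a nonzero map, since the groups $H^1\bigl(X,\mathcal{H}om(A,B)(-m_jH_j)\bigr)$ do not vanish for large $m_j$, so restriction need not be surjective on $\Hom$. To overcome this I would let the curve vary: over a dense open subset $U$ of the parameter space $\prod_j|m_jH_j|$ the one-dimensional spaces $\Hom_{C_u}(A|_{C_u},B|_{C_u})$ form a line bundle, and the fibrewise isomorphisms (unique up to scalar) glue to a nonzero homomorphism over the open set $V\subset X$ swept out by the $C_u$; as $V$ contains the complement of a set of codimension at least two and $A,B$ are reflexive, this homomorphism extends over $X$ by the Hartogs-type extension property, yielding the sought nonzero, hence invertible, map $A\to B$. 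This gluing-and-extension step, where reflexivity and the freedom to move $C$ are both essential, is the technical heart of the argument.
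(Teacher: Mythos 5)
Your first half---restricting Jordan--H\"older filtrations via Proposition~\ref{prop:slopeMR} and identifying $(gr^{\mu}F)|_{X^{(n-1)}}$ with $F^{\sharp}|_{X^{(n-1)}}$ because a general curve avoids the codimension-two sheaf $F^{\sharp}/gr^{\mu}F$---is exactly the paper's first step, and it correctly reduces the lemma to showing that $F_1^{\sharp}|_{X^{(n-1)}}\cong F_2^{\sharp}|_{X^{(n-1)}}$ forces $F_1^{\sharp}\cong F_2^{\sharp}$. But your treatment of this converse rests on a false premise. You assert that the groups $H^1\bigl(X,\mathscr{H}om(A,B)(-m_jH_j)\bigr)$ do \emph{not} vanish for $m_j\gg 0$, so that restriction cannot be surjective on $\Hom$. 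This is incorrect: $E\definiere\mathscr{H}om(F_1^{\sharp},F_2^{\sharp})$ is reflexive (since $F_2^{\sharp}$ is), reflexive sheaves have depth at least two at closed points, and the Enriques--Severi--Zariski-type vanishing $H^1\bigl(Z,E(-mH)\bigr)=0$ for $m\gg 0$ holds on any smooth projective $Z$ of dimension at least two; this is \cite[Ch.~IV, Thm.~3.1]{BSt_engl}, and it is precisely the engine of the paper's proof. Since restrictions of reflexive sheaves to general hyperplane sections remain reflexive \cite[Lem.~1.1.12 and Cor.~1.1.14]{HL}, the paper applies this vanishing along the chain $X\supset X^{(1)}\supset\cdots\supset X^{(n-2)}$ (each term of dimension at least two) to lift any section of $E|_{X^{(n-1)}}$ step by step to $H^0(X,E)$ via the sequences $0\to E|_{X^{(i)}}(-X_{i+1})\to E|_{X^{(i)}}\to E|_{X^{(i+1)}}\to 0$; a lifted homomorphism restricting to an isomorphism on the curve is then an isomorphism by (poly)stability, cf.~\cite[Prop.~IV.1.7(2)]{KobayashiVectorBundles}. (The threshold in the vanishing depends on $E$, but that is consistent with the lemma's quantifiers.) In particular, your detour through the polystable decomposition and Krull--Schmidt on the curve is unnecessary: the paper lifts sections of $\mathscr{H}om(F_1^{\sharp},F_2^{\sharp})$ directly.

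Because of the mistaken nonvanishing claim, you replace this direct lifting by a gluing construction, and that construction is not a proof as it stands---you flag it yourself as the ``technical heart'' but only sketch it. The fibrewise isomorphisms $A|_{C_u}\to B|_{C_u}$ are canonical only up to scalar; to glue them over the open set $V$ swept out by the curves you must choose the scalars so that for any two curves $C_u$, $C_{u'}$ through a common point $x$ the resulting values at $x$ agree. That is a descent problem from the incidence variety $\{(u,x)\mid x\in C_u\}$ down to $V$, and nothing in your sketch addresses it: a trivialisation of the line $u\mapsto\Hom_{C_u}(A|_{C_u},B|_{C_u})$ over the parameter space does not by itself produce a section of $\mathscr{H}om(A,B)$ on $V$, the obstruction being exactly the possible nontriviality of this line bundle on the positive-dimensional subfamilies of curves through a fixed point. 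Such gluing arguments can sometimes be carried out (they resemble steps in proofs of Mehta--Ramanathan-type theorems), but they require genuine work which your write-up asserts rather than performs. The repair is simple: discard the gluing and use the vanishing you dismissed, which turns the restriction map $\Hom_X(F_1^{\sharp},F_2^{\sharp})\to\Hom_{X^{(n-1)}}\bigl(F_1^{\sharp}|_{X^{(n-1)}},F_2^{\sharp}|_{X^{(n-1)}}\bigr)$ into a surjection.
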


\begin{proof}
Choose Jordan-H\"older filtrations with torsion-free factors for $F_1$ and $F_2$. If $m_1$, $\ldots$, $m_{n-1}$ are sufficiently large integers, then a general complete intersection curve $X^{(n-1)}=X_1\cap\ldots \cap X_{n-1}$ will have the following properties:
\begin{itemize}
 \item[(i)] the curve $X^{(n-1)}$ avoids the singularities of  $gr^{\mu}F_1$ and $gr^{\mu}F_2$, and 
 \item[(ii)] the restriction of the 
Jordan-H\"older filtrations of  $F_1$ and $F_2$ to $X^{(n-1)}$ are  Jordan-H\"older filtrations for  $F_1|_{X^{(n-1)}}$ and $F_2|_{X^{(n-1)}}$.
\end{itemize}
Item (i) is achievable, since torsion-free sheaves are locally free in codimension one %\cite[Cor.~to Lem.~1.1.8]{OSS_neu};
 i.e., their singularities lie in codimension two or higher. Item (ii) is achievable by the Semistable Restriction Theorem, Proposition~\ref{prop:slopeMR}.

Then, by item~(ii) the restricted sheaves $F_1|_{X^{(n-1)}}$ and $F_2|_{X^{(n-1)}}$  are S-equivalent if and only if $(gr^{\mu}F_1)|_{X^{(n-1)}}\cong (gr^{\mu}F_2)|_{X^{(n-1)}}$, and this in turn is equivalent to $F_1^{\sharp}|_{X^{(n-1)}}\cong F_2^{\sharp}|_{X^{(n-1)}}$ by item (i).

 In the next step, we focus on the reflexive sheaf  $E\definiere \mathscr{H}\!om(F_1^{\sharp},F_2^{\sharp})$. 
If $E$ is any reflexive sheaf on a smooth projective manifold $Z$, then its restriction to a general (smooth) hyperplane section stays reflexive, see \cite[Lem.~1.1.12 and Cor.~1.1.14]{HL}. Moreover, $H^1\bigl(Z, E(-mH)\bigr)$ vanishes when $H$ is ample and $m\gg 0$ by \cite[Ch.~IV, Thm.~3.1]{BSt_engl}. Therefore, if $m_i\gg 0$ and $X_i\in|m_iH_i|$, $1\le i\le n-1$, are general elements, it follows that
\begin{enumerate}
 \item[($\alpha$)] each $X^{(i)}\definiere X_1\cap\ldots \cap X_{i}$ is smooth, 
 \item[($\beta$)] each $E|_{X^{(i)}}$ is reflexive on $X^{(i)}$,
 \item[($\gamma$)] the following cohomology groups vanish: \[H^1\bigl(X, E(-X_1)\bigr)= H^1\bigl(X^{(i-1)},E|_{X^{(i-1)}}(-X_{i})\bigr) = \{0\} \quad \text{ for } 2 \leq i \leq n-1.\]
\end{enumerate}
In this setup, we can lift sections from $H^0(X^{(n-1)}, E|_{X^{(n-1)}})$ to $H^0(X,E)$ using the vanishing in item ($\gamma$) and the exact sequences
\begin{align*}
0\to E|_{X^{(i)}}(-X_{i+1})\to E|_{X^{(i)}}\to  E|_{X^{(i+1)}}\to0.
\end{align*}
It follows that $F_1^{\sharp}|_{X^{(n-1)}}\cong F_2^{\sharp}|_{X^{(n-1)}}$ if and only if $F_1^{\sharp}$ and $F_2^{\sharp}$ are isomorphic on $X$, cf. \cite[Proposition IV.1.7 (2)]{KobayashiVectorBundles}. This completes the proof.
\end{proof} 

We now formulate a first separation criterion, which describes the geometry of $\Phi$ and hence of $M^{\mu ss}$.

\begin{thm}[Separating semistable sheaves in the moduli space]\label{thm:sep}
 Let $F_1$ and $F_2$ be two $(H_1,\ldots,H_{n-1})$-semistable sheaves on the projective manifold $X$ such that  $F_1^{\sharp} \not \cong F_2^{\sharp}$ or $C_{F_1}\neq C_{F_2}$. 
Then, 
$F_1$, $F_2$ give rise to distinct points in   
 $ M^{\mu ss}$.
\end{thm}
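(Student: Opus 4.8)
The plan is to reduce everything to the production of separating invariant sections of $\mathscr{L}_{n-1}$. Since $M^{\mu ss}=\mathrm{Proj}\bigl(\bigoplus_{k}H^0(S,\mathscr{L}_{n-1}^{\otimes kN})^{\SL(V)}\bigr)$ and $\Phi^*L\cong\mathscr{L}_{n-1}^{\otimes N}$, two points $s_1,s_2\in S$ have distinct images under $\Phi$ as soon as some $\SL(V)$-invariant section of a power of $\mathscr{L}_{n-1}$ takes different values (e.g.\ vanishes at one but not the other). Each $F_i$ is $m$-regular for the fixed $m$ of Section~\ref{setup}, so it determines an $\SL(V)$-orbit in $R^{\mu ss}$ and hence a point $s_i\in S$; it therefore suffices to separate $s_1$ and $s_2$ by an invariant section. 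I would split the argument according to the two alternatives in the hypothesis.

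First I would treat the case $F_1^{\sharp}\not\cong F_2^{\sharp}$, which is the curve case handled by the semiampleness machinery. By Lemma~\ref{lem:separation}, for $m_1,\ldots,m_{n-1}\gg0$ and a general complete intersection curve $X^{(n-1)}=X_1\cap\ldots\cap X_{n-1}$ with $X_j\in|m_jH_j|$, the restrictions $F_1|_{X^{(n-1)}}$ and $F_2|_{X^{(n-1)}}$ are semistable (Proposition~\ref{prop:slopeMR}) but not S-equivalent. Choosing the curve generically and applying Corollary~\ref{flatness2} with the orbit closures $\overline{\SL(V)\cdot s_1}$ and $\overline{\SL(V)\cdot s_2}$ among the $S_i$ (Remark~\ref{rem:orbitsmaybeamongtheSi}), the non-flat locus $T$ avoids $s_1,s_2$ and we are in the Setup of Theorem~\ref{thm:semiampleness}. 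Then $\Phi^{(n-1)}\colon\widetilde S\to Q_{X^{(n-1)}}$ sends the $\pi$-fibres over $s_1,s_2$ to the Quot-points of $F_1|_{X^{(n-1)}},F_2|_{X^{(n-1)}}$, which by Lemma~\ref{lem:GIT=semistable}(2) are separated by an invariant section of some power of $\mathscr{L}_0^{(n-1)}$. Pulling this section back along $\Phi^{(n-1)}$, descending along the good quotient $\pi$, and invoking the identification \eqref{eq:longlinebundleiso} produces an invariant section of a power of $\mathscr{L}_{n-1}$ separating $s_1,s_2$.

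Next I would treat the case $C_{F_1}\neq C_{F_2}$, where we may assume $F_1^{\sharp}\cong F_2^{\sharp}$ (otherwise the previous case applies). A general complete intersection curve misses the codimension-two support of $T_{F_i}$, so curves are blind to the cycle; I would instead descend to a general complete intersection surface $X^{(n-2)}$. By the description of $C_F$ in the Remarks preceding the theorem, for general $X^{(n-2)}$ the induced $0$-cycles satisfy $C_{F_1}^{X^{(n-2)}}\neq C_{F_2}^{X^{(n-2)}}$ while $F_1^{\sharp}|_{X^{(n-2)}}\cong F_2^{\sharp}|_{X^{(n-2)}}$, so $F_1|_{X^{(n-2)}}$ and $F_2|_{X^{(n-2)}}$ are semistable sheaves on the surface with identical reflexive hull but distinct associated $0$-cycle. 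On a surface this is exactly the situation separated by the Donaldson--Uhlenbeck determinant line bundle $\lambda(u_1(c^{(n-2)}))$ in the theory of Le~Potier and Jun~Li (cf.~\cite[Chap.~8]{HL}, \cite{LePotierDonaldsonUhlenbeck, JunLiDonaldsonUhlenbeck}), which supplies an invariant section of some power of $\lambda(u_1(c^{(n-2)}))$ on the surface Quot-scheme $Q_{X^{(n-2)}}$ distinguishing the two classes. I would then transfer this section upward exactly as in the previous case, one dimension higher: over the flat locus $S\setminus T$ of Corollary~\ref{flatness2} (with $T$ avoiding $s_1,s_2$) form the surface-level frame bundle $\widetilde S'\to S$ of $p_*\bigl(\mathscr{F}^{(n-2)}(m^{(n-2)})\bigr)$ together with the invariant morphism $\Phi^{(n-2)}\colon\widetilde S'\to Q_{X^{(n-2)}}$, pull back the separating section, and descend. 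Chaining the intermediate isomorphisms of Proposition~\ref{prop:determinantcomparison} for $i=2,\ldots,n-1$ identifies a power of $\lambda(u_1(c^{(n-2)}))$ with a power of $\mathscr{L}_{n-1}$, so the descended object is an invariant section of a power of $\mathscr{L}_{n-1}$ on $S\setminus T$; finally, weak normality of $S$ together with the codimension-two estimate on $T$ lets me extend it over $T$ via Lemma~\ref{sections}. The resulting invariant section separates $s_1,s_2$.

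I expect the genuine difficulty to lie entirely in the second case. Unlike the reflexive hull, the codimension-two cycle is invisible on general curves, and its separation rests on importing the surface Donaldson--Uhlenbeck result, the technical heart of Le~Potier's and Li's work, as an input on $X^{(n-2)}$. Two points then demand care: choosing $X^{(n-2)}$ generically enough to realise \emph{simultaneously} the flatness of the restricted universal family near $s_1,s_2$ (so that $T$ has codimension at least two and avoids $s_1,s_2$), the semistability and $m^{(n-2)}$-regularity of the restrictions, and the inequality of $0$-cycles; and running the frame-bundle descent together with the chained determinant comparison of Proposition~\ref{prop:determinantcomparison} at the intermediate surface level rather than at curve level, before propagating the separating section across the non-flat locus with the weakly-normal extension Lemma~\ref{sections}.
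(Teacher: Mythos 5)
Your overall strategy is the paper's: reduce the theorem to producing $\SL(V)$-invariant sections of powers of $\mathscr{L}_{n-1}$ on $S$ that separate the points $s_1,s_2$, and split according to the two alternatives. Your Case 1 coincides with the paper's Case 1 step for step (Lemma~\ref{lem:separation}, the Semistable Restriction Theorem, Lemma~\ref{lem:GIT=semistable}(2), the frame-bundle lifting and descent from the proof of Theorem~\ref{thm:semiampleness}, with $T$ avoiding $s_1,s_2$ via Remark~\ref{rem:orbitsmaybeamongtheSi}). In Case 2 your packaging differs: the paper never passes through a surface-level Quot scheme or classifying map. Instead it first reduces to \emph{$\mu$-polystable} sheaves, and then re-runs the Huybrechts--Lehn surface argument directly over $S$, using linear lifting maps $l_{\mathscr{F}}^{X^{(n-1)}_i X^{(n-2)}}$ from \emph{curve} Quot schemes into $H^0\bigl(S, \lambda_{\mathscr{F}^{(n-2)}}(u_1(c|_{X^{(2)}}))^{\otimes \nu r_1}\bigr)^{\SL(V)}$ and taking a finite \emph{linear combination} of sections lifted from several curves $X^{(n-1)}_1,\dots,X^{(n-1)}_k$ --- necessary because any single curve meeting $\Supp C_{F_1}=\Supp C_{F_2}$ yields lifted sections vanishing at both points. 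Your two-step descent (surface frame bundle plus the surface Donaldson--Uhlenbeck separation as a black box, then Proposition~\ref{prop:determinantcomparison} and Lemma~\ref{sections}) is a legitimate variant, but it hides this several-curves phenomenon inside the black box and, as written, has two gaps.

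First, you omit the reduction to polystable sheaves and with it the justification that the restrictions carry the moduli data you assert. To claim that $F_1|_{X^{(n-2)}}$ and $F_2|_{X^{(n-2)}}$ have ``identical reflexive hull but distinct associated $0$-cycle'' in the sense required by the surface theorem, you need $(F_i|_{X^{(n-2)}})^{\sharp} \cong F_i^{\sharp}|_{X^{(n-2)}}$ and $C_{F_i|_{X^{(n-2)}}} = C_{F_i}^{X^{(n-2)}}$; the isomorphism $F_1^{\sharp}|_{X^{(n-2)}}\cong F_2^{\sharp}|_{X^{(n-2)}}$ alone does not give this. One must either argue that a Jordan--H\"older filtration of $F_i$ restricts to one of $F_i|_{X^{(n-2)}}$ for generic $X^{(n-2)}$ (Proposition~\ref{prop:slopeMR} applied to the JH factors, plus generic torsion-freeness of the graded pieces), or do what the paper does: degenerate $F_i$ to $gr^{\mu}F_i$ in a flat family over $\mathbb{C}$ and use separatedness of the projective variety $M^{\mu ss}$ to conclude that both map to the same point, so that one may assume $F_1, F_2$ polystable with $F_i \hookrightarrow E$ and zero-dimensional-in-codimension-two quotients $T_i$. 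This step is genuinely needed and is absent from your proposal.

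Second, the surface theory does \emph{not} supply separating invariant sections ``on the surface Quot-scheme $Q_{X^{(n-2)}}$''. Unlike the curve case, where $\mathscr{L}_0^{(n-1)}$ is a GIT linearisation on the full projective Quot scheme and invariant sections exist globally (which is what makes the pullback along $\Phi^{(n-1)}$ unproblematic), on a surface the semiampleness and separation properties of $\lambda(u_1)$ are themselves theorems of Le~Potier and Li, proved precisely by lifting from curves, and they hold only over the (weakly normal) locus of semistable quotients. So your pullback along $\Phi^{(n-2)}$ must be shown to land in that locus (Proposition~\ref{prop:slopeMR} and removal of the torsion locus handle this over $S\setminus T$), and the map must be factored through the weak normalisation of the surface parameter space, using that the frame bundle over the weakly normal $S\setminus T$ is itself weakly normal, as in the proof of Proposition~\ref{prop:universal_1}. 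With these two repairs, your argument closes: the descended section lies in a power of $\lambda_{\mathscr{F}^{(n-2)}}\bigl(u_1(c|_{X^{(2)}})\bigr)$ over $S\setminus T$, Proposition~\ref{prop:determinantcomparison} identifies a power of it $\SL(V)$-equivariantly with a power of $\mathscr{L}_{n-1}$, and Lemma~\ref{sections} extends over $T$, exactly as in the paper.
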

\begin{proof} 
We will use the setup and notation introduced in Section~\ref{setup}. 
We look for invariant sections of $\mathscr{L}_{n-1}^{\otimes \nu}$ on $S=(R^{\mu ss}_{red})^{wn}$ 
that separate the orbits corresponding to $F_1$ and $F_2$. 
For this we follow the proof of Theorem~\ref{thm:semiampleness} and 
restrict the $S$-flat family $\mathscr{F}$  successively to appropriately 
chosen general hyperplane sections $X^{(1)}, \ldots, X^{(n-1)}$. 

\subsubsection*{Simplifying assumptions} 
As said before, we work in the setup and notation introduced in Section~\ref{setup}. 
Similarly to the argument just after Remark~\ref{rem:semistabilityoncurve}, 
it follows from Lemma~\ref{flatness2} (applied to the very ample line bundles $a_iH_i$) and Lemma~\ref{sections} 
that in order to prove the existence of sections in powers of $\mathscr{L}_{n-1}$ that separate $F_1$ and $F_2$, 
we may assume without loss of generality to be in the following setup:
\begin{Setup}For any complete intersection curve $X^{(n-1)}$ 
obtained by intersecting general members $X_i$ of $|a_iH_i|$, $1\le i\le n-1$,
the following holds: if we set $\mathscr{F}^{(i)} \definiere  \mathscr{F}|_{S \times X^{(i)}}$, then
\begin{enumerate}
 \item[(i)] all families $\mathscr{F}^{(i)}$ are $S$-flat, 
 \item[(ii)] the sequences of equivariant $S$-flat sheaves
 \begin{align}
  0  \to \mathscr{F}^{(i-1)}(- X^{(i)}) \to \mathscr{F}^{(i-1)} \to \mathscr{F}^{(i)} \to 0
 \end{align}
are exact.
\end{enumerate} 
\end{Setup}
Our sheaves $F_1$, $F_2$ are represented by points $s_1, s_2\in S$. An application of \cite[Lem.~1.1.12 and Cor.~1.1.14]{HL} allows us to assume that their restrictions to $X^{(i)}$ remain torsion-free for all $i \in \{1, \ldots, n-2\}$.

After these preparatory considerations, the proof of Theorem~\ref{thm:sep} will be divided into two cases.
\subsubsection*{Case 1: Separating sheaves with different $F^\sharp$:} 
If $F_1^{\sharp} \not \cong F_2^{\sharp}$, then by Lemma \ref{lem:separation} and by the Semistable Restriction Theorem, Proposition~\ref{prop:slopeMR}, their restrictions to a general complete intersection curve $X^{(n-1)}$ are locally free, semistable, and not $S$-equivalent. Consequently, it follows from  Lemma~\ref{lem:GIT=semistable} 
that the corresponding points in the Quot scheme $Q_{X^{(n-1)}}$ may be separated by an invariant section $\sigma$ 
in a sufficiently high tensor power $(\mathscr{L}^{(n-1)}_0)^{\otimes \nu}$ of $\mathscr{L}^{(n-1)}_0$. 
The associated lift $l_{\mathscr{F}}^{X^{(n-1)}}(\sigma) \in H^0\bigl(S, (\mathscr{L}_{n-1}\bigr)^{\otimes \nu k})^{\SL(V)}$ 
separates the points $s_1 \in S$ and $s_2 \in S$ corresponding to $F_1$ and $F_2$, respectively; cf.~the proof of Theorem~\ref{thm:semiampleness}. Consequently, $F_1$ and $F_2$ give rise to different points in $M^{\mu ss}$, which is the desired conclusion.

\subsubsection*{Case 2: Separating sheaves with identical $F^\sharp$:} 
Case 1 being already established, we may assume that $F_1$ and $F_2$ are two $\mu$-semistable sheaves on $X$ such that 
\begin{enumerate}
 \item[($\alpha$)] $F_1^\sharp \cong F_2^\sharp$, but
 \item[($\beta$)] $C_{F_1} \neq C_{F_2}$.
\end{enumerate}

Working towards our goal of separating $F_1$ and $F_2$ in $M^{\mu ss}$, we first reduce to the case of polystable sheaves, cf.~the proof of \cite[Thm.~8.2.11]{HL}: If $F$ is $\mu$-semistable, and if $gr^\mu(F)$ is the torsion-free graded sheaf associated with a $\mu$-Jordan-H\"older filtration of $F$, then there exists a flat family $\mathscr{G}$ parametrised by $\mathbb{C}$ such that $\mathscr{G}_0 \cong gr^\mu(F)$ and $\mathscr{G}_t \cong F$ for all $t \neq 0$. As $M^{\mu ss}$ is separated, $F$ and $gr^\mu(F)$ are mapped to the same point $p \in M^{\mu ss}$.

As a consequence of the previous paragraph, in the following we may assume without loss of generality that both $F_1$ and $F_2$ are $\mu$-polystable, with the same double dual $F_1^\sharp \cong F_2^{\sharp} =: E$. Hence, there exist two exact sequences
\[0 \to F_j \to E \to T_j \to 0 \quad \quad j=1,2.\] We let $s_1, s_2 \in S$ be two points with $\mathscr{F}_{s_i} \cong F_i$, $i = 1,2$. 

The idea of our proof is as follows: As the cycles associated with the two sheaves $F_1$ and $F_s$ differ, their respective intersection with any general complete intersection \emph{surface} will still differ. Close reading of the proof of the separation properties of determinant line bundles for families of sheaves on surfaces will then show that there are sections in determinant line bundles associated with families over complete intersection \emph{curves} that will induce separating sections in $\mathscr{L}_{n-1}$, as desired.

By the construction of $C_{F_i}$ and by the Semistable Restriction Theorem, Proposition~\ref{prop:slopeMR}, for any general complete intersection surface $X^{(n-2)}$ there exists an open subset $U_2$ in the linear system $\bigl|m_{n-1} H_{n-1}|_{X^{(n-2)}}\bigr|$ such that 
 the following statements hold for $X^{(n-2)}$ and for any curve $X^{(n-1)} \in U_2$:
\begin{enumerate}
 \item[(i)] The cycles $C_{F_1}^{X^{(n-2)}}$ and $C_{F_2}^{X^{(n-2)}}$ in $\mathrm{Sym}^*(X^{(n-2)})$ do not coincide. 
 \item[(ii)] Both $F_1|_{X^{(n-2)}}$ and $F_2|_{X^{(n-2)}}$ are slope-polystable with respect to the polarisation $H_{n-1}|_{X^{(n-2)}}$
 \item[(iii)] The double dual $F_i|_{X^{(n-2)}}^{\vee \vee}$ is isomorphic to  $E|_{X^{(n-2)}}$ for $i =1,2$.
 \item[(iv)] The sheaf $E|_{X^{(n-1)}}$ is polystable.
\end{enumerate}

It follows from the arguments on p.~223/224 of \cite{HL} or from a slight modification of the proof of Theorem~\ref{thm:semiampleness} that in our setup, spelled out under \emph{Simplifying Assumptions} above, there exist $r_0, r_1 \in \mathbb{N}^+$ such that for every $\nu \geq 1$ there is a linear ``lifting'' map 
\[H^0\bigl(Q_{X^{(n-1)}}, (\mathscr{L}_0^{(n-1)})^{\otimes \nu r_0}  \bigr)^{\SL(V^{(n-1)})} \to  H^0\bigl(S, \lambda_{\mathscr{F}^{(n-2)}} \bigl(u_1(c|_{X^{(2)}}) \bigr)^{\otimes \nu r_1} \bigr)^{\SL(V)},\]
which is induced by restricting the flat family $\mathscr{F}^{(n-2)}$ of sheaves on the surface $X^{(n-2)}$ further down to $S \times X^{(n-1)}$, and which we will call $l_{\mathscr{F}}^{X^{(n-1)}X^{(n-2)}}$.

As the $0$-cycles $C_{F_1}^{X^{(n-2)}}$ and $C_{F_2}^{X^{(n-2)}}$ on $X^{(n-2)}$ do not coincide, we deduce from items (ii)--(iii) above and from \cite[proof of Prop.~8.2.13 and discussion on the lower part of p.~228]{HL} 
that there exist finitely many curves $X^{(n-1)}_1, \dots, X^{(n-1)}_k \in U_2$, a positive natural number $\nu$, and 
sections $\sigma^{(n-1)}_i\in H^0(Q_{X^{(n-1)}_i}, (\mathscr{L}_0^{(n-1)})^{\nu r_1})^{\SL(V^{(n-1)})}$
such that the linear combination 
\[ \sigma^{(n-2)} \definiere  \sum_{i=1}^k  l_{\mathscr{F}}^{X^{(n-1)}_iX^{(n-2)}}(\sigma_i^{(n-1)}) \in H^0\bigl(S, \lambda_{\mathscr{F}^{(n-2)}} \bigl(u_1(c|_{X^{(2)}}) \bigr)^{\otimes \nu r_1} \bigr)^{\SL(V)} \]
separates $s_1$ and $s_2$; i.e., we have
\begin{equation}\label{eq:sigmaseparates}
 0=\sigma^{(n-2)}(s_1) \neq \sigma^{(n-2)}(s_2).
\end{equation}
In our setup, the prerequisites of Proposition~\ref{prop:determinantcomparison} are fulfilled, and hence, some positive tensor power of $\lambda_{\mathscr{F}^{(n-2)}} \bigl(u_1(c|_{X^{(2)}}) \bigr)$ is $\SL(V)$-equivariantly isomorphic to some (other) positive tensor power of $\mathscr{L}_{n-1}$. Via this $\SL(V)$-equivariant isomorphism, the section $\sigma^{(n-2)}$ induces an invariant section $\tau \in H^0\bigl(S, \mathscr{L}_{n-1}^{\otimes s} \bigr)^{\SL(V)}$ for some $s > 0$. Because of \eqref{eq:sigmaseparates}, we have $0 = \tau(s_1) \neq \tau(s_2)$, i.e., $\tau$ separates the two points $s_1$ and $s_2$. These are therefore mapped to different points by the morphism $\Phi\colon S \to M^{\mu ss}$, which implies that $F_1$ and $F_2$ give rise to different points in the moduli space $M^{\mu ss}$. This concludes the proof of Theorem~\ref{thm:sep}. 
\end{proof}

\begin{rem}
Note that in Case 2 of the previous proof the subcase when $\Supp\, C_{F_1}\neq \Supp\, C_{F_2}$ may be dealt with easily 
by using some complete intersection \emph{curve} meeting only one of the two supports. In the remaining subcase,
if $X^{(n-1)}$ meets $\Supp\, C_{F_1}= \Supp\, C_{F_2}$, then all lifted sections will vanish at $s_1$ as well as at $s_2$. As a consequence, the curves $X^{(n-1)}_1, \ldots, X^{(n-1)}_k \in U_2$ used in the proof will in general avoid $\Supp\, C_{F_1}= \Supp\, C_{F_2}$, and none of the individual sections $\sigma^{(n-1)}_i$ needs to vanish at $s_1$. This is why lifting from several curves $X^{(n-1)}_i$ is needed in order to achieve separation in this case.
\end{rem}

We now reduce the problem of identifying the equivalence relation realised by $M^{\mu ss}$ to a question concerning the connectedness of the fibres of the ``Quot to Chow'' morphism \cite{Fog69, Rydh}. 

\begin{Lem}\label{lem:non-separation}
Let $\mathscr{E}$ be a flat family of $(H_1,\ldots,H_{n-1})$-semistable sheaves on $X$ over a (connected) curve $S$, 
such that for all $s\in S$ one has $\mathscr{E}_s^{\sharp}\cong F$ and $C_{\mathscr{E}_s}=C$ 
for some fixed reflexive sheaf $F$ and some fixed cycle $C$ of codimension two on $X$. 
Then, some positive tensor power of the associated determinant line bundle $\mathscr{L}_{n-1}$ on $S$ is trivial.
\end{Lem}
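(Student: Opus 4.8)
The plan is to reduce the claim to the classical picture on a curve: I will show that the restriction of every member of the family to a suitably general complete intersection curve lies in a \emph{single} $S$-equivalence class, so that $\mathscr{L}_{n-1}=\lambda_{\mathscr{E}}\bigl(u_{n-1}(c)\bigr)$, after the comparison of Proposition~\ref{prop:determinantcomparison}, becomes (a power of) the pullback of an ample determinant bundle along a \emph{constant} classifying map. The first step is to exploit the hypotheses to control singularities uniformly in $s$. Since $gr^{\mu}\mathscr{E}_s\hookrightarrow(gr^{\mu}\mathscr{E}_s)^{\vee\vee}=\mathscr{E}_s^{\sharp}\cong F$ has cokernel supported on $\Supp(C_{\mathscr{E}_s})=\Supp(C)$, and since an extension of locally free sheaves is locally free, each $\mathscr{E}_s$ is locally free away from the \emph{fixed} closed set $Z_0\definiere\Sing(F)\cup\Supp(C)$, which has codimension at least two in $X$. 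This uniformity is the decisive consequence of fixing \emph{both} $F$ and $C$.

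Next I would choose the curve. As $S$ is one-dimensional, taking the $S_i$ of Corollary~\ref{flatness2} to be the irreducible components of $S$ (Remark~\ref{rem:orbitsmaybeamongtheSi}(c)) forces the non-flatness locus $T$, which meets each $S_i$ in codimension at least two, to be empty. Hence for a general complete intersection curve $X^{(n-1)}=X_1\cap\dots\cap X_{n-1}$ with $X_j\in|a_jH_j|$ and the $a_j$ chosen as in the proof of Theorem~\ref{thm:semiampleness} (depending only on the topological type $c$), the restricted family $\mathscr{E}^{(n-1)}\definiere\mathscr{E}|_{S\times X^{(n-1)}}$ is flat over all of $S$ and satisfies hypotheses (i)--(ii) of Proposition~\ref{prop:determinantcomparison}. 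Choosing $X^{(n-1)}$ to avoid $Z_0$ and invoking the Semistable Restriction Theorem (Proposition~\ref{prop:slopeMR}, whose threshold depends only on $c$) together with Lemma~\ref{lem:separation}(ii), I would arrange that \emph{for every} $s\in S$ the sheaf $\mathscr{E}_s|_{X^{(n-1)}}$ is locally free and semistable and carries the restriction of a $\mu$-Jordan--H\"older filtration of $\mathscr{E}_s$. Because $X^{(n-1)}$ avoids $\Supp(C)$, the torsion quotient $F/gr^{\mu}\mathscr{E}_s$ restricts to zero, whence $gr\bigl(\mathscr{E}_s|_{X^{(n-1)}}\bigr)\cong gr^{\mu}(\mathscr{E}_s)|_{X^{(n-1)}}\cong F|_{X^{(n-1)}}$. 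Thus all restricted sheaves are semistable and mutually $S$-equivalent, with the common polystable representative $F|_{X^{(n-1)}}$, independently of $s$.

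I would then conclude exactly as in the proof of Theorem~\ref{thm:semiampleness}. Forming the projective frame bundle $\pi\colon\widetilde{S}\to S$ and the induced $\SL(V^{(n-1)})$-invariant morphism $\Phi^{(n-1)}\colon\widetilde{S}\to Q_{X^{(n-1)}}$, the computation \eqref{eq:longlinebundleiso} applies verbatim and yields $(\Phi^{(n-1)})^*(\mathscr{L}_0^{(n-1)})^{\otimes k_0}\cong\pi^*\mathscr{L}_{n-1}^{\otimes k_{n-1}}$ for suitable $k_0,k_{n-1}>0$. Since $\Phi^{(n-1)}(\widetilde{S})$ consists of GIT-semistable points that are all mutually $S$-equivalent, Lemma~\ref{lem:GIT=semistable} shows that $\Phi^{(n-1)}$ followed by the GIT quotient map to the moduli space of semistable sheaves on $X^{(n-1)}$ is constant; as the ample determinant bundle on that moduli space pulls back to a power of $\mathscr{L}_0^{(n-1)}$ over the semistable locus, a suitable power $(\Phi^{(n-1)})^*(\mathscr{L}_0^{(n-1)})^{\otimes\ell}$ is trivial on $\widetilde{S}$. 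Feeding this back into \eqref{eq:longlinebundleiso} gives $\pi^*\mathscr{L}_{n-1}^{\otimes\ell k_{n-1}}\cong\mathscr{O}_{\widetilde{S}}$, and by the same reasoning as in Lemma~\ref{lem:pullbackinjective} (using that $\SL(V^{(n-1)})$ is semisimple, so that the good quotient $\pi$ induces an injective pullback on Picard groups) we conclude that $\mathscr{L}_{n-1}^{\otimes\ell k_{n-1}}$ is trivial on $S$, as claimed.

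The hard part is the uniformity invoked in the second paragraph: producing a \emph{single} complete intersection curve $X^{(n-1)}$ whose restriction is semistable and of the prescribed $S$-equivalence class for \emph{all} $s\in S$ simultaneously. Corollary~\ref{flatness2} only guarantees torsion-freeness of the restriction at finitely many preselected fibres, and the Semistable Restriction Theorem a priori produces a curve depending on the individual sheaf. The point that makes the uniform statement go through is precisely that fixing $F$ and $C$ confines the singularities of the entire family to the fixed codimension-two locus $Z_0$; a general curve avoiding $Z_0$ then restricts to a locally free sheaf for every $s$ at once, after which the uniform threshold of Proposition~\ref{prop:slopeMR} delivers semistability across the whole family.
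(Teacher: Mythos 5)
Your overall strategy is the paper's: pick a single complete intersection curve avoiding all singularities of the family, observe that every restriction $\mathscr{E}_s|_{X^{(n-1)}}$ then lies in the one $S$-equivalence class of $F|_{X^{(n-1)}}$, so the classifying map to the moduli space of semistable sheaves on the curve is constant, and transport the resulting triviality back through Proposition~\ref{prop:determinantcomparison}. (Your detour through the frame bundle, Lemma~\ref{lem:GIT=semistable} and a descent argument as in Lemma~\ref{lem:pullbackinjective} is more roundabout than the paper's direct appeal to the coarse moduli space $M^{ss}_{X^{(n-1)}}$ and \cite[Thm.~8.1.5(2)]{HL}, but that part is sound, as is your neat observation that $\dim S=1$ forces the non-flatness locus $T$ of Corollary~\ref{flatness2} to be empty.)

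However, the step you yourself flag as decisive contains a genuine error: it is \emph{not} true that fixing $F$ and $C$ confines $\Sing(\mathscr{E}_s)$ to the fixed set $Z_0\definiere\Sing(F)\cup\Supp(C)$. The quotient $T_s\definiere F/gr^{\mu}\mathscr{E}_s$ is only supported in codimension at least two, and the hypothesis $C_{\mathscr{E}_s}=C$ pins down merely its \emph{codimension-two} components; $T_s$ may in addition have components of codimension $\geq 3$ whose position varies with $s$. For instance, on a threefold a family whose $T_s$ is a skyscraper sheaf at a moving point $p_s$ has $C_{\mathscr{E}_s}=0$ and constant $\mathscr{E}_s^{\sharp}$, yet $\Sing(\mathscr{E}_s)=\{p_s\}$ moves and is contained in no fixed codimension-two set. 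So a curve avoiding only $Z_0$ need not restrict every $\mathscr{E}_s$ to a locally free sheaf, and $T_s|_{X^{(n-1)}}$ need not vanish. The repair is exactly where $\dim S=1$ enters the paper's proof (you use it only for $T=\emptyset$): the union $B\definiere\bigcup_{s\in S}\Sing(\mathscr{E}_s)$, and likewise $\bigcup_{s\in S}\Supp(T_s)$, is constructible with fixed codimension-two part contained in $\Supp(C)\cup\Sing(F)$ and a varying part of codimension $\geq 3$ swept out over a one-dimensional base, hence of codimension $\geq 2$ altogether, and a general complete intersection curve avoids it. A related, fixable overreach: you apply the Semistable Restriction Theorem (Proposition~\ref{prop:slopeMR}) and Lemma~\ref{lem:separation}(ii) ``for every $s\in S$'' with one curve, but the dense open set of good curves depends on $s$ and you cannot intersect over all of $S$; the paper sidesteps this by demanding semistability of the restriction only for the \emph{single} sheaf $F$ — once the curve avoids $B$ one has $gr^{\mu}(\mathscr{E}_s)|_{X^{(n-1)}}\cong F|_{X^{(n-1)}}$, and semistability of $F|_{X^{(n-1)}}$ forces each restricted Jordan–H\"older factor, and hence each $\mathscr{E}_s|_{X^{(n-1)}}$, to be semistable and $S$-equivalent to $F|_{X^{(n-1)}}$. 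With these two corrections your argument goes through.
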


\begin{proof}
The singular set $\Sing(\mathscr{E}_s)$ of each individual sheaf $\mathscr{E}_s$ consists of a codimension-two part, which is equal to $\Supp (C)$, and a part of higher codimension, which might depend on $s$. The union $B\definiere \bigcup_{s\in S}\Sing(\mathscr{E}_s)$ is thus a constructible set of codimension at least two in $X$.
We may therefore choose  a complete intersection curve $X^{(n-1)}$ that satisfies the conditions of our Proposition \ref{prop:determinantcomparison} and that avoids $B$. Moreover, we observe that $X^{(n-1)}$ may be chosen in such a way that in addition to the above $F|_{X^{(n-1)}}$ is a semistable locally free sheaf on $X^{(n-1)}$.  

With these choices, we have $\mathscr{E}_s|_{X^{(n-1)}}\cong F|_{X^{(n-1)}}$ for all $s \in S$, and 
the restricted family $\mathscr{E}|_{X^{(n-1)}\times S}$ of semistable sheaves on $X^{(n-1)}$ gives rise to a single point in the corresponding moduli space $M^{ss}_{X^{(n-1)}}\bigl(c|_{X^{(n-1)}},\det( F|_{X^{(n-1)}})\bigr)$ of semistable sheaves with the relevant Chern classes and with fixed determinant on the curve $X^{(n-1)}$. 
The ample line bundle $\overline{\mathscr{L}}_0$ on the moduli space $M^{ss}_{X^{(n-1)}}\bigl(c|_{X^{(n-1)}},\det( F|_{X^{(n-1)}})\bigr)$ described in \cite[Thm.~8.1.11]{HL} pulls back by the classifying morphism to a trivial line bundle on $S$, where it is linearly equivalent to the determinant line bundle $\lambda_{\mathscr{E}^{(n-1)}}({u}_0(c|_{X^{(n-1)}}))$ by \cite[Thm.~8.1.5(2)]{HL}. The assertion on $\mathscr{L}_{n-1}$ now follows from the isomorphism \eqref{eq:determinantcomparison} of Proposition \ref{prop:determinantcomparison}. 
\end{proof}

Lemma~\ref{lem:non-separation} implies the following non-separation criterion, which under an additional connectivity assumption gives a converse to Theorem~\ref{thm:sep}, and hence in this case gives a complete sheaf-theoretic description of the equivalence relation realised by $M^{\mu ss}$.
\begin{prop}[Non-separation]\label{prop:non-separation}
Let $F_1$, $F_2$ be two $(H_1,\ldots,H_{n-1})$-semi\-stable sheaves with the same Hilbert polynomial $P$ on $X$ such that
\begin{enumerate}
 \item[(i)] $F_1^{\sharp}\cong F_2^{\sharp}=:F$, and
 \item[(ii)] $C_{F_1}=C_{F_2}=:C$.
\end{enumerate}
Suppose in addition that the isomorphism classes of the sheaves $F/gr^{\mu}F_1$ and $F/gr^{\mu}F_2$ lie in the same connected component 
of the fibre over $C$ of the canonical morphism from the seminormalisation of the Quot scheme $\mathrm{Quot}(F, P_F-P)$ 
to the Chow variety $\mathrm{Chow}_{n-2}(X)$ of cycles of codimension two on $X$.
Then, 
$F_1$ and $F_2$ give rise to the same point in   
 $ M^{\mu ss}$. 
\end{prop}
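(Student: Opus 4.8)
The plan is to deduce the statement from Lemma~\ref{lem:non-separation} by joining $F_1$ and $F_2$ inside the moduli space through a chain of flat families of $\mu$-semistable sheaves over connected curves, along each of which both $F^{\sharp}$ and the codimension-two cycle stay constant. As a first, separate reduction I would invoke the degeneration argument from Case~2 of the proof of Theorem~\ref{thm:sep}: for each $i$ there is a flat family over $\mathbb{A}^1$ with special fibre $gr^{\mu}F_i$ and general fibre $F_i$, so that separatedness of $M^{\mu ss}$ forces $F_i$ and $gr^{\mu}F_i$ to define the same point. It therefore suffices to prove that $gr^{\mu}F_1$ and $gr^{\mu}F_2$ are sent to the same point.

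Set $F\definiere F_1^{\sharp}\cong F_2^{\sharp}$, a reflexive $\mu$-polystable sheaf, and $T_i\definiere F/gr^{\mu}F_i$. Since $gr^{\mu}F_i$ and $F$ share the same rank and first Chern class, $T_i$ has Hilbert polynomial $P_F-P$ of degree $\le n-2$, hence is supported in codimension $\ge 2$; the quotients $[F\twoheadrightarrow T_i]$ are precisely the two points of $\mathrm{Quot}(F,P_F-P)$ appearing in the hypothesis, with kernels $gr^{\mu}F_1$ and $gr^{\mu}F_2$. On $\mathrm{Quot}(F,P_F-P)\times X$ I would consider the universal sequence $0\to\mathscr{K}\to \mathrm{pr}_X^{*}F\to\mathscr{T}\to 0$; as $\mathrm{pr}_X^{*}F$ and $\mathscr{T}$ are flat over the Quot scheme, so is $\mathscr{K}$, and each fibre $\mathscr{K}_t=\ker(F\to T_t)$ is a torsion-free subsheaf of $F$ with cokernel of codimension $\ge 2$. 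Consequently every $\mathscr{K}_t$ is $\mu$-semistable with $\mu(\mathscr{K}_t)=\mu(F)$ and $\mathscr{K}_t^{\vee\vee}=F$; restricting to a general complete intersection curve disjoint from $\mathrm{Sing}(\mathscr{K}_t)$ and applying Lemma~\ref{lem:separation} to the pair $(\mathscr{K}_t,F)$ gives $\mathscr{K}_t^{\sharp}\cong F$. Finally, the image of $[F\to T_t]$ under the Quot-to-Chow morphism of \cite{Fog69,Rydh} is the support cycle of $T_t$, which I would identify with $C_{\mathscr{K}_t}$ by means of the surface description of the cycle recalled before Theorem~\ref{thm:sep}.

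Granting these identifications, the seminormalisation map of $\mathrm{Quot}(F,P_F-P)$ is a homeomorphism, so the connected component of the fibre over $C$ through $[F\to T_1]$ and $[F\to T_2]$ is connected and projective. If the two points coincide we are done; otherwise this component is positive-dimensional and the two points may be joined by a finite chain of integral curves $B_1,\ldots,B_\ell$ inside the fibre with $B_j\cap B_{j+1}\neq\emptyset$. Let $\widetilde B_j\to B_j$ be the normalisation, a smooth --- hence weakly normal --- connected projective curve, and let $\mathscr{G}^{(j)}$ be the pullback of $\mathscr{K}$ to $\widetilde B_j\times X$. Each $\mathscr{G}^{(j)}$ is a flat family of $\mu$-semistable sheaves with $(\mathscr{G}^{(j)}_t)^{\sharp}\cong F$ and $C_{\mathscr{G}^{(j)}_t}=C$ for all $t$, so Lemma~\ref{lem:non-separation} shows that a positive power of the associated determinant line bundle $\mathscr{L}_{n-1}$ on $\widetilde B_j$ is trivial. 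By Proposition~\ref{prop:universal_1}(1) the classifying morphism $\Phi_{\mathscr{G}^{(j)}}\colon\widetilde B_j\to M^{\mu ss}$ satisfies $\Phi_{\mathscr{G}^{(j)}}^{*}L\cong\mathscr{L}_{n-1}^{\otimes N}$; hence a power of the ample bundle $L$ pulls back to a trivial bundle on the projective curve $\widetilde B_j$, which forces $\Phi_{\mathscr{G}^{(j)}}$ to be constant. As consecutive curves meet, the endpoints $gr^{\mu}F_1$ and $gr^{\mu}F_2$ are mapped to one and the same point, and together with the first reduction this proves that $F_1$ and $F_2$ define the same point of $M^{\mu ss}$.

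I expect the main obstacle to be the identification of $C_{\mathscr{K}_t}$ with the effective support cycle of $T_t$ produced by the Quot-to-Chow morphism: one must reconcile the Jordan-H\"older definition of the cycle (via $F^{\sharp}/gr^{\mu}(-)$) with the cokernel $F/\mathscr{K}_t$, uniformly along the family and including the behaviour of the higher-codimensional part of $\mathrm{Sing}(\mathscr{K}_t)$. This should follow from the surface computation of the cycle together with the fact that a sheaf and its Jordan-H\"older graded carry the same class in $K(X)_{num}$, but it is the step that requires genuine care. The remaining ingredients --- flatness of $\mathscr{K}$, chain-connectedness of a connected projective variety by integral curves, and the principle that a morphism from a projective curve pulling an ample bundle back to a torsion bundle is constant --- are standard.
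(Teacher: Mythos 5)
Your argument is correct and is essentially the paper's own (largely implicit) derivation: the paper presents Proposition~\ref{prop:non-separation} as a direct consequence of Lemma~\ref{lem:non-separation}, and your proof supplies exactly the intended intermediate steps --- the degeneration of $F_i$ to $gr^{\mu}F_i$ via separatedness of $M^{\mu ss}$, the flat family of kernels $\mathscr{K}$ over $\mathrm{Quot}(F,P_F-P)$ whose fibres are $\mu$-semistable with $\mathscr{K}_t^{\sharp}\cong F$ and determinant $\Lambda$, the chain of curves in the connected fibre component over $C$, and the constancy of each classifying morphism because a power of the ample bundle $\mathscr{O}_{M^{\mu ss}}(1)$ pulls back trivially. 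Your treatment of the one delicate point --- identifying $C_{\mathscr{K}_t}$ with the support cycle of $T_t$ via the equality of the localized $K$-classes of $\mathscr{K}_t$ and $gr^{\mu}\mathscr{K}_t$ at codimension-two generic points --- is also sound.
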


\begin{rem}
The connectedness and even the irreducibility of the fibres of the $\mathrm{Quot}$-to-$\mathrm{Chow}$ morphism is known when $X$ is two-dimensional \cite{EllLehn}, but does not hold in general. In absence of such a connectedness result, 
our criterion only says that the number of points in $M^{\mu ss}$ which give the same tuple $(F^{\sharp},C_F)$ is finite 
and bounded by the number of connected components of the fibre over $C_F$ of the morphism from $\Quot(F^{\sharp}, P_{F^{\sharp}}-P_F)$ to $\Chow_{n-2}(X)$.
\end{rem}
It is an interesting problem to identify situations in which Proposition~\ref{prop:non-separation} hold unconditionally, which we will pursue in future work.

\subsection{$M^{\mu ss}$ as a compactification of the moduli space of $\mu$-stable reflexive sheaves}\label{subsect:compactificationofsimpl}
By work of Altman and Kleiman \cite[Thm.~7.4]{AltmanKleiman} (in the algebraic category) as well as by Kosarew and Okonek~\cite{KosarewOkonek} and Schumacher \cite{SchumacherKaehlerModuli} (in the analytic category), there exists a (possibly non-separated) coarse moduli space $M_{sim}$ for isomorphism classes of simple coherent sheaves on a fixed projective variety $X$. Since every $\mu$-stable sheaf is simple by \cite[Cor.~1.2.8]{HL}, it is a natural task to compare $M_{sim}$ with the newly constructed moduli space $M^{\mu ss}$. In fact, we will show that $M^{\mu ss}$ provides a natural compactification for the moduli space of $\mu$-stable reflexive sheaves (of fixed topological type) on $X$. 

Note that by \cite[Ch.~V, Thm.~2.8]{BSt_engl} and the characterization of reflexive sheaves given in \cite[Prop.~1.1.10(4)]{HL}, reflexivity is an open condition in the base space of a flat (proper) family. Let $M_{ \mathrm{refl}}^{\mu s}$ be the locally closed subscheme of $M_{sim}$ representing (isomorphism classes of) $\mu$-stable reflexive sheaves with class $c$ and determinant $\Lambda$. It follows for example from \cite[Prop.~6.6]{KosarewOkonek} and \cite[Cor.~7.12]{KobayashiVectorBundles} that $\big(M_{\mathrm{refl}}^{\mu s}\big)_{\mathrm{red}}$ is a separated quasi-projective variety. 

\begin{thm}[Compactifying the moduli space of stable reflexive sheaves]\label{thm:compactificationofsimple}
There exists a natural morphism 
$$\phi\colon \bigl(M_{ \mathrm{refl}}^{\mu s}\bigr)^{wn} \to M^{\mu ss} $$
that embeds $\bigl(M_{ \mathrm{refl}}^{\mu s}\bigr)^{wn}$ as a Zariski-open subset of $M^{\mu ss}$.
\end{thm}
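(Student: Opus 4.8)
The plan is to realise both the source $\bigl(M_{\mathrm{refl}}^{\mu s}\bigr)^{wn}$ and its prospective image inside $M^{\mu ss}$ as geometric quotients of one and the same $\SL(V)$-variety, and then to invoke uniqueness of geometric quotients. First I would isolate inside $S=(R^{\mu ss}_{red})^{wn}$ the locus $S^{\mu s}_{\mathrm{refl}}$ of points representing $\mu$-stable reflexive sheaves. This subset is $\SL(V)$-invariant and, by openness of $\mu$-stability together with the openness of reflexivity in flat families recalled just before the statement, it is Zariski-open in $S$; in particular it is itself weakly normal. Over this locus the $\SL(V)$-action has finite stabilisers and closed orbits, since $\mu$-stable sheaves are simple and their orbits are closed, so a geometric quotient exists. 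By the standard identification of framed Quot-scheme quotients with the coarse moduli space of $\mu$-stable sheaves, together with the compatibility of weak normalisation with geometric quotients by reductive groups, this quotient is precisely $\bigl(M_{\mathrm{refl}}^{\mu s}\bigr)^{wn}$. The restriction $\Phi|_{S^{\mu s}_{\mathrm{refl}}}$ of the morphism $\Phi\colon S\to M^{\mu ss}$ from Definition~\ref{defi:modulispace} is $\SL(V)$-invariant and therefore descends to the desired morphism $\phi$.

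Next I would check that $\phi$ is injective with open image. Injectivity is immediate from Theorem~\ref{thm:sep}: two non-isomorphic $\mu$-stable reflexive sheaves $F_1,F_2$ satisfy $F_i^\sharp\cong F_i$ and $C_{F_i}=0$, so that $F_1^\sharp\not\cong F_2^\sharp$, and the theorem places them at distinct points of $M^{\mu ss}$. For openness the crucial point is that $S^{\mu s}_{\mathrm{refl}}$ is $\Phi$-saturated, i.e. that $\Phi^{-1}\bigl(\Phi(S^{\mu s}_{\mathrm{refl}})\bigr)=S^{\mu s}_{\mathrm{refl}}$. This follows from the contrapositive of Theorem~\ref{thm:sep}: if $\Phi(s)=\Phi(s')$ then $\mathscr{F}_s^\sharp\cong\mathscr{F}_{s'}^\sharp$ and $C_{\mathscr{F}_s}=C_{\mathscr{F}_{s'}}$; when $\mathscr{F}_s$ is stable reflexive, $\mathscr{F}_{s'}^\sharp$ is stable and $C_{\mathscr{F}_{s'}}=0$, which by the Jordan--H\"older description forces $\mathscr{F}_{s'}$ to have a single Jordan--H\"older factor, hence to be stable, and to coincide with its own double dual, hence to be reflexive. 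Since $\Phi$ behaves as a good quotient (this is exactly the content of Lemma~\ref{lem:categorical} together with Proposition~\ref{prop:completeimage}), it is surjective and submersive, so the image of the saturated open set $S^{\mu s}_{\mathrm{refl}}$ is open.

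Finally, writing $U\definiere\Phi(S^{\mu s}_{\mathrm{refl}})\subset M^{\mu ss}$ for this open set, I would argue that $\phi$ is an isomorphism onto $U$. By the saturation just established, $\Phi^{-1}(U)=S^{\mu s}_{\mathrm{refl}}$, and the induced surjection $\Phi|\colon S^{\mu s}_{\mathrm{refl}}\to U$ has the $\SL(V)$-orbits as its fibres: constancy on orbits is clear, and separation of distinct orbits is again Theorem~\ref{thm:sep}. Being the restriction of the good quotient $\Phi$ to a saturated open subset contained in the stable locus, the map $\Phi|$ is itself a geometric quotient. Thus $U$ and $\bigl(M_{\mathrm{refl}}^{\mu s}\bigr)^{wn}$ are both geometric quotients of $S^{\mu s}_{\mathrm{refl}}$ by $\SL(V)$, and by the uniqueness of geometric quotients the natural morphism $\phi$ between them is an isomorphism, which yields the claimed open embedding.

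I expect the main obstacle to be the quotient-theoretic bookkeeping rather than the sheaf theory: one must verify carefully that $\Phi$ genuinely behaves as a good quotient (so that saturated open sets have open image and invariant morphisms descend) and, more delicately, that forming the weak normalisation commutes with passing to the geometric quotient by the reductive group $\SL(V)$, so that the quotient of the weakly normal space $S^{\mu s}_{\mathrm{refl}}$ is indeed $\bigl(M_{\mathrm{refl}}^{\mu s}\bigr)^{wn}$. The separation input --- both injectivity and saturation --- is handed to us cleanly by Theorem~\ref{thm:sep}, so the genuinely delicate work lies entirely in matching the two quotient constructions and in controlling weak normalisation under quotients.
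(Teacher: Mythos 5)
Your skeleton---restrict $\Phi$ to the stable-reflexive locus $S^{\mu s}_{\mathrm{refl}}\subset S$, descend to the quotient, use Theorem~\ref{thm:sep} for injectivity and saturation---is the same as the paper's, but the two points you defer as ``quotient-theoretic bookkeeping'' are exactly where the argument breaks, and the paper spends most of its proof on them. First, ``finite stabilisers and closed orbits, so a geometric quotient exists'' is not a valid inference: to obtain the quotient the paper proves that the free $\PGL(V)$-action on $S^{\mu s}_{\mathrm{refl}}$ is \emph{proper}, constructing local slices from local universal families over the moduli space of simple sheaves (Kosarew--Okonek, Altman--Kleiman) and checking Hausdorffness of the orbit space via the classifying map to that moduli space; only then does Koll\'ar's theorem yield a geometric quotient, a priori merely as an algebraic space, which is afterwards identified with $\bigl(M^{\mu s}_{\mathrm{refl}}\bigr)^{wn}$. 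There is no ``standard identification of framed Quot-scheme quotients with the coarse moduli space'' to invoke here, because the moduli space of $\mu$-stable reflexive sheaves is not a GIT quotient. Second, $\Phi\colon S\to M^{\mu ss}$ is \emph{not} a good quotient, and Lemma~\ref{lem:categorical} together with Proposition~\ref{prop:completeimage} does not make it one: a fibre of $\Phi$ can contain several disjoint closed invariant subsets (distinct polystable sheaves with the same pair $(F^{\sharp},C_F)$), which is incompatible with the separation property of good quotients, and no submersiveness is known. So you may not conclude that saturated open sets have open image ``because $\Phi$ is a good quotient'', nor that $\Phi|_{S^{\mu s}_{\mathrm{refl}}}$ is a geometric quotient; the appeal to uniqueness of geometric quotients is then circular, since proving $\Phi|$ is a geometric quotient is essentially the theorem itself. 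The paper substitutes bespoke arguments: openness of the image via $U=M^{\mu ss}\setminus\Phi(A)$ with $A=S\setminus S^{\mu s}_{\mathrm{refl}}$, whose image is closed by the Langton-based Proposition~\ref{prop:completeimage}; closedness of $\phi$ onto $U$ by the same device applied to orbit closures; and, crucially, the upgrade from ``bijective open morphism'' to ``isomorphism'' uses the \emph{weak normality} of $M^{\mu ss}$ to make the continuous set-theoretic inverse regular. This last step controls the structure sheaf, and no abstract quotient uniqueness can replace it.

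There is also a genuine error in your saturation argument. From $\Phi(s)=\Phi(s')$ with $\mathscr{F}_s$ stable reflexive, the contrapositive of Theorem~\ref{thm:sep} indeed gives $\mathscr{F}_{s'}^{\sharp}\cong\mathscr{F}_s$ and $C_{\mathscr{F}_{s'}}=0$, and stability of the double dual correctly forces a single Jordan--H\"older factor, hence stability of $\mathscr{F}_{s'}$. But $C_F=0$ does \emph{not} imply $F=F^{\vee\vee}$ once $\dim X\geq 3$: the cycle $C_F$ only records the codimension-two part of the support of $F^{\vee\vee}/F$, and for instance the ideal sheaf of a point on a threefold has $C_F=0$ without being reflexive. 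The correct argument uses that every sheaf parametrised by $S$ has the same class $c\in K(X)_{num}$, with all Chern classes fixed: from $\mathscr{F}_{s'}\hookrightarrow\mathscr{F}_{s'}^{\vee\vee}\cong\mathscr{F}_s$ and $[\mathscr{F}_{s'}]=[\mathscr{F}_s]=c$ the quotient has trivial class in $K(X)_{num}$, hence trivial Hilbert polynomial, hence vanishes, so $\mathscr{F}_{s'}$ is reflexive. With this fix saturation holds, and it is this statement (packaged via Lemma~\ref{lem:separation}) that the paper uses in Steps~3 and~4 of its proof.
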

\begin{proof}
The proof is divided into five steps.
\subsubsection*{Step 1: constructing the map $\phi$:}
As in Section~\ref{subsect:proof}, let $R^{\mu s}$ and $R^{\mu ss}$ denote the locally closed subschemes (of the Quot-scheme used to construct $M^{\mu ss}$) of all $\mu$-stable, or $\mu$-semistable quotients with the chosen invariants, respectively. Moreover, let $R^{\mu s}_{\mathrm{refl}}$ denote the subscheme of reflexive $\mu$-stable quotients. Furthermore, we let $S = (R^{\mu ss})^{wn}$ and $S^{\mu s}_{\mathrm{refl}} = (R^{\mu s}_{\mathrm{refl}})^{wn}$ be the respective weak normalisations, and we note that we have a natural $\SL(V)$-equivariant inclusion 
\begin{equation}\label{eq:inclusion}
  S^{\mu s}_{\mathrm{refl}} \hookrightarrow S.
\end{equation}
It follows from \cite[Lem.~4.3.2]{HL} that the centre of $\SL(V)$ acts trivially on all the spaces introduced above; i.e., the action of $\SL(V)$ factors over $\PGL(V)$, and moreover that the respective actions of $\PGL(V)$ on $R^{\mu s}$ and on $(R^{\mu s})^{wn}$ are set-theoretically free.

We will show that the $\PGL(V)$-action on $S^{\mu s}_{\mathrm{refl}}$ is proper. For this, it suffices to show that the action on $(S^{\mu s}_{\mathrm{refl}})^{an} =: \mathcal{S}$ is proper in the topological sense. As this action is set-theoretically free, it suffices to establish the following two properties: 
\begin{enumerate}
 \item[($\alpha$)] The quotient topology on $\mathcal{S}/\PGL(V)$ is Hausdorff, and 
 \item[($\beta$)] there exists \emph{local slices} through every point of $\mathcal{S}$; i.e., through every point $s \in \mathcal{S}$ there exists a locally closed analytic subset $s \in T \subset \mathcal{S}$ such that $\PGL(V)\acts T$ is open in $\mathcal{S}$ and such that the map $\PGL(V) \times T \to \PGL(V)\acts T \subset \mathcal{S}$ is biholomorphic.
\end{enumerate}
If we set $\mathcal{M}\definiere  \bigl((M_{ \mathrm{refl}}^{\mu s})^{wn}\bigr)^{an}$, then $\mathcal{M}$ is the (analytic) coarse moduli space for families of $\mu$-stable reflexive sheaves with the chosen invariants parametrised by weakly normal complex base spaces. Consequently, the restriction of the universal family from $R$ to $\mathcal{S}$ gives rise to a holomorphic classifying map $\pi\colon~\mathcal{S} \to \mathcal{M}$. Since isomorphism classes of sheaves parametrised by $\mathcal{S}$ are realised by the $\PGL(V)$-action, cf.~\cite[Sect.~4.3]{HL}, the map $\pi$ induces an injective continuous map $\mathcal{S}/\PGL(V) \to \mathcal{M}$. Since $\mathcal{M}$ is Hausdorff, $\mathcal{S}/\PGL(V)$ is likewise Hausdorff. This shows ($\alpha$).

Let now $s_0 \in \mathcal{S}$, and $\pi(s_0)$ the corresponding point in the moduli space $\mathcal{M}$. Then, we may find an open neighbourhood $W$ of $\pi(s_0)$ in $\mathcal{M}$ such that there exists a universal family $\mathscr{U}$ over $W \times X$, see~\cite[Thm.~6.4]{KosarewOkonek} or \cite[Thm.~7.4]{AltmanKleiman}. After shrinking $W$ if necessary, the family $\mathscr{U}$ induces a holomorphic section $\sigma\colon W \to \pi^{-1}(W) \subset \mathcal{S}$ of $\pi|_{\pi^{-1}(W)}$ through $s_0 \in \mathcal{S}$. Since every fibre of $\pi$ is a $\PGL(V)$-orbit, we conclude that $\PGL(V) \acts \sigma(W) = \pi^{-1}(W)$ is open in $\mathcal{S}$. Moreover, since $\pi|_{\sigma(W)}\colon \sigma(W) \to W$ is biholomorphic, hence bijective, and since $\mathcal{M}$ parametrises isomorphism classes of $\mu$-stable reflexive sheaves, for any $s \in \sigma(W)=:T$ we have $\PGL(V)\acts s \cap \sigma(W) = \{s\}$.
As a consequence, the natural map $\eta\colon \PGL(V) \times T \to \PGL(V)\acts T$ is holomorphic, open, and bijective. As $\PGL(V)\acts T \subset \mathcal{S}$ is weakly normal, $\eta$ is therefore biholomorphic; i.e., $T = \sigma(W)$ is a local holomorphic slice through $s_0$.

To sum up, we have established that $\PGL(V)$ acts properly on $S^{\mu s}_{\mathrm{refl}}$. As a consequence, the geometric quotient $S^{\mu s}_{\mathrm{refl}} / \PGL(V)$ exists in the category of algebraic spaces \cite{KollarQuotients}, and once the existence of this quotient is established, it is rather straightforward to see that in fact $(M^{\mu s}_{\mathrm{refl}})^{wn} \cong S^{\mu s}_{\mathrm{refl}} / \PGL(V)$. By abuse of notation, we will denote the corresponding quotient  map $S^{\mu s}_{\mathrm{refl}} \to (M^{\mu s}_{\mathrm{refl}})^{wn}$ by $\pi$.

Recalling the inclusion \eqref{eq:inclusion}, we may restrict the $\SL(V)$-invariant and hence $\PGL(V)$-invariant map $\Phi$ (cf.~Definition~\ref{defi:modulispace}) to $S^{\mu s}_{\mathrm{refl}}$. As $\pi$ is a categorical quotient, the resulting map descends to a regular morphism $\phi\colon (M_{ \mathrm{refl}}^{\mu s})^{wn} \to M^{\mu ss}$ completing the following diagram:
\begin{equation}\begin{gathered}
 \begin{xymatrix}
  { S^{\mu s}_{\mathrm{refl}}\ar@{^(->}[r] \ar[d]_{\pi} & S  \ar[d]^{\Phi} \\
 (M^{\mu s}_{\mathrm{refl}})^{wn} \ar[r]^>>>>>{\phi}&  M^{\mu ss}.
}
 \end{xymatrix}
 \end{gathered}
\end{equation}
This concludes the construction of the desired map from $(M^{\mu s}_{\mathrm{refl}})^{wn}$ to $M^{\mu ss}$.

\subsubsection*{Step 2: $\phi$ is injective:} This follows immediately from Lemma~\ref{lem:separation}.

\subsubsection*{Step 3: $\phi\big((M^{\mu s}_{\mathrm{refl}})^{wn}\big)$ is open:} The set $A: = S \setminus S^{\mu s}_{\mathrm{refl}}$ is an $\SL(V)$-invariant closed subvariety of $S$. It follows from Proposition~\ref{prop:completeimage} that its image $\Phi(A) \subset M^{\mu ss}$ is closed. Furthermore, as a consequence of Lemma~\ref{lem:separation} we deduce that $A$ is $\Phi$-saturated, i.e., $\Phi^{-1}(\Phi (A)) = A$. Consequently, the set $$U \definiere  \phi\big((M_{\mathrm{refl}}^{\mu s})^{wn}\big) = \Phi\big(S^{\mu s}_{\mathrm{refl}}\big) = M^{\mu ss} \setminus \Phi(A)$$ is open, as claimed. 

\subsubsection*{Step 4: $\phi$ is open as a map onto its image $U$:} Since we have already seen that $\phi\colon (M^{\mu s}_{\mathrm{refl}})^{wn} \to U$ is bijective, it suffices to show that $\phi\colon (M^{\mu s}_{\mathrm{refl}})^{wn} \to U$ is closed. Let $\hat Z$ be any closed subvariety of $(M^{\mu s}_{\mathrm{refl}})^{wn}$. Then, let $Z \subset S^{\mu s}_{\mathrm{refl}}$ be its preimage under $\pi$, and $\overline{Z}$ the closure of $Z$ in $S$, which is automatically $\SL(V)$-invariant. As a consequence of Proposition~\ref{prop:completeimage}, the image $\Phi(\overline{Z})$ is closed in $M^{\mu ss}$. Hence, $\phi(\hat Z) = \Phi(\overline{Z}) \cap U$
is closed in $U$, as claimed. 

\subsubsection*{Step 5: conclusion of proof:} Summarising the previous steps, we know that $\phi\colon (M^{\mu s}_{\mathrm{refl}})^{wn} \to U$ is a bijective open morphism. Hence, its (set-theoretical) inverse $\phi^{-1}$ is continuous. Since $U \subset M^{\mu s}$ is weakly normal by construction, it follows that $\phi^{-1}$ is regular, and hence that $\phi$ is an isomorphism, as claimed. 
\end{proof}

%As a corollary of the proof of Theorem~\ref{thm:compactificationofsimple} we obtain the following slightly finer result:
%\begin{cor}[The equivariant geometry of $S^{\mu ss}_{\mathrm{refl}}$]\label{cor:compactification}
% If we denote by $S^{\mu s}_{\mathrm{refl}}$ the open $\SL(V)$-invariant subvariety of $S$ that parametrises reflexive $\mu$-stable sheaves %and by $\partial S^{\mu s}_{\mathrm{refl}} = S \setminus S^{\mu s}_{\mathrm{refl}}$ its complement, the following holds:
%\begin{enumerate}
% \item[(1)] The geometric quotient of $S^{\mu s}_{\mathrm{refl}}$ by $\SL(V)$ exists. It is isomorphic to $(M^{\mu s}_{ %\mathrm{refl}})^{wn}$. 
% \item[(2)] The map $S^{\mu s}_{\mathrm{refl}}/\SL(V) \to M^{\mu ss}$ induced by the $\SL(V)$-invariant morphism $\Phi|_{S^{\mu %s}_{\mathrm{refl}}}$ is an open embedding.
% \item[(3)] The image of $\partial S^{\mu s}_{\mathrm{refl}}$ under $\Phi$ is equal to the complement of $S^{\mu s}_{\mathrm{refl}}/\SL(V)$ %in $M^{\mu ss}$; in other words, $S^{\mu s}_{\mathrm{refl}}$ is $\Phi$-saturated in $S$.
%\end{enumerate}
%\end{cor}
\begin{rem}\label{rem:answeringTelemansQuestion}
 Theorem~\ref{thm:compactificationofsimple} above together with Proposition~\ref{prop:a_cic_in_every_chamber} below solves an old problem (raised for example independently by Tyurin and Teleman \cite[Sect.~3.2, Conj.~1]{Tel08}) of exhibiting a sheaf-theoretically and geometrically meaningful compactification of the gauge-theoretic moduli space of vector bundles that are slope-stable with respect to a chosen K\"ahler class $[\omega]$ on a given K\"ahler manifold, in the particular case of projective manifolds and classes $[\omega] \in \mathrm{Amp}(X)_{\mathbb{R}}$.
\end{rem}

\subsection{Comparing $M^{\mu ss}$ with the Gieseker-Maruyama moduli space}\label{slopevsGM}
Up to this point, we have considered multipolarisations $(H_1, \ldots, H_{n-1})$ made up of possibly different ample line bundles $H_i$. In this section, we investigate the special case where \begin{equation}\label{eq:alleHsgleich}
 H_1 = H_2 = \ldots = H_{n-1} =: H.
\end{equation} In this setup, a compactification of the space of $H$-slope-stable vector bundles has long been known to exist, the so-called \emph{Gieseker-Maruyama moduli space}, see \cite[Chap.~4]{HL} and the references given there. The following result compares this Gieseker-Maruyama moduli space with our newly constructed moduli space $M^{\mu ss}$. 
\begin{prop}[Comparing $M^{Gss}$ and $M^{\mu ss}$]\label{prop:Giesekerslopecomparison}
 Let $X$ be a projective manifold of dimension $n$ with ample line bundle $H$, let $M^{\mu ss} = M^{\mu ss}(c, \Lambda)$ be the moduli space of $\mu_H$-semistable sheaves with Chern classes given by $c \in K(X)_{num}$ and fixed determinant line bundle $\Lambda$, and let $M^{Gss} = M^{Gss}(c, \Lambda)$ be the Gieseker-Maruyama moduli space for sheaves with the same invariants. Then, the following holds.
\begin{enumerate}
 \item There exist a line bundle $\lambda_{M^{Gss}}\bigl(u(c)\bigr)$ on $M^{Gss}$, unique up to isomorphism, such that for every flat family $\mathscr{E}$ of Gieseker-semistable sheaves parametrised by a scheme $Z$, with associated classifying morphism $\Psi_\mathscr{E}\colon Z \to M^{Gss}$, we have
\[\Psi_\mathscr{E}^*\bigl(\lambda_{M^{Gss}}\bigl(u(c)\bigr)\bigr) \cong \lambda_\mathscr{E} \bigl(u_{n-1}(c)\bigr). \] 
In the presence of group actions on $Z$ and $\mathscr{E}$, the previous isomorphism is an isomorphism of linearised line bundles.

 We will denote the pullback of the line bundle $\lambda_{M^{Gss}}\bigl(u(c)\bigr)$ to the weak normalisation $(M^{Gss})^{wn}$ by $\overline{\mathscr{L}}_{n-1}$.
 \item The line bundle $\overline{\mathscr{L}}_{n-1}$ is semiample. 
We set \[M^{Gss}_{n-1} \definiere  \mathrm{Proj}\Big(\bigoplus_{k\geq 0} H^0\bigl((M^{Gss})^{wn}, \overline{\mathscr{L}}_{n-1}^{\otimes k} \bigr)\Big),\] and we denote the natural morphism from $(M^{Gss})^{wn}$ to $M^{Gss}_{n-1}$ by $\Phi^{Gss}_{n-1}$. 
 \item If $N \in \mathbb{N}^+$ is as in the Main Theorem, and $S$ is as defined in \eqref{eq:Sdefined} above, the restriction of the universal family $\mathscr{F}$ from $S$ to to the open subset $S^{Gss}\definiere  \{s \in S \mid \mathscr{F}_s \text{ is Gieseker-semistable}\}$ induces a morphism $\overline \Phi\colon(M^{Gss})^{wn} \to M^{\mu ss}$ such that 
\begin{equation}\label{eq:barPhipullback}
 \overline{\Phi}^*(\mathscr{O}_{M^{\mu ss}}(1)) \cong \overline{\mathscr{L}}_{n-1}^{\otimes N}.
\end{equation}
 The Stein factorisation of $\overline{\Phi}$ is given by the following commutative diagram:
 \begin{equation}
  \begin{gathered}
   \begin{xymatrix}{
    (M^{Gss})^{wn} \ar[rd]^{\overline{\Phi}} \ar[d]_{\Phi^{Gss}_{n-1}} &  \\
    M^{Gss}_{n-1} \ar[r]^>>>>>>{\eta} & M^{\mu ss}.
}
   \end{xymatrix}
  \end{gathered}
 \end{equation}
\end{enumerate}
\end{prop}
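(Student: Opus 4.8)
The plan is to establish the three assertions in order, drawing throughout on the GIT-descent theory for determinant line bundles from \cite[Sect.~8.1]{HL}, on the semiampleness statement of Theorem~\ref{thm:semiampleness}, and on the injectivity of pullback along good quotients already exploited in Lemma~\ref{lem:pullbackinjective}. Throughout I write $R^{Gss}$ for the Gieseker-semistable locus in the relevant Quot-scheme, so that $M^{Gss}$ is the associated GIT quotient, and I denote by $\pi^{Gss}\colon S^{Gss}\to (M^{Gss})^{wn}$ the induced good quotient map obtained after passing to weak normalisations.

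For part (1), I would produce $\lambda_{M^{Gss}}\bigl(u(c)\bigr)$ by descending the determinant line bundle $\lambda\bigl(u_{n-1}(c)\bigr)$ from $R^{Gss}$ to $M^{Gss}$. The decisive point is that the class $u_{n-1}(c)$ is tailored so that the scalar subgroup $\C^{*}\subset \GL(V)$, which stabilises every GIT-semistable point and acts trivially on $R^{Gss}$, acts trivially on the fibres of $\lambda\bigl(u_{n-1}(c)\bigr)$ over closed orbits; this is precisely the numerical hypothesis of Kempf's descent lemma, verified exactly as in \cite[Thm.~8.1.5]{HL}. Uniqueness follows because $M^{Gss}$ is covered, after passing to suitable local covers carrying universal families, by images of classifying maps, so that the prescribed pullback behaviour pins the bundle down. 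Pulling back along $(M^{Gss})^{wn}\to M^{Gss}$ yields $\overline{\mathscr{L}}_{n-1}$, and by the very characterisation in (1) its pullback under $\pi^{Gss}$ is $\mathscr{L}_{n-1}|_{S^{Gss}}$.

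For part (2), semiampleness I would read off from Theorem~\ref{thm:semiampleness}: a power $\mathscr{L}_{n-1}^{\otimes\nu}$ is generated over $S$ by $\SL(V)$-invariant sections, and their restrictions still generate $\mathscr{L}_{n-1}^{\otimes\nu}$ over the invariant open subset $S^{Gss}$. As $\pi^{Gss}$ is a good quotient, invariant sections of $\mathscr{L}_{n-1}^{\otimes\nu}|_{S^{Gss}}$ coincide with sections of $\overline{\mathscr{L}}_{n-1}^{\otimes\nu}$ on $(M^{Gss})^{wn}$, by \cite[Thm.~8.1.5]{HL} and the equivariant projection formula, Lemma~\ref{lem:equivariantprojection}; surjectivity of $\pi^{Gss}$ then transports global generation downstairs, so that $\overline{\mathscr{L}}_{n-1}$ is semiample. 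The space $M^{Gss}_{n-1}$ and the morphism $\Phi^{Gss}_{n-1}$ are then simply the $\mathrm{Proj}$ of the (finitely generated) section ring of $\overline{\mathscr{L}}_{n-1}$ and its structural map.

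For part (3), the morphism $\overline{\Phi}$ is obtained by restricting the invariant morphism $\Phi$ of Definition~\ref{defi:modulispace} to $S^{Gss}$ and descending it through the categorical quotient $\pi^{Gss}$. Formula \eqref{eq:barPhipullback} then follows by comparing pullbacks to $S^{Gss}$: both $\pi^{Gss,*}\overline{\Phi}^{*}\mathscr{O}_{M^{\mu ss}}(1)$ and $\pi^{Gss,*}\overline{\mathscr{L}}_{n-1}^{\otimes N}$ equal $\mathscr{L}_{n-1}^{\otimes N}|_{S^{Gss}}$, and $\pi^{Gss,*}$ is injective by the argument of Lemma~\ref{lem:pullbackinjective} ($\SL(V)$ being semisimple, it has no nontrivial characters). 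To identify the Stein factorisation I would factor $\overline{\Phi}$ as $(M^{Gss})^{wn}\xrightarrow{f}Y\xrightarrow{g}M^{\mu ss}$ with $f_{*}\mathscr{O}=\mathscr{O}_{Y}$ and $g$ finite; since $g$ is finite and $\mathscr{O}_{M^{\mu ss}}(1)$ is ample, $g^{*}\mathscr{O}_{M^{\mu ss}}(1)$ is ample on $Y$, whence the projection formula gives $H^{0}\bigl(Y,(g^{*}\mathscr{O}(1))^{\otimes k}\bigr)\cong H^{0}\bigl((M^{Gss})^{wn},\overline{\mathscr{L}}_{n-1}^{\otimes Nk}\bigr)$. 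Thus $Y$ is the $\mathrm{Proj}$ of this Veronese ring, i.e.\ $Y\cong M^{Gss}_{n-1}$, $f=\Phi^{Gss}_{n-1}$, and $g=\eta$, giving the asserted diagram. The step demanding the most care is the descent in part (1): one must check the Kempf numerical condition for $u_{n-1}(c)$ so that $\lambda\bigl(u_{n-1}(c)\bigr)$ descends to $M^{Gss}$ itself and not merely after a twist, and one must carry the three weak normalisations consistently so that the section-ring identification underlying the Stein factorisation is legitimate.
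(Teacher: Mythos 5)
Your proposal is correct and follows the paper's proof in all essentials: part (1) is exactly what the paper obtains by observing that for $H_1=\ldots=H_{n-1}=H$ the class $u_{n-1}(c)$ takes the form $-rh^{n-1}+\chi(c\cdot h^{n-1})[\mathscr{O}_x]$ and citing \cite[Thm.~8.1.5]{HL} and \cite[Lem.~3.1, Thm.~2.5(2)]{LePotierDonaldsonUhlenbeck} (whose content is the Kempf-descent argument you sketch); part (2) is the same descent of the invariant generating sections of Theorem~\ref{thm:semiampleness} through the good quotient $\pi\colon S^{Gss}\to (M^{Gss})^{wn}$ via the equivariant projection formula, Lemma~\ref{lem:equivariantprojection}; and in part (3) the construction of $\overline{\Phi}$ and of \eqref{eq:barPhipullback} matches the paper (the paper derives the pullback formula from $\pi_*(\mathscr{O}_{S^{Gss}})^{\SL(V)}=\mathscr{O}_{(M^{Gss})^{wn}}$ and the projection formula, you from injectivity of $\pi^*$ à la Lemma~\ref{lem:pullbackinjective}; these are equivalent, both resting on the good-quotient property). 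The one genuinely different step is your identification of the Stein factorisation: the paper constructs mutually inverse morphisms, getting $f\colon M^{Gss}_{n-1}\to\widehat{M}^{\mu ss}$ from the $\mathrm{Proj}$ universal property and the inverse from the universal property of the Stein factorisation by showing that any connected projective curve in a fibre of $\overline{\Phi}$, on which $\overline{\mathscr{L}}_{n-1}^{\otimes N}$ is trivial by \eqref{eq:barPhipullback}, is contracted by $\Phi^{Gss}_{n-1}$; you instead identify the intermediate space $Y$ directly as $\mathrm{Proj}$ of the Veronese section ring, using that $g^*\mathscr{O}_{M^{\mu ss}}(1)$ is ample ($g$ being finite) and $f_*\mathscr{O}=\mathscr{O}_Y$. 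Your route is a little more streamlined and equally valid.

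Two minor imprecisions in your part (1) deserve flagging, though both are absorbed by your deferral to \cite[Thm.~8.1.5]{HL}. First, Kempf's criterion requires the \emph{full} stabiliser of every point with closed orbit to act trivially on the fibre, not merely the scalar subgroup $\C^*$: at a polystable point the stabiliser is a product of general linear groups, and triviality of its action is exactly where one uses that the Jordan--H\"older factors of a Gieseker-semistable sheaf share the reduced Hilbert polynomial, forcing $\chi\bigl(F_i\cdot u_{n-1}(c)\bigr)=0$ for each factor. Second, uniqueness of $\lambda_{M^{Gss}}\bigl(u(c)\bigr)$ cannot be argued via local universal families on $M^{Gss}$, since these fail to exist near strictly semistable points; the correct argument applies the defining pullback property to the classifying map of the family on $R^{Gss}$ itself and uses injectivity of pullback along the good quotient $R^{Gss}\to M^{Gss}$ (the same no-nontrivial-characters argument as in Lemma~\ref{lem:pullbackinjective}).
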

\begin{proof} If we set $h = [\mathscr{O}_H] \in K(X)$, owing to \eqref{eq:alleHsgleich} we have 
\[u_{n-1}(c) = -r h^{n-1} + \chi(c\cdot h^{n-1})[\mathscr{O}_x] \in K(X).\]
Hence, item (1) is a direct consequence of \cite[Lem.~3.1 and Thm.~2.5(2)]{LePotierDonaldsonUhlenbeck} or \cite[Thm.~8.1.5]{HL}.

Next, we prove the semiampleness claim of item (2). For this, we will use the setup and notation of Section~\ref{subsect:proof}. It follows from the construction of the Gieseker-Maruyama moduli space, as carried out for example in \cite[Sect.~5]{HL}, that, possibly after increasing the multiple of the ``twisting'' line bundle $\mathscr{O}_X(1)$, the following lemma holds. 
\begin{lemma}[Realising the Giesker-Maruyama moduli space as good quotient]
 The good quotient $\Pi\colon R^{Gss} \to R^{Gss}\hq \SL(V)$ of the $\SL(V)$-invariant open subscheme $R^{Gss}$ that parametrises Gieseker-semistable sheaves in $R^{\mu ss}$ exists. Moreover, the quotient $R^{Gss}\hq \SL(V)$ is isomorphic to the Gie\-se\-ker-Maruyama moduli space $M^{Gss}$. 
\end{lemma}
 As the weak normalisation of $R^{Gss}$ is isomorphic to $S^{Gss}= \{s \in S \mid \mathscr{F}_s \text{ is Gieseker-semistable}\}$, exploiting the fundamental property of the weak normalisation, see Proposition~\ref{propnot:weaknormalisation}, for the composition $S^{Gss} \to R^{Gss} \overset{\Pi}\longrightarrow M^{Gss}$ we obtain a natural $\SL(V)$-invariant morphism $\pi\colon S^{Gss} \to (M^{Gss})^{wn}$, which fits into the following commutative diagram:
\[\begin{xymatrix}
   {
S^{Gss} \ar[r]\ar[d]_{\pi} & R^{Gss} \ar[d]^{\Pi} \\
(M^{Gss})^{wn} \ar[r] & M^{Gss}.
}
  \end{xymatrix}
\]
Here, the horizontal maps are the respective weak normalisation morphisms. Since $\Pi$ is a good quotient, the map $\pi$ is a good quotient for the action of $\SL(V)$ on $S^{Gss}$. Moreover, it is the classifying morphism for the family of Gieseker-semistable sheaves $\mathscr{F}^{Gss} = \mathscr{F}|_{S^{Gss} \times X}$. Consequently, item (1) yields a $\SL(V)$-equivariant isomorphism 
\begin{equation}\label{eq:Giesekerpullbackiso}
 \mathscr{L}_{n-1}|_{S^{Gss}} \cong \pi^*\bigl(\overline{\mathscr{L}}_{n-1}\bigr).
\end{equation}
It follows from Theorem~\ref{thm:semiampleness} that there exists a natural number $n_0 \in \mathbb{N}^+$ such that $\mathscr{L}_{n-1}^{\otimes k_0}$ is generated by $\SL(V)$-invariant sections over $S$. Hence, in particular, $\mathscr{L}_{n-1}^{\otimes n_0}|_{S^{Gss}}$ is generated by $\SL(V)$-invariant sections over $S^{Gss}$. As $\pi$ is a good quotient, the equivariant isomorphism \eqref{eq:Giesekerpullbackiso} together with the equivariant projection formula implies that $\pi_* (\mathscr{L}_{n-1}^{\otimes n_0}|_{S^{Gss}})^{\SL(V)} \cong \overline{\mathscr{L}}_{n-1}^{\otimes n_0}$, and hence that $H^0\bigl(S^{Gss}, \mathscr{L}_{n-1}^{\otimes n_0}|_{S^{Gss}} \bigr)^{\SL(V)} \cong H^0\bigl(M^{Gss}, \overline{\mathscr{L}}_{n-1}^{\otimes n_0} \bigr)$.
Semiampleness of $\overline{\mathscr{L}}_{n-1}$ follows.

Finally, we prove the claims made in item (3). By the universal property of $M^{\mu ss}$ the $\SL(V)$-equivariant family $\mathscr{F}^{{Gss}}$ induces an $\SL(V)$-invariant morphism $S^{Gss} \to M^{\mu ss}$, which coincides with $\Phi|_{S^{Gss}}$. Since $(M^{Gss})^{wn} = S^{Gss}\hq \SL(V)$, this morphism admits a factorisation as 
\begin{equation}\label{eq:defofcircPhi}
 \Phi|_{Gss} = \overline{\Phi} \circ \pi,
\end{equation}
 where  $\overline{\Phi}\colon M^{Gss} \to M^{\mu ss}$ is the desired morphism. It follows from the universal properties of the triple $(M^{\mu ss}, \mathscr{O}_{M^{\mu ss}}(1), N)$ and from the equivariant isomorphism \eqref{eq:Giesekerpullbackiso} that there are isomorphisms $\Phi|_{S^{Gss}}^*\bigl(\mathscr{O}_{M^{\mu ss}}(1)\bigr) \cong \mathscr{L}_{n-1}^{\otimes N}|_{S^{Gss}} \cong \pi^*(\overline{\mathscr{L}}_{n-1}^{\otimes N})$ that are compatible with the respective $\SL(V)$-linearisations. As $\pi_*(\mathscr{O}_{S^{Gss}})^{\SL(V)} = \mathscr{O}_{(M^{Gss})^{wn}}$, equation \eqref{eq:defofcircPhi} and the equivariant projection formula consequently imply $\overline{\Phi}^*(\mathscr{O}_{M^{\mu ss}}(1)) \cong \overline{\mathscr{L}}_{n-1}^{\otimes N}$,
as claimed. 

For the discussion of the remainig claims made in item (3), let
\begin{equation}\label{eq:Steinfactorisation}
 M^{Gss}  \overset{\widehat{\Phi}}{\longrightarrow} \widehat{M}^{\mu ss}{\longrightarrow} M^{\mu ss}
\end{equation}
be the Stein factorisation of $\overline{\Phi}$. We have to show the existence of an isomorphism $f\colon M^{Gss}_{n-1} \to \widehat{M}^{\mu ss}$ that completes diagram \eqref{eq:Steinfactorisation} as follows
\begin{equation}\label{eq:completedSteinfactorisation}
\begin{gathered}
\begin{xymatrix}{
 &  M^{Gss} \ar[ld]_{\Phi^{Gss}_{n-1}} \ar[d]_{\widehat{\Phi}} \ar[rd]^{\overline{\Phi}} &    \\
M^{Gss}_{n-1} \ar[r]^>>>>>{f}_>>>>>{\cong} &  \widehat{M}^{\mu ss} \ar[r]& M^{\mu ss}.
}
\end{xymatrix}
\end{gathered}
\end{equation}
 We note that the map $\widehat \Phi$ is given by a subalgebra of $R(M^{Gss}, \overline{\mathscr{L}}_{n-1})$. Hence, the universal properties of the $\mathrm{Proj}$-construction yield a morphism $f\colon M^{Gss}_{n-1} \to \widehat{M}^{Gss}$ that makes diagram \eqref{eq:completedSteinfactorisation} commutative.

It remains to construct an inverse morphism $g\colon \widehat{M}^{Gss} \to M^{Gss}_{n-1}$. To achieve this task, by the universal property of the Stein factorisation, see \cite[Chap.~10, \S 6]{CAS}, it suffices to show that $\Phi^{Gss}_{n-1}$ is constant on the connected fibre components of $\overline{\Phi}$. So, let $C$ be a connected projective curve $C \subset M^{Gss}$ lying in a fibre of $\overline{\Phi}$. As a consequence of \eqref{eq:barPhipullback} the restriction $\overline{\mathscr{L}}_{n-1}^{\otimes N}|_C$ is trivial. Hence, it follows from the functorial properties of the Proj-construction that $C$ is contracted to a point by $\Phi^{Gss}_{n-1}$.
\end{proof}  
\begin{rem}
The map $\overline \Phi$ is birational when restricted to the respective closures of the weak normalisation $(M^{\mu s}_{\mathrm{refl}})^{wn}$ of the moduli space of $\mu$-stable reflexive sheaves, which embeds into both $(M^{Gss})^{wn}$ and $M^{\mu ss}$, cf.~Section~\ref{subsect:compactificationofsimpl}.
\end{rem}

\begin{rem} Note that in the surface case $n=2$ the starting point of Le Potier \cite{LePotierDonaldsonUhlenbeck} and Li \cite{JunLiDonaldsonUhlenbeck} is to study the line bundle $\lambda_{M^{Gss}}\bigl(u_1(c)\bigr)$ on $M^{Gss}$. Both authors show that this bundle is semiample. Le Potier \cite[Sect.~4]{LePotierDonaldsonUhlenbeck} then studies the maps given by complete linear systems of sections in high powers of $\overline{\mathscr{L}}_{n-1}$, whereas Li \cite[Sect.~3]{JunLiDonaldsonUhlenbeck} focusses on linear systems of sections that are lifted from curves. Later, Huybrechts and Lehn \cite[Chap.~5]{HL} introduce an approach that does not restrict to families of Gieseker-semistable sheaves, but more generally considers families of slope-semistable sheaves. It follows from the description of the resulting moduli spaces that the slightly different approaches of Le Potier, Li, and Huybrechts-Lehn induce the same equivalence relation on $M^{Gss}$, and are hence equivalent.
\end{rem}

\section{Wall-crossing problems} \label{sect:wall-crossing}
In the present section, we investigate wall-crossing questions for moduli spaces of sheaves on higher-dimensional base manifolds. The results obtained here, especially Theorem~\ref{thm:chambers} and Proposition~\ref{prop:a_cic_in_every_chamber}, are one of the main motivations for constructing the moduli space of $(H_1, \ldots, H_{n-1})$-semistable sheaves, as carried out in the previous sections of this paper.

\subsection{Motivation -- The work of Qin and Schmitt}\label{subsect:motivation}As sketched in the Introduction, there is a well-developed theory for wall-crossing phenomena of moduli spaces of sheaves on \emph{surfaces}. Investigating these phenomena for moduli spaces of Gieseker-semistable sheaves over \emph{higher-dimensional base manifolds}, Qin adapts his notion of ``wall'' from the two-dimensional to the higher-dimensional case. But in contrast to the surface case, he immediately finds examples of varieties (with Picard number three) where these walls are not locally finite inside the ample cone, see \cite[Ex.~I.2.3]{Qin93}. 

In order to avoid these pathologies, Schmitt \cite{Sch00} restricts his attention to segments inside the ample cone that connect integral ample classes. Provided that wall-crossing occurs on a rational wall, he is able to derive results that are similar in spirit those to those obtained for two-dimensional base manifolds by Matsuki and Wentworth \cite{MatsukiWentworth}. However, he also gives examples of three\-folds where this condition is not satisfied. More precisely, he exhibits threefolds $X$ with Picard number equal to two carrying a rank two vector bundles $E$ that are $\mu$-stable with respect to some integral ample divisor $H_0$ and unstable with respect to some other integral ample divisor $H_1$ such that the class $H_{\lambda}\definiere 
 (1-\lambda)H_0+\lambda H_1$ for which $E$ becomes strictly semistable is irrational, see \cite[Ex.~1.1.5]{Sch00}. This irrationality can be traced back to the fact that $\lambda\in \R$ is obtained as the solution of a quadratic equation given by a condition of the form $H^2_{\lambda}D=0$ for a suitable rational divisor $D$, cf.~Section~\ref{sect:Qin_and_Schmitt}.

In the subsequent sections, we will solve these problems based on the philosophy that the natural ``polarisations'' to consider when defining slope-semi\-sta\-bi\-lity on higher-dimensional base manifolds are not ample divisors but rather movable curves.

Recalling some notions already introduced in Section~\ref{subsect:semistability}, given an $n$-di\-men\-sional smooth projective variety $X$, let $N_1=N_1(X)_{\R}$ be the space of $1$-cycles with real coefficients modulo numerical equivalence, and let $N^1=N^1(X)_{\R}$ be the dual space of divisor classes, which contains the open cone $\mathrm{Amp}(X)$ of real ample divisor classes. Consider the associated subset $P(X)$ of $N_1$ consisting of $(n-1)$-st powers of real ample classes in $N^1$, which is contained inside the cone spanned by classes of movable curves.

We prove that $P(X)$ is open in $N_1$, and that the natural map $\mathrm{Amp}(X) \to P(X)$ (taking $(n-1)$-st powers) is an isomorphism, see Proposition~\ref{P}. More\-over, we show in Theorem~\ref{thm:chambers} that $P(X)$ supports a locally finite chamber structure given by linear rational walls such that the notion of slope-semistability is constant within each chamber. Furthermore, every chamber (even if it is not open) contains products  $H_1H_2...H_{n-1}$ of integral ample divisor classes, see 
Proposition~\ref{prop:a_cic_in_every_chamber}.

\subsection{Boundedness} \label{subsect:boundedness}
The current section is devoted to the proof of some preparatory boundedness statements. Since there is no added complication, in these preparatory statements we consider slope-semistability with respect to arbitrary K\"ahler classes. We say that a torsion-free sheaf is (semi)stable with respect to some K\"ahler class $\omega$ if it is (semi)stable with respect to $\omega^{n-1}$, cf.~Section~\ref{subsect:semistability}.

We start by adapting a preliminary boundedness result from \cite{Tel08} to our situation.

\begin{Lem}\label{compacity}
Let $X$ be a compact complex manifold of dimension $n$ endowed with a K\"ahler class $\phi$, and let $E$ be a torsion-free
sheaf on $X$. Then, for every $d\in \R$ the set $B\!N(E)_{\ge d}\definiere \{ L\in \mathrm{Pic}(X) \ | \ \mathrm{Hom}(L,E)\neq 0, \  \deg_{\phi}L\ge d\}$ is compact in $\mathrm{Pic} (X)$. 
\end{Lem}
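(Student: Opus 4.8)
The plan is to exploit the structure of $\mathrm{Pic}(X)$ as a complex Lie group. Since $X$ carries a Kähler class, Hodge theory shows that the image of $H^{1}(X,\mathbb{Z})$ in $H^{1}(X,\mathcal{O}_X)$ is a full lattice, so the identity component $\mathrm{Pic}^{0}(X)=H^{1}(X,\mathcal{O}_X)/H^{1}(X,\mathbb{Z})$ is a compact complex torus, while the group of connected components is identified via $L\mapsto c_1(L)$ with the Néron--Severi group $\mathrm{NS}(X)\subset H^{2}(X,\mathbb{Z})$, a finitely generated abelian group. A subset of $\mathrm{Pic}(X)$ is therefore compact if and only if it is closed and meets only finitely many components. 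I would thus split the argument into (i) showing that $B\!N(E)_{\ge d}$ is closed, and (ii) showing that the set of Chern classes $\{c_1(L): L\in B\!N(E)_{\ge d}\}\subset\mathrm{NS}(X)$ is finite.

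For closedness, observe first that $\deg_{\phi}L=\int_X c_1(L)\wedge\phi^{n-1}$ depends only on the component of $L$, so the condition $\deg_{\phi}L\ge d$ is open and closed. For the Brill--Noether condition I would pass to a local Poincaré bundle $\mathscr{P}$ over $U\times X$ with $U\subset\mathrm{Pic}(X)$ open: the sheaf $\mathscr{P}^{\vee}\otimes\mathrm{pr}_X^{*}E$ is flat over $U$, and Grauert's semicontinuity theorem shows that $u\mapsto h^{0}(X,L_u^{\vee}\otimes E)=\dim\mathrm{Hom}(L_u,E)$ is upper semicontinuous, whence $\{L:\mathrm{Hom}(L,E)\neq 0\}$ is closed and so is $B\!N(E)_{\ge d}$. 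For the degree bound needed in (ii), note that any nonzero $\psi\in\mathrm{Hom}(L,E)$ is injective, since its kernel is a torsion subsheaf of the line bundle $L$ while $E$ is torsion-free; thus $L$ is a rank-one subsheaf of $E$ and $\deg_{\phi}L=\mu_{\phi}(L)\le\mu_{\max,\phi}(E)=:D<\infty$. Together with the hypothesis this confines the degree to the bounded interval $d\le\deg_{\phi}L\le D$.

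The heart of the matter, and the step I expect to be the main obstacle, is to upgrade this bound on the single functional $\deg_{\phi}$ to genuine finiteness of $c_1(L)$ in the possibly higher-rank lattice $\mathrm{NS}(X)$. Here I would invoke the Hodge--Riemann bilinear relations for the Kähler class $\phi$: the symmetric form $Q(\alpha,\beta)=\int_X\alpha\wedge\beta\wedge\phi^{n-2}$ on $H^{1,1}(X,\mathbb{R})$ has Lorentzian signature, being positive on $\mathbb{R}\phi$ and negative definite on the $Q$-orthogonal complement $\phi^{\perp_Q}$. Writing $\xi=c_1(L)=t\phi+\xi_0$ with $\xi_0\in\phi^{\perp_Q}$, the coefficient $t$ is determined by $\deg_{\phi}L$ and hence bounded, while $-Q(\xi_0,\xi_0)=t^{2}\!\int_X\phi^{n}-Q(\xi,\xi)$ is a genuine norm on the negative-definite part; bounding $\xi_0$ therefore reduces to bounding the self-intersection number $Q(\xi,\xi)=\int_X c_1(L)^{2}\wedge\phi^{n-2}$ from below. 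This lower bound is the essential analytic input to be adapted from \cite{Tel08}: it encodes the boundedness of the family of rank-one subsheaves of the fixed sheaf $E$ of bounded slope (the Kähler counterpart of Grothendieck's boundedness lemma), and I would establish it by passing to the reflexive hull $E^{\vee\vee}$, saturating $L$ to a genuine sub-line-bundle with torsion-free quotient, and estimating the discriminant of the resulting rank-$(r-1)$ quotient. Granting this, $\xi=t\phi+\xi_0$ ranges over a bounded region of $H^{2}(X,\mathbb{R})$, which meets the discrete group $\mathrm{NS}(X)$ in finitely many points; hence $B\!N(E)_{\ge d}$ meets only finitely many components of $\mathrm{Pic}(X)$, and, being closed, is compact.
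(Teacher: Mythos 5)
Your overall architecture --- compactness in $\mathrm{Pic}(X)$ as closedness plus finiteness of the set of connected components, closedness via Grauert semicontinuity, and finiteness of the possible classes $c_1(L)$ via the Lorentzian signature of $Q(\alpha,\beta)=\int_X\alpha\wedge\beta\wedge\phi^{n-2}$ --- is sound, and the signature reduction is exactly the tool the paper itself deploys later (in the proof of Lemma~\ref{lem:boundedslope}). But there is a genuine gap at the step you yourself identify as the heart of the matter: the lower bound on $\int_X c_1(L)^2\wedge\phi^{n-2}$. Your sketch --- saturate $L$ in $E^{\vee\vee}$ and ``estimate the discriminant of the resulting rank-$(r-1)$ quotient'' --- does not go through as stated: the quotient $Q$ of $E^{\vee\vee}$ by the saturated line subsheaf is an arbitrary torsion-free sheaf, and there is no a priori lower bound for $\int_X\Delta(Q)\wedge\phi^{n-2}$ without semistability; Bogomolov's inequality is precisely what fails for unstable sheaves. (This is why in Lemma~\ref{lem:boundedslope} the paper first arranges, via Lemma~\ref{properlyss}, that every graded piece is semistable for a suitable intermediate class before applying Bogomolov.) Controlling $c_2(Q)\phi^{n-2}$ uniformly over all saturated line subsheaves of bounded degree is essentially the statement being proved, one rank down, so your sketch is at best the start of an induction on rank through Harder--Narasimhan-type filtrations that you do not carry out --- and the existence of such filtrations, like your unjustified appeal to $\mu_{\max,\phi}(E)<\infty$, is itself nontrivial on a merely compact K\"ahler manifold, where no projectivity (hence no Grothendieck lemma) is available. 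Two smaller omissions: you never return from the saturation $\tilde L$ to the original $L$ (this needs that effective classes of bounded $\phi$-degree form a bounded, hence finite, subset of $\mathrm{NS}(X)$), and the upper bound $\deg_\phi L\le D$ feeding into the boundedness of $t$ rests on the same unproven $\mu_{\max}$ claim.

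The paper avoids all of this analysis: by Raynaud's flattening theorem and Hironaka's elimination of points of indeterminacy there is a composition of blow-ups with smooth centres $f\colon X'\to X$ such that $E'\definiere f^*E/\mathscr{T}\negthinspace or(f^*E)$ is locally free; one perturbs $f^*\phi$ by small multiples of the exceptional divisor classes to obtain a K\"ahler class $\phi'$ on $X'$, observes that $L\mapsto f^*L$ is injective and carries $B\!N(E)_{\ge d}$ into the corresponding Brill--Noether set for $E'$, and then invokes Teleman's compactness result \cite[Prop.~2.5]{Tel08} for the \emph{locally free} sheaf $E'$ as a black box, so that no new estimate for torsion-free sheaves is ever needed; closedness is handled exactly as you propose. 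To repair your proposal you should either genuinely carry out the rank induction in the K\"ahler category (which amounts to reproving the cited result of Teleman), or, far more economically, replace the discriminant sketch by this blow-up reduction to the locally free case.
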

\begin{proof}
By the flattening result of Raynaud  \cite{Ray72}  and Hironaka's theorem on elimination of points of indeterminacy \cite{Hir64}, there exists a composition of blow-ups with smooth centres $f\colon X'\to X$ such that $E'\definiere f^*E/\mathscr{T}\negthinspace or(f^*E)$ is locally free. Denote by $E_1$,..., $E_k$ the irreducible components of the exceptional divisor of $f$ in $X'$.
Then, $\phi'\definiere f^*\phi-a_1 [E_1]-a_2 [E_2]-...-a_k[E_k] \in N^1(X')_\mathbb{R}$ is a K\"ahler class on $X'$ for suitably chosen (small) positive real numbers $a_i$, and we shall compute degrees on $X'$ with respect to this class. 
Now, if $L$ belongs to $B\!N(E)_{\ge d}$ then $f^*L\in B\!N(E')_{\ge d}$.

To rephrase, we have just shown that the image of $B\!N(E)_{\ge d}$ under the injective holomorphic map $f^*\colon \mathrm{Pic}(X) \to \mathrm{Pic}(X')$ is contained in $B\!N(E')_{\ge d}$. This latter set is compact by \cite[Prop.~2.5]{Tel08}, and hence bounded. Consequently $B\!N(E)_{\ge d}$ is likewise bounded. Additionally, using Grauert's Semicontinuity Theorem one sees that $B\!N(E)_{\ge d}$ is closed in $\mathrm{Pic}(X)$. As $X$ is K\"ahler, it follows that $B\!N(E)_{\ge d}$ is compact, as claimed.\end{proof}

\begin{Lem}\label{properlyss}
Let $X$ be a compact complex manifold of dimension $n$ and let $\phi_0, \phi_1 \in H^{1,1}(X, \mathbb{R})$ be two K\"ahler classes on $X$. For every $\tau\in [0,1]$ we set 
$$\phi_{\tau}\definiere  (1-\tau)\phi_0+\tau \phi_1 \in H^{1,1}(X, \mathbb{R}).$$ Suppose that the torsion-free sheaf $E$ on $X$ is semistable with respect to $\phi_1$ and unstable  with respect to $\phi_0$, and let
$$t\definiere \inf\{\tau >0 \mid E {\rm  \ is \ semistable \ with \ respect \ to} \ \phi_{\tau}\}.$$ Then, $E$ is properly semistable with respect to $\phi_t$.
\end{Lem}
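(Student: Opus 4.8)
The plan is to prove the two halves of ``properly semistable'' separately: first that $E$ is $\phi_t$-semistable, and then that $E$ fails to be $\phi_t$-stable. For a proper non-trivial saturated subsheaf $F\subseteq E$ of rank $\rho$, set $\delta_F(\tau)\definiere \bigl(r\,c_1(F)-\rho\,c_1(E)\bigr)\cdot\phi_\tau^{n-1}$, so that $E$ is $\phi_\tau$-semistable precisely when $\delta_F(\tau)\le 0$ for every such $F$. Expanding $\phi_\tau^{n-1}=\bigl((1-\tau)\phi_0+\tau\phi_1\bigr)^{n-1}$ shows that each $\delta_F$ is a polynomial in $\tau$ of degree at most $n-1$, hence continuous. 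First I would observe that the semistable locus $\Sigma\definiere\{\tau\in[0,1]\mid E \text{ is }\phi_\tau\text{-semistable}\}=\bigcap_F\{\tau\mid\delta_F(\tau)\le 0\}$ is an intersection of closed sets and therefore closed, with no appeal to boundedness. Since $E$ is $\phi_1$-semistable we have $1\in\Sigma$, while $\phi_0$-instability gives $0\notin\Sigma$; as $\Sigma$ is closed, $0$ has a neighbourhood disjoint from $\Sigma$, so $t>0$, and an infimizing sequence $\tau_n\downarrow t$ in $\Sigma$ yields $t\in\Sigma$ by closedness. Thus $E$ is $\phi_t$-semistable, and moreover $(0,t)\cap\Sigma=\emptyset$, i.e.\ $E$ is $\phi_\tau$-\emph{unstable} for every $\tau\in(0,t)$.

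For the second half I would exploit this instability below $t$. For each $\tau\in(0,t)$ let $F_\tau\subseteq E$ be the maximal destabilizing subsheaf with respect to $\phi_\tau$ (which exists since maximal slopes are finite in this Kähler setting); it is saturated and satisfies $\delta_{F_\tau}(\tau)>0$. The key point, and the \textbf{main obstacle}, is a boundedness statement: the subsheaves $\{F_\tau\}_{\tau\in(0,t)}$ realise only \emph{finitely many} numerical types $(\rho,\gamma)$ with $\gamma=c_1(F_\tau)\in\mathrm{NS}(X)$. Here the slope inequalities only constrain the direction of $\gamma$, not its magnitude; the magnitude is controlled by Lemma~\ref{compacity}: the inclusion $F_\tau\hookrightarrow E$ induces a non-zero map $\det F_\tau\to\bigl(\textstyle\bigwedge^{\rho}E\bigr)^{\vee\vee}$, and since $\deg_{\phi_\tau}\det F_\tau=\rho\,\mu_{\phi_\tau}(F_\tau)>\rho\,\mu_{\phi_\tau}(E)$ is uniformly bounded below (using $\min_{\sigma\in[0,1]}\mu_{\phi_\sigma}(E)>-\infty$), a version of Lemma~\ref{compacity} uniform over the compact family of Kähler classes $\{\phi_\sigma\}_{\sigma\in[0,1]}$ places all $\det F_\tau$ in a bounded subset of $\mathrm{Pic}(X)$; discreteness of $\mathrm{NS}(X)$ then gives finiteness of the classes $\gamma$. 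I expect verifying this uniformity to be the technical heart of the argument, and it is exactly what the general boundedness result (Proposition~\ref{boundedness}, in the spirit of \cite{Tel08}) is designed to supply.

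Granting finiteness, I would finish by a pigeonhole-and-continuity argument. Since $(0,t)$ is covered by the finitely many type-strata and $\sup(0,t)=t$, some type $(\rho_0,\gamma_0)$ occurs along a sequence $\tau_n\uparrow t$; because $\delta$ depends only on the numerical type, $\delta_{(\rho_0,\gamma_0)}(\tau_n)=\delta_{F_{\tau_n}}(\tau_n)>0$ for all $n$, whence $\delta_{(\rho_0,\gamma_0)}(t)\ge 0$ by continuity. Choosing one actual subsheaf $F\definiere F_{\tau_1}\subseteq E$ of this type gives $\mu_{\phi_t}(F)\ge\mu_{\phi_t}(E)$, while the $\phi_t$-semistability established above forces $\mu_{\phi_t}(F)\le\mu_{\phi_t}(E)$. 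Hence $\mu_{\phi_t}(F)=\mu_{\phi_t}(E)$ for the proper non-trivial subsheaf $F$, so $E$ is not $\phi_t$-stable. Together with $\phi_t$-semistability this shows that $E$ is properly semistable with respect to $\phi_t$, as claimed.
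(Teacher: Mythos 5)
Your first half is correct and matches the paper's (much terser) continuity argument, and your endgame is both sound and genuinely simpler than the paper's: once one knows that the maximal destabilisers $F_\tau$, $\tau\in(0,t)$, realise only finitely many numerical types $(\rho,\gamma)$, pigeonholing one type along $\tau_n\uparrow t$, passing to the limit in the polynomial $\delta_{(\rho_0,\gamma_0)}$, and using any actual $F_{\tau_1}$ of that type as a witness avoids the paper's detour, in which one extracts a limit line bundle $L\subset\bigl(\bigwedge^k E\bigr)^{\vee\vee}$ via compactness in $\mathrm{Pic}(X)$ and must then reconstruct an honest subsheaf $F\subset E$ with $\bigl(\bigwedge^k F\bigr)^{\vee\vee}=L$ by the cone-over-the-Grassmannian argument of \cite[Sect.~2.2]{Tel08}.

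The genuine gap is exactly where you flag it, and the patch you propose does not work. First, invoking Proposition~\ref{boundedness} is circular within the paper's architecture: its proof (via Lemma~\ref{lem:boundedslope}) explicitly uses Lemma~\ref{properlyss} to produce the extensions $0\to E_1\to E_0\to E_2\to 0$, so it is logically downstream of the very statement you are proving and cannot be used here. Second, the ``version of Lemma~\ref{compacity} uniform over the compact family $\{\phi_\sigma\}_{\sigma\in[0,1]}$'' is not a routine strengthening of the pointwise statement: each $L_\tau\definiere\det F_\tau$ carries a degree bound only at its \emph{own} class $\phi_\tau$, and such a bound does not transfer to any fixed class, because $\deg_{\phi_t}L_\tau-\deg_{\phi_\tau}L_\tau=c_1(L_\tau)\cdot\bigl(\phi_t^{n-1}-\phi_\tau^{n-1}\bigr)$, where the second factor tends to zero as $\tau\to t$ while the first factor is precisely the quantity whose boundedness is at stake; consequently the union over $\tau$ of the individually compact sets $B\!N(\,\cdot\,)_{\ge d}$ taken with respect to $\phi_\tau$ need not be bounded, and no soft compactness or diagonal argument closes this. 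Filling the step requires genuine new input --- in the paper's later treatment this role is played by the Hodge Index Theorem combined with the Bogomolov inequality (the discriminant estimates in the proof of Lemma~\ref{lem:boundedslope}), while in the proof of the present lemma the paper instead pigeonholes only the rank $k$, passes to the line bundles $\bigl(\bigwedge^k F_n\bigr)^{\vee\vee}$, and applies Lemma~\ref{compacity} at the single class $\phi_t$ before performing the Grassmannian-cone descent. As it stands, the self-declared ``technical heart'' of your argument is unproven, and the only reference you offer for it is unavailable at this point of the paper.
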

\begin{proof}
The continuity of $\tau\mapsto \deg_{\phi_\tau}(E)$ (\cite[Cor.~2.4]{Miyaoka}) implies 
 that $E$ is $\phi_t$-semistable. 
We shall  show that it is not $\phi_t$-stable. By the definition of $t$ there exists an integer $k$ with $0<k<\mathrm{rk}\, E$ and an increasing sequence $(\tau_n)_{n\in \N}$ of real numbers converging to $t$ together with a corresponding sequence $(F_n)$ of $\phi_{\tau_n}$-destabilising rank $k$ subsheaves of $E$. Then, for each $n \in \mathbb{N}$, the reflexive exterior power $(\bigwedge^k F_n)^{\vee \vee}$ is an invertible subsheaf of  $(\bigwedge^k E)^{\vee \vee}$ that is destabilising with respect to $\phi_{\tau_n}$. 
By Lemma \ref{compacity}, applied to $\phi=\phi_t$ and $d= \mu_{\phi_t}(\bigwedge^k F_n)^{\vee \vee}-1$, and by our assumption on $t$ we obtain an invertible subsheaf $L$ of $(\bigwedge^k E)^{\vee \vee}$ whose slope with respect to $\phi_t$ equals the slope of $(\bigwedge^k E)^{\vee \vee}$ by continuity of the degree. Without loss of generality, we may assume that $L$ is a saturated subsheaf of $(\bigwedge^k E)^{\vee \vee}$.

We will show that $L=(\bigwedge^k F)^{\vee \vee}$ for some subsheaf $F$ of $E$, which will have the same slope with respect to $\phi_t$ as $E$, which therefore is properly $\phi_t$-semistable. 
For this we follow some ideas contained in \cite[Sect.~2.2]{Tel08}. Without loss of generality, we may assume that $E$ is reflexive, and hence that there exists a Zariski-open subset $U$ of $X$ with $\mathrm{codim}_X(X\setminus U) \geq 2$ such that $E|_U$ is locally free. Moreover, since $L$ is saturated in $(\bigwedge^k E)^{\vee \vee}$, reflexivity of $E$ implies there exists an open subset $U' \subset U$ with $\mathrm{codim}_X(X\setminus U') \geq 2$ such that $L|_{U'}$ is a line subbundle of $(\bigwedge^k E)^{\vee \vee}|_{U'}$. Let $C_k(E|_{U'})\subset \bigwedge^k E_{U'}$ be the cone subbundle over the relative Grassmannian $G_s(E|_{U'})$. Then, over $U'$ the line bundle $L$ is contained in $C_k(E)$ and thus gives rise to a subbundle $F'$ of $E|_{U'}$ via projection to the relative Grassmannian, cf.~\cite[first paragr.~of Sect.~2.2]{Tel08}. If $F$ is the unique extension of $F'$ as a reflexive subsheaf of $E$, then we have $L = \bigwedge^k(F)^{\vee \vee}$, as desired. Moreover, by continuity we have $\mu_{\phi_t}(F)\ge \mu_{\phi_t}(E)$, which was to be shown.
\end{proof}

The next proposition proves boundedness of the set of torsion-free sheaves that are slope-semistable with respect to K\"ahler classes. Although we do not need the result in this generality, the techniques involved in the proof will be needed in the special case of polarisations from $P(X)$. We note that Proposition~\ref{prop:slopeboundedness} only applies to multipolarisations, and hence does not cover the case of polarisations from $P(X)$ needed here.
\begin{Prop}[Boundedness]\label{boundedness}
Let $X$ be a projective manifold of dimension $n$, and let $K$ a compact subset of the K\"ahler cone $\mathcal{K}(X) \subset H^{1,1}(X, \mathbb{R})$ of $X$. Fix a natural number $r>0$ and classes $c_i\in H^{2i}(X,\R)$. Then, the family
of rank $r$ torsion-free sheaves $E$ with $c_i(E)=c_i$ that are semistable with respect to some polarisation contained in $K$ is bounded.
\end{Prop}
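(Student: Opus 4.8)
The plan is to reduce the statement to the boundedness of sheaves that are semistable with respect to a \emph{single} Kähler class and then to produce the required uniform bound by a compactness argument fed into the Hodge index theorem. \textbf{Reduction.} Exactly as in the single‑polarisation situation behind Proposition~\ref{prop:slopeboundedness} (cf.~\cite{Langer}), a family of torsion‑free sheaves with the fixed Chern classes $c_i$ is bounded as soon as the slopes of their maximal destabilising subsheaves, measured against one fixed Kähler class, are uniformly bounded above; since all $c_i$ are fixed this one‑sided bound is all that is needed. Hence it suffices to produce a fixed Kähler class $\psi$ and a constant $C$ with $\mu_{\psi,\max}(E)\le C$ for every $E$ in the family. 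I argue by contradiction: if no such bound exists there is a sequence $E_j$ in the family, each semistable with respect to some $\phi_j\in K$, with $\mu_{\psi,\max}(E_j)\to\infty$ for every fixed $\psi$; by compactness of $K$ I pass to a subsequence with $\phi_j\to\phi_\infty\in\mathcal{K}(X)$ and take $\psi=\phi_\infty$.

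\textbf{Extracting a destabilising subsheaf.} Let $F_j\subset E_j$ be the maximal $\phi_\infty$‑destabilising subsheaf; it is saturated and $\phi_\infty$‑semistable, and after a further subsequence its rank is a fixed $k$ with $1\le k\le r-1$. By construction $\mu_{\phi_\infty}(F_j)=\mu_{\phi_\infty,\max}(E_j)\to\infty$. On the other hand, $\phi_j$‑semistability of $E_j$ gives $\mu_{\phi_j}(F_j)\le\mu_{\phi_j}(E_j)$, and the right‑hand side stays bounded because $c_1(E_j)=c_1$ is fixed and $\phi_j$ ranges in the compact set $K$. The contradiction will come from comparing $\mu_{\phi_\infty}(F_j)$ with $\mu_{\phi_j}(F_j)$ for $j$ large, using that $\phi_j$ is close to $\phi_\infty$.

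\textbf{Hodge‑theoretic heart.} Write $\xi_j\definiere c_1(F_j)$ and use the symmetric form $Q(\alpha,\beta)\definiere\alpha\cdot\beta\cdot\phi_\infty^{n-2}$ on $N^1(X)_\R$, which by the Hodge--Riemann relations for the polarisation $\phi_\infty$ has signature $(1,\rho-1)$ with $\phi_\infty$ spanning its positive direction. Decomposing $\xi_j=t_j\phi_\infty+\eta_j$ with $Q(\eta_j,\phi_\infty)=0$, the growth $\deg_{\phi_\infty}F_j=Q(\xi_j,\phi_\infty)\to\infty$ forces $t_j\to\infty$. The additivity of the discriminant over the $\phi_\infty$‑Harder--Narasimhan filtration of $E_j$, combined with the Bogomolov inequality $\Delta(\,\cdot\,)\cdot\phi_\infty^{n-2}\ge0$ for its $\phi_\infty$‑semistable factors (Langer's theorem, valid for Kähler classes), bounds $Q(\xi_j,\xi_j)$ from below in terms of the fixed quantity $\Delta(E_j)\cdot\phi_\infty^{n-2}$; since $Q(\eta_j,\eta_j)\le0$ this yields $|Q(\eta_j,\eta_j)|\lesssim t_j^{2}$, i.e.\ the $\phi_\infty$‑primitive part of $\xi_j$ grows no faster than its $\phi_\infty$‑degree. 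Inserting this into
\[\mu_{\phi_j}(F_j)=\mu_{\phi_\infty}(F_j)+\tfrac1k\,\xi_j\cdot\bigl(\phi_j^{\,n-1}-\phi_\infty^{\,n-1}\bigr)\]
and estimating the correction term by the Hodge‑index Cauchy--Schwarz inequality for $Q$, the component of $\xi_j$ along $\phi_\infty$ contributes $t_j\,\phi_\infty\cdot(\phi_j^{\,n-1}-\phi_\infty^{\,n-1})=o(t_j)$ while the primitive component contributes at most $\sqrt{|Q(\eta_j,\eta_j)|}\cdot o(1)=o(t_j)$, because $\phi_j^{\,n-1}-\phi_\infty^{\,n-1}\to0$. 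Hence $\mu_{\phi_j}(F_j)=\mu_{\phi_\infty}(F_j)\,(1+o(1))\to\infty$, contradicting $\mu_{\phi_j}(F_j)\le\mu_{\phi_j}(E_j)$.

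\textbf{Main obstacle.} I expect the crux to be precisely this higher‑dimensional Hodge‑index comparison: the slope functionals involve the top powers $\phi^{n-1}$, whereas the Bogomolov/discriminant estimate only controls the middle self‑intersection $\xi_j^{2}\cdot\phi_\infty^{n-2}$, so one must reconcile the two degrees and make all $o(1)$‑estimates uniform as $\phi_j\to\phi_\infty$ inside the compact set $K$ (which enters both to extract $\phi_\infty$ and to keep $\mu_{\phi_j}(E_j)$ and the discriminant pairings bounded). For $n=2$ this is the classical Hodge index theorem and the argument is immediate; for $n\ge3$ I would either invoke the Hodge--Riemann bilinear relations as above or restrict to a general complete‑intersection surface. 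Rather than the reduction or the choice of $F_j$, this step should absorb the bulk of the work, and Lemma~\ref{compacity} may be read as furnishing exactly this Hodge‑theoretic input in packaged form, applied to the destabilising line bundle $\det F_j\hookrightarrow(\bigwedge^{k}E_j)^{\vee\vee}$.
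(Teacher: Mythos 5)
Your Hodge-theoretic core---decomposing $c_1$ of a destabilising subsheaf into a $\phi_\infty$-component and a $\phi_\infty$-primitive part, bounding the primitive part via additivity of the discriminant and the Bogomolov inequality for K\"ahler polarisations, and controlling the slope perturbation by Hodge-index Cauchy--Schwarz---is sound, and it is essentially the mechanism of the paper's Lemma~\ref{lem:boundedslope} (note that for K\"ahler classes the Bogomolov input is \cite{BandoSiu} and \cite{BiswasMcKay}; Langer's theorem is for ample ones). The genuine gap is in your reduction. The boundedness criterion you invoke (\cite[Thm.~3.3.7]{HL}, the one behind Proposition~\ref{prop:slopeboundedness}) bounds a family once $\mu_{\max}$ is bounded above with respect to a fixed \emph{ample} class; it does not apply to an arbitrary K\"ahler class. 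Your limit $\phi_\infty \in K \subset \mathcal{K}(X)$ is in general a transcendental K\"ahler class, so a bound on $\mu_{\phi_\infty,\max}$ feeds into no known criterion---extending boundedness statements to such polarisations is part of what the proposition is asserting. Symmetrically, your contradiction hypothesis is unjustified: failure of boundedness only yields, via the criterion, a sequence with $\mu_{\phi_0,\max}(E_j)\to\infty$ for one fixed \emph{ample} $\phi_0$, not a single sequence with $\mu_{\psi,\max}(E_j)\to\infty$ ``for every fixed $\psi$''; a bound on $c_1(F)\cdot\psi^{n-1}$ merely confines $c_1(F)$ to a half-space, within which $c_1(F)\cdot\phi_0^{n-1}$ remains uncontrolled, so a slope bound for one class does not transfer to another.

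Once this is made honest, your perturbation estimate---which crucially needs $\phi_j^{\,n-1}-\psi^{\,n-1}\to 0$---can only compare $\phi_j$ with its limit $\phi_\infty$, whereas what is actually required is a comparison between $\phi_j \in K$ and an ample $\phi_0$ that may be far away. This global step is precisely what the paper's proof supplies: it walks along the segment $\phi_\tau=(1-\tau)\phi_0+\tau\phi_1$, splits the sheaf at each wall $t_i$ where semistability fails (Lemma~\ref{properlyss}; the existence of the limiting destabilising subsheaf there rests on Lemma~\ref{compacity}, which is a compactness statement in $\mathrm{Pic}(X)$ obtained from Raynaud flattening and semicontinuity, not the packaged Hodge-index input you suggest), observes that the wall class $a_i$ is $\phi_{t_i}$-\emph{primitive}, and bounds it by Hodge index plus Bogomolov, uniformly because $\phi_{t_i}$ stays in the compact set of segments joining $\phi_0$ to $K$. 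After at most $r-1$ splittings one obtains a filtration with $\phi_0$-semistable graded pieces of bounded $\phi_0$-slope, whence $\mu_{\phi_0,\max}(E)\le C$, and only then is \cite[Thm.~3.3.7]{HL} applied with the ample $\phi_0$. Your local argument at $\phi_\infty$ could be salvaged by replacing the limit-point comparison with this segment-by-segment wall-crossing iteration; as written, the reduction step fails.
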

\begin{proof} 
Let $\phi_1$ be some element of $K$. Choose an ample class $\phi_0$ in $H^{1,1}(X, \mathbb{R})$. For $\tau\in [0,1]$, set $\phi_{\tau}\definiere (1-\tau)\phi_0+\tau\phi_1$, and denote by 
$\mu_{\tau} $ the slope with respect to $\phi_{\tau}$. We shall prove boundedness by applying \cite[Thm.~3.3.7]{HL} with respect to the ample polarisation $\phi_0$. In order to establish the assertion of Proposition~\ref{boundedness}, it thus suffices to establish the following. 

\begin{Lem}\label{lem:boundedslope} In the setup of Proposition~\ref{boundedness}, if $E_{\mathrm{max}}$ is the maximally $\phi_0$-de\-stabi\-lising subsheaf of a $\phi_1$-semistable torsion-free sheaf
$E$ of rank $r$, then $\mu_0(E_{\mathrm{max}})$ is bounded by a constant depending only on $c_1(E)$, $c_2(E)$, $\phi_0$, and $K$.
\end{Lem}

\begin{proof}
The idea of the proof is to produce a filtration of $E$ such that the associated graduation has $\phi_0$-semistable terms whose slope with respect to $\phi_0$ is bounded by a constant depending only on $c_1(E)$, $c_2(E)$, $\phi_0$, and $K$. This constant will then bound $\mu_0(E_{\mathrm{max}})$ as well.
 
To implement this idea, let  $E_0=E$ be $\phi_1$-semistable and set $$t_1\definiere \inf\{\tau >0 \ | \ E_0 {\rm  \ is \ semistable \ with \ respect \ to} \ \phi_{\tau}\}.$$ If $t_1=0$, then $E$ is $\phi_0$-semistable, 
and our claim is verified. In the following argumentation we will therefore assume that $t_1 > 0$. Under this assumption, we know from Lemma~\ref{properlyss} that $E_0$ can be written as an extension
\begin{equation}\label{eq:ses}
 0 \to E_1 \to E_0\to E_2 \to 0
\end{equation}
with a torsion-free subsheaf $E_1$ and torsion-free quotient
$E_2$, such that $E_1$ and $E_2$ are both $\phi_{t_1}$-semistable with slopes 
\begin{equation}\label{eq:slopeequality}
 \mu_{t_1}(E_1)=\mu_{t_1}(E_2)=\mu_{t_1}(E_0).
\end{equation}
We define 
\begin{align}\label{eq:a1def}
 a_1 &\definiere \frac{\rk(E_2)c_1(E_1)-\rk (E_1)c_1(E_2)}{\rk (E_0)},
\end{align}
and we note that \eqref{eq:ses} taken together with the additivity of Chern classes in short exact sequences leads to
\begin{equation}\label{eq:newa1expression}
 a_1 = \frac{\rk(E_2)c_1(E_0)}{\rk (E_0)}-c_1(E_2) =c_1(E_1)
-\frac{\rk(E_1)c_1(E_0)}{\rk (E_0)}.
\end{equation}
We first establish a lower bound for the intersection of $- a_1^2$ with $\phi_{t_1}^{n-2}$. As a consequence of \eqref{eq:slopeequality} and \eqref{eq:newa1expression}  we obtain
\begin{equation}\label{eq:aisinkernel}
 a_1\phi_{t_1}^{n-1}=0,
\end{equation}
 i.e., $a_1$ is $\phi_{t_1}$-primitive. The Hodge Index Theorem %\cite[Thm.~6.32]{VoisinI} 
 implies that $a \mapsto -a ^2\phi_{t_1}^{n-2}$ defines the square of a norm on $\mathrm{Ker}(\phi_{t_1}^{n-1})\subset NS(X)_{\mathbb{R}}$. In particular, from \eqref{eq:aisinkernel} we infer that 
\begin{equation}\label{eq:lowerbound}
 0 \leq-a_1 ^2\phi_{t_1}^{n-2}.
\end{equation}
Moreover, equality in \eqref{eq:lowerbound} is achieved if and only if $a_1=0$.

With these preparations in place, we will show that $-a_1^2\phi_{t_1}^{n-2}$ is bounded from above by some constant that only depends on $c_1(E)$, $c_2(E)$, $\phi_1$ and $K$.  Once this bound is established, we will conclude that $a_1$ is contained in a finite set that depends only on $c_1(E)$, $c_2(E)$, $\phi_0$, and $K$. The equality \eqref{eq:newa1expression} will then give the desired bound on $c_1(E_1)$, and consequently also on $c_1(E_2)$ (note that $\phi_{t_1}$ belongs to the convex hull of $K$ and $\phi_0$). 

Before we proceed, recall the definition of the discriminant of a torsion-free sheaf $F$, cf.~\cite[Sect.~3.4]{HL}: 
\begin{equation} \label{eq:discrdef}
 \Delta (F)=\frac{1}{\rk (F)}\bigl(c_2(F)-\frac{\rk(F)-1}{2\rk (F)}c_1^2(F)\bigr).
\end{equation}

A short computation using \eqref{eq:discrdef} and \eqref{eq:newa1expression} shows that we can express the discriminant of $E_0$ in terms of $a_1$ and in terms of the discriminants of $E_1$ and $E_2$, as follows:
\begin{equation}\label{eq:alternativediscriminant}
 \Delta (E_0)=-\frac{1}{2\rk( E_1) \rk(E_2)}\,a_1^2+\frac{\rk (E_1)}{\rk (E_0)}\,\Delta(E_1) + \frac{\rk (E_2)}{\rk (E_0)}\,\Delta (E_2).
\end{equation}
Since both $E_1$ and $E_2$ are $\phi_{t_1}$-semistable, the Bogomolov inequality (see \cite[Cor.~3]{BandoSiu} for the case of polystable reflexive sheaves and \cite[Lem.~2.1]{BiswasMcKay} for the general case) holds for both sheaves; i.e., we have
\begin{equation}\label{eq:Bogomolov}
 \Delta(E_i)\phi_{t_1}^{n-2}\ge 0 \quad \quad \text{for }i=1,2.
\end{equation} 
Combining the lower bound \eqref{eq:lowerbound} with the expression \eqref{eq:alternativediscriminant} and the Bogomolov inequalities \eqref{eq:Bogomolov} we infer that $$0 \leq -\frac{1}{2\,\rk( E_1)\, \rk(E_2)}\,a_1^2\phi_{t_1}^{n-2}\le \Delta (E_0)\phi_{t_1}^{n-2},$$
which establishes the desired bound for $a_1$, since $\phi_{t_1}$ lies in the compact set $\{(1-\tau) \phi_0 + \tau \phi_1 \mid \tau \in [0,1], \phi_1 \in K \}$.

We now iterate this argument. For this, we set $$t_2\definiere \inf\{\tau >0 \ | \ E_1, \ E_2  {\rm  \ are \ semistable \ with \ respect \ to} \ \phi_{\tau}\}.$$
 If $t_2=0$ we are done as before, for $0\subset E_1\subset E$ is the desired filtration.
 
When  $t_2 \neq 0$ 
 one of  $E_1$, $E_2$ will be properly $ \phi_{t_2}$-semistable and the other $ \phi_{t_2}$-semistable.
 For simplicity of notation
  suppose that $E_2$ is properly $ \phi_{t_2}$-semistable and denote by $E_3$ a subsheaf of $E_2$ with
 torsion-free quotient $E_4$ such that 
 $E_3$ and $E_4$ are $\phi_{t_2}$-semistable and $\mu_{t_2}(E_3)=\mu_{t_2}(E_4)=\mu_{t_2}(E_2)$.

 We will use the following shorthand notation: $r_i  \definiere \rk (E_i)$, $\frak{c}_i \definiere c_1(E_i)$, and $\Delta_i \definiere \Delta(E_i)$. In analogy with the definition of $a_1$, cf.~\eqref{eq:a1def}, we set
\begin{align*}
 a_2 \definiere \frac{r_4\frak{c}_3 - r_3\frak{c}_4}{r_2} = \frac{r_4\frak{c}_2}{r_2}-\frak{c}_4 = \frak{c}_3 - \frac{r_3\frak{c}_2}{r_2}
\end{align*}

 As in the first step we see that $a_2$ is $\phi_{t_2}$-primitive. Furthermore, comparing discriminants we arrive at
\begin{align*}
 \Delta_0 + \frac{1}{2r_1r_2}\,a_1^2 &= \frac{r_1}{r_0}\Delta_1 + \frac{r_2}{r_0}\Delta_2 =
  \frac{r_1}{r_0}\Delta_1 + \frac{r_2}{r_0}\cdot \bigg(- \frac{1}{2r_3r_4}\, a_2^2 + \frac{r_3}{r_2}\Delta_3 + \frac{r_4}{r_2} \Delta_4 \bigg)\\
 &= - \frac{r_2}{2r_0 r_3r_4}\, a_2^2 + \frac{r_1}{r_0}\Delta_1 + \frac{r_3}{r_0} \Delta_3 + \frac{r_4}{r_0}\Delta_4.
\end{align*}

 As above, the Hodge Index Theorem and the Bogomolov inequality now imply that $a_2$, $c_1(E_3)$, and $c_1(E_4)$ are bounded by some function  that depends only on $c_1(E)$, $c_2(E)$, $\phi_0$, and $K$.
 
Since torsion-free sheaves of rank one are semistable with respect to any polarisation, the process stops after at most $r-1$
  steps. It produces a filtration of $E$ with the property that the associated graduation has
   $\phi_0$-semistable torsion-free terms whose slopes with respect to $\phi_0$ are bounded by some constant $C = C(c_1(E), c_2(E), \phi_0, K)$ that depends only on $c_1(E)$, $c_2(E)$, $\phi_0$, and $K$.

 Finally, the inclusion $E_{\mathrm{max}}\subset E$ gives a nontrivial morphism from $E_{\mathrm{max}}$ to some term of this graduation showing that $\mu_0(E_{\mathrm{max}})\le  C$.
  \end{proof} 
  
As already noted above, Lemma~\ref{lem:boundedslope} implies Proposition~\ref{boundedness} by \cite[Prop.~3.3.7]{HL}. This concludes the proof of Proposition~\ref{boundedness}.
\end{proof}

\subsection{A chamber structure on the set of $(n-1)^{\mathrm{st}}$ powers of ample classes}\label{subsect:chamberstructure}
In the present section we will construct a chamber structure on $P(X)$ that reflects the change of the induced semistability condition, and we will investigate the basic properties of this decomposition.

\subsubsection{Constructing the chamber structure}
We first note the following fundamental relation between $\mathrm{Amp}(X)$ and $P(X)$.

\begin{Prop}[Injectivity of power maps]\label{P}
The set $P(X)$ is open in $N_1$, and the map $p_{n-1}\colon~\alpha\mapsto \alpha^{n-1}$ is a homeomorphism from $\mathrm{Amp}(X)$ to $P(X)$.
\end{Prop}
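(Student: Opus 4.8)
The plan is to treat the three assertions separately, throughout using the fact that the intersection product gives a perfect pairing $N^1 \times N_1 \to \R$, so that $N_1 = N_1(X)_{\R}$ is identified with the dual of $N^1 = N^1(X)_{\R}$; in particular both spaces have the same dimension $\rho$. The power map $p\colon \mathrm{Amp}(X) \to N_1$, $\alpha \mapsto \alpha^{n-1}$, is the restriction of a polynomial map, homogeneous of degree $n-1$, and is therefore smooth.

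First I would establish openness by analysing the differential of $p$. At a point $\alpha \in \mathrm{Amp}(X)$ one computes $D_\alpha p(v) = (n-1)\,\alpha^{n-2}\cdot v \in N_1$ for $v \in N^1$. Under the identification $N_1 \cong (N^1)^*$, this is $(n-1)$ times the linear map associated with the symmetric bilinear form $q_\alpha(v,w) \definiere \alpha^{n-2}\cdot v\cdot w$ on $N^1$. By the Hodge Index Theorem \cite[Thm.~6.32]{VoisinI}, $q_\alpha$ has signature $(1,\rho-1)$, the unique positive direction being spanned by $\alpha$ itself; in particular $q_\alpha$ is non-degenerate, so $D_\alpha p$ is injective and, by equality of dimensions, an isomorphism at every $\alpha \in \mathrm{Amp}(X)$. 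The Inverse Function Theorem then shows that $p$ is a local diffeomorphism, hence an open map, so that $P(X) = p(\mathrm{Amp}(X))$ is open in $N_1$.

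The key step is global injectivity, which I would extract from the Khovanskii--Teissier inequalities \cite{Lazarsfeld}. Suppose $\alpha^{n-1} = \beta^{n-1}$ in $N_1$ with $\alpha, \beta$ ample, and set $s_k \definiere \alpha^{n-k}\cdot\beta^{k}$ for $0 \le k \le n$; these are strictly positive. Pairing the equality against $\alpha$ and against $\beta$ yields $s_0 = s_{n-1}$ and $s_1 = s_n$. Writing $a_k \definiere \log s_k$, the log-concavity $s_k^2 \ge s_{k-1}s_{k+1}$ says that the increments $d_k \definiere a_k - a_{k-1}$ are non-increasing in $k$; the relation $s_0 = s_{n-1}$ gives $\sum_{k=1}^{n-1} d_k = 0$ and $s_1 = s_n$ gives $\sum_{k=2}^{n} d_k = 0$, whose difference forces $d_1 = d_n$, hence $d_1 = \dots = d_n$, and then $\sum_{k=1}^{n-1}d_k = 0$ forces $d_k \equiv 0$. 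Thus all $s_k$ coincide; in particular $s_0 = s_1 = s_2$. I would then finish with the signature of $q_\alpha$ rather than invoking the equality case of Khovanskii--Teissier: one computes $q_\alpha(\alpha,\alpha-\beta) = s_0 - s_1 = 0$ and $q_\alpha(\alpha-\beta,\alpha-\beta) = s_0 - 2s_1 + s_2 = 0$, so $\alpha - \beta$ lies in the $q_\alpha$-orthogonal complement $\alpha^{\perp}$, on which $q_\alpha$ is negative definite; an isotropic vector there must vanish, so $\alpha = \beta$.

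Finally I would assemble the pieces: $p\colon \mathrm{Amp}(X) \to P(X)$ is a continuous bijection (injectivity above, surjectivity by definition of $P(X)$) which is moreover an open map, hence a homeomorphism onto $P(X)$. The main obstacle is global injectivity: local injectivity is immediate from the Hodge-index computation of the differential, but upgrading it to a global statement genuinely requires the log-concavity of the mixed intersection numbers $s_k$, which is where the positivity of the ample classes enters decisively.
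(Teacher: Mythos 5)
Your argument is correct and essentially coincides with the paper's proof: both compute the differential $(n-1)\,\alpha^{n-2}\cdot(-)$ and invoke Hodge theory for its invertibility (the paper cites Hard Lefschetz, you the non-degeneracy of the form $q_\alpha$ via the Hodge Index Theorem --- the same ingredient), both use the Khovanskii--Teissier inequalities together with the identities obtained by pairing $\alpha^{n-1}=\beta^{n-1}$ against $\alpha$ and $\beta$ to force equality of all mixed products $\alpha^{n-k}\beta^{k}$, and both conclude by applying the Hodge Index Theorem to the $\alpha$-primitive class $\alpha-\beta$. Your telescoping log-concavity argument merely repackages the paper's step of multiplying all the Khovanskii--Teissier inequalities and invoking the equality case in each, so the two proofs differ only in bookkeeping.
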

\begin{proof}
We put norms $\Vert \ \Vert_k$ on the real vector spaces $H^{k,k}_{\R}(X)$. For  $1\leq k\leq n$ the continuity of the maps $p_k\colon H^{1,1}_{\R}(X)\to H^{k,k}_{\R}(X)$, $\alpha\mapsto \alpha^k$ implies the existence of constants $C_k$ such that $\Vert  \alpha^k\Vert_k\leq C_k\Vert \alpha\Vert_1^k$ holds for all $\alpha$. Furthermore, we infer that the total derivative of $p_{n-1}$ at a point $\alpha$ is the map $\beta\mapsto (n-1)\alpha^{n-2}\cdot \beta$. Thus, the restriction of $p_{n-1}$ to the ample cone is a local isomorphism by the Hard Lefschetz Theorem. Consequently, the image $P(X)$ of $p_{n-1}$ is open in $N_1$. 

Let $\alpha,\beta$ be two real ample classes such that $\alpha^{n-1}=\beta^{n-1}$ in $N_1$. Multiplication by $\alpha$ from the left and by $\beta$ from the right gives
\begin{equation}\label{eq:mult}
\alpha^n=\alpha\beta^{n-1} \quad \text{ as well as } \quad \alpha^{n-1}\beta=\beta^{n},
\end{equation}
and hence
\begin{equation}\label{eq:npower}
 \alpha^n\beta^n=(\alpha^{n-1}\beta)(\alpha\beta^{n-1}).
\end{equation}
On the other hand, the Khovanskii-Teissier inequalities \cite[Ex.~1.6.4]{Lazarsfeld} give
\begin{equation}\label{eq:Teissier}
 (\alpha^{n-j}\beta^j)(\alpha^{n-j-2}\beta^{j+2})\le (\alpha^{n-j-1}\beta^{j+1})^2 \quad \text{for } 0\le j\le n-2.
\end{equation}
Multiplying all of these inequalities, we obtain the inequality
\begin{equation}\label{uneq:npower}
 \alpha^n\beta^n\le (\alpha^{n-1}\beta)(\alpha\beta^{n-1}).
\end{equation}
Note that in our setup \eqref{eq:npower} says that equality is attained in \eqref{uneq:npower}. Thus, equality must hold in each of the Khovanskii-Teissier inequalities \eqref{eq:Teissier} above. Together with the equalities \eqref{eq:mult}, this immediately implies that all the mixed intersection products $\alpha^{n-j}\beta^j$, $0\le j\le n$, are equal. It follows that $$(\alpha-\beta)\alpha^{n-1}=0;$$ i.e., $\alpha-\beta$ is primitive with respect to the polarisation $\alpha$. 
By the Hodge Index Theorem, the quadratic form $q(\gamma)\definiere \gamma^2\alpha^{n-2}$ is definite on the primitive part of $H^{1,1}_{\R}(X)$. Therefore, setting $\gamma=\alpha-\beta$ and invoking again the equality of mixed intersection products, we conclude that $\alpha=\beta$.
\end{proof}
\begin{rem}
 A differential-geometric argument proving an analogous result for compact K\"ahler manifolds was given in \cite{FuXiao}.
\end{rem}

As in the $2$-dimensional case, we obtain a locally finite linear rational chamber decomposition, this time however not on the ample cone, but on $P(X)$.
 \begin{thm}[Chamber structure on $P(X)$]\label{thm:chambers}
 For any set of topological invariants $(r, c_1, ..., c_n)$ of torsion-free sheaves on $X$ and for any compact subset $K \subset P(X)$, there exist finitely many linear rational walls defining a chamber structure with the following property: if two elements $\alpha$ and $\beta$ in $K$ belong to the same chamber then for any torsion-free coherent sheaf $F$ with the given topological invariants, $F$ is $\alpha$-(semi-)stable if and only if $F$ is $\beta$-(semi-)stable.
\end{thm}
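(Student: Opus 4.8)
The plan is to attach to every potential destabilization a rational linear functional on $N_1$, to cut out finitely many walls from these using the discriminant estimate already isolated in the proof of Proposition~\ref{boundedness}, and finally to transport (semi)stability across a chamber along a wall-avoiding path.

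First I would record the linearity. For a torsion-free sheaf $E$ with the prescribed invariants $(r,c_1,\ldots,c_n)$ and a saturated subsheaf $F\subset E$ of rank $r'$ with $0<r'<r$, set $\xi_F\definiere r\,c_1(F)-r'\,c_1\in NS(X)$; this is an integral class, and $\mu_\alpha(F)-\mu_\alpha(E)=\tfrac{1}{r'r}\,\xi_F\cdot\alpha$ for every $\alpha\in P(X)$. Thus the equal-slope locus is the rational hyperplane section $W^{\xi_F}\definiere\{\alpha\in P(X)\mid \xi_F\cdot\alpha=0\}$ -- a \emph{linear rational wall}, since $P(X)$ is open in $N_1$ by Proposition~\ref{P} -- and whether $E$ is $\alpha$-(semi)stable depends only on the signs of the numbers $\xi_F\cdot\alpha$.

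The heart of the proof is finiteness. Via the homeomorphism $\mathrm{Amp}(X)\cong P(X)$ of Proposition~\ref{P}, let $K'\subset\mathrm{Amp}(X)$ be the compact preimage of $K$ and let $\hat K'$ be a compact convex set with $K'\subset\mathrm{int}\,\hat K'\subset\hat K'\subset\mathrm{Amp}(X)$; write $\hat K\subset P(X)$ for its image. I define $\Xi$ to be the set of classes $\xi_F$ arising from pairs $(E,F)$ with $E$ an $\alpha$-semistable sheaf of the given type for some $\alpha=H^{n-1}\in\hat K$ and $F$ a saturated subsheaf satisfying $\mu_\alpha(F)=\mu_\alpha(E)$. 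For any such pair both $F$ and $E/F$ are automatically $\alpha$-semistable, so -- writing $a_1=\xi_F/r$ -- the discriminant identity \eqref{eq:alternativediscriminant}, the Bogomolov inequalities \eqref{eq:Bogomolov}, and the Hodge-index positivity \eqref{eq:lowerbound} combine exactly as in the proof of Proposition~\ref{boundedness} to give $0\le-\xi_F^2\cdot H^{n-2}\le C$ with $C=C(r,c_1,c_2,\hat K')$. Since $\xi_F\cdot H^{n-1}=0$, the class $\xi_F$ is $H$-primitive; the Hodge Index Theorem makes $-(\cdot)^2\cdot H^{n-2}$ the square of a norm on the $H$-primitive subspace, and a compactness argument over $H\in\hat K'$ bounds $\xi_F$ in a fixed norm on $NS(X)_\mathbb{R}$. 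As $\Xi$ lies in the lattice $NS(X)$, it is finite.

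Finally I would prove constancy. By construction $\Xi$ contains \emph{every} class $\xi_F$ for which some $E$ of the given type is $\alpha$-semistable with a saturated equal-slope subsheaf $F$ at a polarisation $\alpha\in\hat K$; there are no spurious walls. Declare the chambers to be the connected components of $(\mathrm{int}\,\hat K)\setminus\bigcup_{\xi\in\Xi}W^\xi$; these are open, hence path connected, and $K\subset\mathrm{int}\,\hat K$. Given $\alpha,\beta$ in one chamber, choose a path $\alpha_\tau$ in that chamber joining them, necessarily meeting no $W^\xi$ with $\xi\in\Xi$. Were $E$ $\alpha$-semistable but not $\beta$-semistable, the closed locus $\{\tau\mid E\text{ is }\alpha_\tau\text{-semistable}\}$ would possess a boundary point $\tau_0$, at which $E$ is properly semistable by the argument of Lemma~\ref{properlyss} (the requisite limiting destabilizer being supplied by the compactness of Lemma~\ref{compacity}); passing to the saturation of the resulting equal-slope subsheaf yields $\xi_F\in\Xi$ with $\alpha_{\tau_0}\in W^{\xi_F}$, contradicting the choice of path. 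The identical reasoning handles the stable case. The main obstacle is exactly this finiteness step: one must make the discriminant bound uniform over $K$ and control how the primitivity condition $\xi\cdot H^{n-1}=0$ varies with $H$, which is precisely what the convex enlargement $\hat K'$ and the Hodge-index compactness argument accomplish.
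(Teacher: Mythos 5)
Your proposal is correct and takes essentially the same route as the paper: the paper's proof likewise reduces to a compact convex neighbourhood in $\mathrm{Amp}(X)$, identifies the walls as the hyperplanes $a_1^\perp$ (your $\xi_F = r\,a_1$, via equation~\eqref{eq:aisinkernel}) orthogonal to the integral classes produced in the proof of Lemma~\ref{lem:boundedslope}, and obtains finiteness from exactly the combination of the discriminant identity \eqref{eq:alternativediscriminant}, the Bogomolov inequalities \eqref{eq:Bogomolov}, and Hodge-index definiteness on the primitive part, uniformly over the compact set. Your write-up only makes explicit what the paper leaves implicit (the uniform compactness argument over $\hat K'$ and the wall-avoiding path transport via Lemmas~\ref{compacity} and \ref{properlyss}), so there is no substantive difference in method.
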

\begin{proof} It suffices to consider the following situation: let $\phi_0$ be any real ample class in $N^1$, let $\hat K$ be a convex compact neighbourhood of $\phi_0$ in $\mathrm{Amp}(X)$, and let $K$ be its image under $p_{n-1}$ in $P(X)$. Using the notation introduced in the proof of Lemma~\ref{lem:boundedslope}, it follows from the arguments given there that change of semistability within $K$ occurs only at hyperplanes of the form $a_1^\perp \cap K$, see \eqref{eq:aisinkernel}. We have also seen loc.~cit.~that there are only finitely many such hyperplanes, once the discrete invariants of the sheaves and the compact set $K$ are fixed.
\end{proof}

\subsubsection{Explaining the pathologies found by Schmitt and Qin}\label{sect:Qin_and_Schmitt}
By Proposition \ref{P} our chamber structure on $P(X)$ pulls back to a locally finite chamber structure on $\Amp(X)$. The corresponding walls 
 thus obtained in $\Amp(X)$ are given by equations that are homogeneous of degree $n-1$, so, except in the case when $\rho(X)\definiere \dim N^1(X)\leq 2$, these need not be linear. This explains the pathologies encountered in the approaches of Schmitt and Qin. 

More precisely, on the one hand Schmitt \cite{Sch00} considers segments connecting rational points in $\Amp(X)$ as well as points on these segments where the induced notion of slope-stability changes. These separating points are precisely the intersection points of his segments with our walls. This clarifies the appearance of non-rational points as for example in \cite[Ex.~1.1.5]{Sch00}. On the other hand, these intersection points are also contained in the linear walls considered by Qin in \cite{Qin93}. This in turn explains the pathologies of Qin's linear chamber structure on $\Amp(X)$, and in particular the fact that it cannot be locally finite in general.  

\subsubsection{Representing chambers by complete intersection curves}
The system of walls given by Theorem~\ref{thm:chambers} yields an obvious stratification of $P(X)$ into connected chambers. We show next that every such chamber, even if it is not of the maximal dimension $\rho(X)$, contains a class which is an intersection of integral ample divisor classes, cf.~the discussion in Section~\ref{subsect:motivation}. This is one of the main motivations for the construction and investigation of a moduli space for families of sheaves that are slope-semistable with respect such a complete intersection class, as carried out in Sections~\ref{section:semiampleness} to \ref{sect:geometry} of this paper.
 \begin{Prop}[Representing chambers by complete intersection curves]\label{prop:a_cic_in_every_chamber}
 Let  $X$ be a projective manifold of dimension $n>2$ and fix some chamber $\mathcal{C} \subset P(X)$ of stability polarisations in $P(X)$. Then, there exist some ample integral classes $A$ and $B$ such that the complete intersection class 
 $A^{n-2}B$ lies in $\mathcal{C}$.
\end{Prop}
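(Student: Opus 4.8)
The plan is to fix one point of the chamber and, holding one of the two ample classes fixed, to \emph{linearise} the condition ``$A^{n-2}B\in\mathcal{C}$'' by means of the Hard Lefschetz isomorphism, after which everything reduces to finding a rational ample point in a rational linear subspace and clearing denominators. First I would record the local shape of $\mathcal{C}$. By Theorem~\ref{thm:chambers} the walls are cut out by rational divisor classes (the classes $a_i\in N^1(X)$ occurring in \eqref{eq:aisinkernel}), so by local finiteness only finitely many walls are relevant near a chosen point $\gamma_0\in\mathcal{C}$, and there $\mathcal{C}$ is the intersection of the rational linear subspace $L:=\bigcap_{i\in I}a_i^{\perp}\subset N_1$ with finitely many open half-spaces and with $P(X)$. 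Since all walls pass through the origin, $\mathcal{C}$ is a cone, and since $P(X)$ is open in $N_1$ by Proposition~\ref{P}, the chamber $\mathcal{C}$ is a relatively open, positively-scaling-stable subset of $L$. Using the homeomorphism of Proposition~\ref{P} I write $\gamma_0=H_0^{\,n-1}$ with $H_0\in\mathrm{Amp}(X)$.

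Next, for a fixed ample class $A$ I would study the linear map $\ell_A\colon N^1(X)\to N_1(X)$, $B\mapsto A^{n-2}B$. By the Hard Lefschetz Theorem \cite[Thm.~6.25]{VoisinI} this map is an isomorphism (this is where $n>2$ makes $A^{n-2}$ a genuine Lefschetz power; for $n=2$ one is in the classical surface case, where $A^{n-2}B=B$). When $A$ is integral and the $a_i$ are rational, the preimage $\ell_A^{-1}(L)=\{B : a_i\cdot A^{n-2}\cdot B=0\text{ for all }i\in I\}$ is again a rational linear subspace of $N^1(X)$, and $\ell_A^{-1}(\mathcal{C})$ is a relatively open, scaling-stable subset of it. Because $\ell_{tA}=t^{n-2}\ell_A$ and $\mathcal{C}$ is a cone, both $\ell_A^{-1}(L)$ and $\ell_A^{-1}(\mathcal{C})$ depend only on the ray through $A$. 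The task is thereby reduced to producing an integral ample class $B\in\ell_A^{-1}(\mathcal{C})$, for then $A^{n-2}B\in\mathcal{C}$, as desired.

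Finally I would position $A$ and invoke rationality. I choose $A$ to be an integral ample class whose direction is close to that of $H_0$; this is possible since rational, and hence (after scaling) integral, ample directions are dense in $\mathrm{Amp}(X)$. Set $B_A:=\ell_A^{-1}(\gamma_0)$, the unique class with $A^{n-2}B_A=\gamma_0\in\mathcal{C}$, so that $B_A\in\ell_A^{-1}(\mathcal{C})$. As the direction of $A$ tends to that of $H_0$, continuity of inversion gives $B_A\to\ell_{H_0}^{-1}(\gamma_0)=H_0$, so $B_A$ is ample once $A$ is close enough. Then $\ell_A^{-1}(\mathcal{C})\cap\mathrm{Amp}(X)$ is a nonempty relatively open subset of the rational subspace $\ell_A^{-1}(L)$; density of rational points in a rational subspace yields a rational ample class $B_0$ in it, and clearing denominators gives an integral ample $B=mB_0$. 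Scaling-stability of $\mathcal{C}$ then gives $A^{n-2}B=m\,A^{n-2}B_0\in\mathcal{C}$ with $A,B$ integral ample, which is exactly the assertion.

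The step I expect to be the main obstacle is precisely this last coordination: for a class $A$ constrained to lie in the lattice I must simultaneously keep $\ell_A^{-1}(L)$ meeting the ample cone, respect the correct wall-signs together with the membership $\ell_A(B)\in P(X)$, and still be free to take $B$ integral. What makes these requirements compatible is that every cone in sight --- namely $\mathcal{C}$, $P(X)$, $L$, and their $\ell_A$-preimages --- is stable under positive scaling and depends only on the direction of $A$; approximating the direction of $H_0$ by an integral ample $A$ and afterwards rescaling $B$ decouples the ``direction'' constraints from the ``integrality'' constraint, and this decoupling is the heart of the argument.
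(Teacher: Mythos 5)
Your proposal is correct and follows essentially the same route as the paper's proof: both linearise the problem via the Hard Lefschetz isomorphism $B\mapsto A^{n-2}B$, use continuity of its inverse to keep $B$ ample once a rational (integral) $A$ is chosen close to $H_0$, and conclude using the rationality of the walls together with the scale-invariance of the chamber. The only cosmetic difference is the side on which rationality is harvested: the paper first picks a rational class $C\in\mathcal{C}$ near $H_0^{\,n-1}$ (possible since the chamber is relatively open in a rational linear subspace) and then deduces that $B=L_A^{-1}(C)$ is rational from the rationality of the intersection numbers $BA^{n-2}D=CD$ for rational $D$, whereas you pull the chamber back and choose a rational $B_0$ directly in the rational subspace $\ell_A^{-1}(L)$ --- these are equivalent because $\ell_A$ is defined over $\Q$ for rational $A$.
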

\begin{proof} 
For any $H\in\mathrm{Amp}(X)$, the $\R$-linear map $L_H\in L(N^1,N_1)$ given by $L_H(D)\definiere DH^{n-2}$ is invertible by Hard Lefschetz.
As the map 
$\mathrm{Amp}(X)\to L(N_1,N^1)$, $H\mapsto L_H^{-1}$ is continuous, the same also holds for the map
$$e\colon \mathrm{Amp}(X)\times N_1\to N^1, \ \ (H, C)\mapsto L_H^{-1}(C).$$
Take $H\in\mathrm{Amp}(X)$ such that $H^{n-1}$ lies in the fixed chamber $\mathcal{C} \subset P(X)$.
We have 
$e(H,H^{n-1})=H$. Since by Theorem~\ref{thm:chambers} the chambers are cut out by rational walls, close to 
$H^{n-1}$ there exists a rational element $C$ in the same chamber as $H^{n-1}$. Furthermore, choose a rational ample class $A\in\mathrm{Amp}(X)$ close to $H$.
Then, $B\definiere e(A,C)$ is close to $H$, and hence in particular, $B$ is in $\mathrm{Amp}(X)$. By construction, we have
\begin{equation}\label{eq:equivalentcompleteintersection}
 C=BA^{n-2}.
\end{equation}
 Since $C$ and $A$ are rational, we infer that the intersection numbers $BA^{n-2}D= CD$
are rational whenever $D\in N^1$ is rational. But the elements $A^{n-2}D$ span $N_1(X)_{\Q}$ as $D$ runs through $N^1(X)_{\Q}$. Hence, $B$ is a rational element in $\mathrm{Amp}(X)$. Together with equation~\eqref{eq:equivalentcompleteintersection} and with the observation that taking positive real scalar multiples in $\mathrm{Amp}(X)$ or $P(X)$ does not change the induced notion of slope-(semi)stability, this implies the claim.
\end{proof}
\vspace{0.5mm}
\def\cprime{$'$} \def\polhk#1{\setbox0=\hbox{#1}{\ooalign{\hidewidth
  \lower1.5ex\hbox{`}\hidewidth\crcr\unhbox0}}}
  \def\polhk#1{\setbox0=\hbox{#1}{\ooalign{\hidewidth
  \lower1.5ex\hbox{`}\hidewidth\crcr\unhbox0}}}
\providecommand{\bysame}{\leavevmode\hbox to3em{\hrulefill}\thinspace}
\providecommand{\MR}{\relax\ifhmode\unskip\space\fi MR }
% \MRhref is called by the amsart/book/proc definition of \MR.
\providecommand{\MRhref}[2]{%
  \href{http://www.ams.org/mathscinet-getitem?mr=#1}{#2}
}
\providecommand{\href}[2]{#2}

\vspace{1cm}

\end{document}